\newtheorem{theorem}{Theorem}
\newtheorem{corollary}[theorem]{Corollary}
\newtheorem{lemma}[theorem]{Lemma}
\newtheorem{proposition}[theorem]{Proposition}
\newtheorem{remark}{Remark}
\DeclareMathOperator*{\argmin}{arg\,min}
\DeclareMathOperator*{\argmax}{arg\,max}
\DeclareMathOperator*{\dom}{dom}
\begin{document}

\def\myparagraph#1{\vspace{2pt}\noindent{\bf #1~~}}



\newcommand{\eqdef}{{\stackrel{\mbox{\tiny \tt ~def~}}{=}}}

\def\calQ{{\cal Q}}
\def\calW{{\cal W}}
\def\A{{\mathbb A}}

\def\DeltaCeil{{\lceil\Delta\rceil}}
\def\TwoDeltaCeil{{\lceil 2\Delta\rceil}}
\def\OnePointFiveDeltaCeil{{\lceil 3\Delta/2\rceil}}

\long\def\ignore#1{}
\def\myps[#1]#2{\includegraphics[#1]{#2}}
\def\etal{{\em et al.}}
\def\Bar#1{{\bar #1}}
\def\br(#1,#2){{\langle #1,#2 \rangle}}
\def\setZ[#1,#2]{{[ #1 .. #2 ]}}
\def\Pr{\mbox{\rm Pr}}
\def\REACHED{\mbox{\tt REACHED}}
\def\AdjustFlow{\mbox{\tt AdjustFlow}}
\def\GetNeighbors{\mbox{\tt GetNeighbors}}
\def\true{\mbox{\tt true}}
\def\false{\mbox{\tt false}}
\def\Process{\mbox{\tt Process}}
\def\ProcessLeft{\mbox{\tt ProcessLeft}}
\def\ProcessRight{\mbox{\tt ProcessRight}}
\def\Add{\mbox{\tt Add}}

\def\setof#1{{\left\{#1\right\}}}
\def\suchthat#1#2{\setof{\,#1\mid#2\,}} 
\def\event#1{\setof{#1}}
\def\q={\quad=\quad}
\def\qq={\qquad=\qquad}
\def\calA{{\cal A}}
\def\calB{{\cal B}}
\def\calC{{\cal C}}
\def\calD{{\cal D}}
\def\calE{{\cal E}}
\def\calF{{F}}
\def\calG{{\cal G}}
\def\calI{{\cal I}}
\def\calJ{{\cal J}}
\def\calH{{\cal H}}
\def\calL{{\cal L}}
\def\calN{{\cal N}}
\def\calP{{\cal P}}
\def\calR{{\cal R}}
\def\calS{{\cal S}}
\def\calT{{\cal T}}
\def\calU{{\cal U}}
\def\calV{{\cal V}}
\def\calO{{\cal O}}
\def\calX{{\cal X}}
\def\calY{{\cal Y}}
\def\s{\footnotesize}
\def\calNG{{\cal N_G}}
\def\psfile[#1]#2{}
\def\psfilehere[#1]#2{}
\def\epsfw#1#2{\includegraphics[width=#1\hsize]{#2}}
\def\assign(#1,#2){\langle#1,#2\rangle}
\def\edge(#1,#2){(#1,#2)}
\def\VS{\calV^s}
\def\VT{\calV^t}
\def\slack(#1){\texttt{slack}({#1})}
\def\barslack(#1){\overline{\texttt{slack}}({#1})}
\def\NULL{\texttt{NULL}}
\def\PARENT{\texttt{PARENT}}
\def\GRANDPARENT{\texttt{GRANDPARENT}}
\def\TAIL{\texttt{TAIL}}
\def\HEADORIG{\texttt{HEAD$\_\:$ORIG}}
\def\TAILORIG{\texttt{TAIL$\_\:$ORIG}}
\def\HEAD{\texttt{HEAD}}
\def\CURRENTEDGE{\texttt{CURRENT$\!\_\:$EDGE}}

\def\unitvec(#1){{{\bf u}_{#1}}}
\def\uvec{{\bf u}}
\def\vvec{{\bf v}}
\def\Nvec{{\bf N}}

\newcommand{\bg}{\mbox{$\bf g$}}
\newcommand{\bh}{\mbox{$\bf h$}}

\newcommand{\bx}{\mbox{$x$}}
\newcommand{\by}{\mbox{\boldmath $y$}}
\newcommand{\bz}{\mbox{\boldmath $z$}}
\newcommand{\bu}{\mbox{\boldmath $u$}}
\newcommand{\bv}{\mbox{\boldmath $v$}}
\newcommand{\bw}{\mbox{\boldmath $w$}}
\newcommand{\bvarphi}{\mbox{\boldmath $\varphi$}}

\newcommand\myqed{{}}

\newcommand{\scp}[3][]{\langle{#2},\, {#3}\rangle_{#1}}
\newcommand{\prox}{{\tt prox}}
\newcommand{\proj}{{\tt proj}}
\newcommand{\lmo}{{\tt lmo}}
\newcommand{\norm}[2][]{\|{#2}\|_{#1}}

\graphicspath{{./code/figures/}}


\title{\Large\bf  \vspace{0pt} One-sided Frank-Wolfe algorithms for saddle problems}
\author{Vladimir Kolmogorov

  \footnote{Vladimir Kolmogorov was supported by the European Research Council
    under the European Unions Seventh Framework Programme
    (FP7/2007-2013)/ERC grant agreement no 616160.}

  \\ \normalsize Institute of Science and Technology Austria
  \\ {\normalsize\tt vnk@ist.ac.at}
  \and
  Thomas Pock
  \footnote{Thomas Pock acknowledges support by an ERC grant HOMOVIS,
    no 640156.}  \\ \normalsize Institute of Computer Graphics and
  Vision, \\Graz University of Technology\\
  {\normalsize\tt pock@icg.tugraz.at}}

\date{}
\maketitle

\begin{abstract}
  We study a class of convex-concave saddle-point problems of the form
  $\min_x\max_y \langle Kx,y\rangle+f_\calP(x)-h^\ast(y)$ where $K$ is
  a linear operator, $f_\calP$ is the sum of a convex function $f$
  with a Lipschitz-continuous gradient and the indicator function of a
  bounded convex polytope $\calP$, and $h^\ast$ is a convex (possibly
  nonsmooth) function. Such problem arises, for example, as a
  Lagrangian relaxation of various discrete optimization problems. Our
  main assumptions are the existence of an efficient {\em linear
    minimization oracle} ($\lmo$) for $f_\calP$ and an efficient {\em
    proximal map} ($\prox$) for $h^*$ which motivate the solution via
  a blend of proximal primal-dual algorithms and Frank-Wolfe
  algorithms. In case $h^*$ is the indicator function of a linear
  constraint and function $f$ is quadratic, we show a $O(1/n^2)$
  convergence rate on the dual objective, requiring $O(n \log n)$
  calls of $\lmo$. If the problem comes from the constrained
  optimization problem $\min_{x\in\mathbb
    R^d}\{f_\calP(x)\:|\:Ax-b=0\}$ then we additionally get bound
  $O(1/n^2)$ both on the primal gap and on the infeasibility gap.  In
  the most general case, we show a $O(1/n)$ convergence rate of the
  primal-dual gap again requiring $O(n\log n)$ calls of $\lmo$. To the
  best of our knowledge, this improves on the known convergence rates
  for the considered class of saddle-point problems.  We show
  applications to labeling problems frequently appearing in machine
  learning and computer vision.
\end{abstract}

\section{Introduction}\label{sec:intro}

In this paper, we consider the following class of saddle-point problems:
\begin{equation}
\min_{x \in \calX} \max_{y\in \calY} \calL(x,y) := \scp{Kx}{y} + f_\calP(x) - h^*(y)
\label{eq:saddle}
\end{equation}
where $\calX,\calY$ are finite dimensional spaces, equipped with an
inner product $\scp{\cdot}{\cdot}$ and $K: \calX \to \calY$ is a
bounded linear operator with operator norm $L_K = \norm{K}$. Usually
the underlying spaces are the standard Euclidean spaces $\calX =
\mathbb R^n$ and $\calY=\mathbb R^m$.

The functions $f_\calP(x)$ and $h^*$ are convex, lower semicontinuous
functions.

For a differentiable convex function $f$ we say that it has a
Lipschitz continuous gradient if there exists a constant $L_f \geq 0$
such that
\[
\norm{\nabla f(x) - \nabla f(y)} \leq L_f \norm{x-y}, \quad \forall
x,y \in \calX.
\]
Moreover, the function $f$ is called strongly convex with strong convexity
parameter $\mu_f > 0$ if 
\[
f(y) \geq f(x) + \scp{\nabla f(x)}{y-x} + \frac{\mu_f}{2}
\norm{y-x}^2, \quad \forall x,y \in \calX.
\]

We make the following important structural assumptions on the
functions $f_\calP(x)$ and $h^*$:
\begin{itemize}
\item The function $f_\calP(x)$ has the following composite form
  \[
  f_\calP(x) = f(x) + \delta_{\calP}(x),
  \]
  where $f$ is a convex function with a $L_f$-Lipschitz continuous
  gradient and $\delta_{\calP}$ is the indicator function of a convex
  polytope $\calP \subset \calX$. For this polytope, we assume the
  existence of an efficient \textbf{linear minimization oracle} ($\lmo$), which means
  that for any $a \in \calX^*$, one can efficiently solve
  \[
  \lmo_{\calP}(a) \in \arg\min_{x\in \calP} \scp{a}{x}.
  \]
  This is for example the case if $\calP$ is the polytope arising from
  LP relaxations of MAP-MRF problems in a tree-structured graph, where the above problem can
  be solved efficiently using dynamic programming.
\item The function $h^*$ is a convex function which allows to
  efficiently compute its \textbf{proximal map} ($\prox$), which for
  any $\bar y \in \calY$ and $\tau > 0$ is defined as
  \[
  \prox_{\tau h^*}(\bar y) = \arg\min_{y\in \calY} \frac1{2\tau}\norm{y-\bar y}^2 + h^*(y).
  \]
  Important examples of $h^*$ which allow for an efficient proximal
  map include quadratic functions and various norms. If $h^*=\delta_C$
  i.e. the indicator functions of some convex set $C$ the proximal map
  reduces the orthogonal projection operator.
\end{itemize}


An important special case of problem~\eqref{eq:saddle} is given by 
\begin{equation}
\min_{x \in \mathbb R^d} \max_{y\in \mathbb R^d} \calL(x,y) := f_\calP(x) + \scp{y}{Ax-b}
\label{eq:saddle:Axb}
\end{equation}
where $A$ is a matrix and $b$ is a vector of appropriate dimensions.
This corresponds to the problem of minimizing $f_\calP(x)$ subject to the linear constraint $Ax-b=0$.

\myparagraph{Primal and dual problems}
We denote by $(x^\star,y^\star)$ a saddle point of
problem~\eqref{eq:saddle}, which satisfies
\[
\calL(x^\star,y) \leq \calL(x^\star,y^\star) \leq \calL(x,y^\star),
\quad \forall (x,y) \in \calX \times \calY.
\]
Throughout the paper we denote the primal and dual problems respectively by
\[
\calF(x) = \max_{y\in\calY} \calL(x,y), \quad H(y) = \min_{x\in\calX} \calL(x,y).
\]
We assume that strong duality holds, that is
\[
H(y^\star)=\max_{y\in\calY} H(y)=\max_{y\in\calY}\min_{x\in\calX}
    \calL(x,y)
    =\min_{x\in\calX} \calF(x)=\calF(x^\star)
\]
Some of the results will also assume coercivity of a function. We
recall that a proper function $\phi(z)$ is called coercive if
\[
\lim_{\norm{z} \to \infty} \phi(z) = \infty.
\]

\myparagraph{Contributions}
The algorithms we propose here are based on inexact proximal
algorithms, which allow for an approximate evaluation of the proximal
maps. For this we make use of efficient variants of the Frank-Wolfe
algorithm that offer a linear convergence rate on the proximal
subproblems. In summary, after $O(n \log n)$ calls to $\lmo$ we achieve the following guarantees.
\begin{itemize}
\item If function $f$ is linear or quadratic, $h^*$ is the indicator function of a linear
  constraint, and function $-H(y)$ is coercive then
  we obtain accuracy $O(1/n^2)$  on the dual objective $H$. 
If in addition the problem has the form of eq.~\eqref{eq:saddle:Axb} 
then we obtain accuracy $O(1/n^2)$  both on the primal gap $f_\calP(x)-f_\calP(x^\star)$ and
 on the infeasibility gap $||Ax-b||$.

\item In the most general case,  we obtain accuracy $O(1/n)$  on the dual objective (and also on the primal objective assuming that $\dom h^\ast\subseteq\calY$ is compact).
\end{itemize}

To the
best of our knowledge these rates improve on the so far known rates
for the class of saddle-point problems considered in this paper.
In particular, for the problem in eq.~\eqref{eq:saddle:Axb} previous works 
(described later in Sec.~\ref{sec:related}) after $n$ calls to $\lmo$ obtained  
accuracy $O(1/n)$ on the dual objective and accuracy $O(1/\sqrt{n})$ on both $f_\calP(x)-f_\calP(x^\star)$ and $||Ax-b||$.

\subsection{Motivating example}\label{sec:example}

An important application, which also serves as the main motivation for
the class of saddle-point problems studied in this paper, is given by
the Lagrangian relaxation of discrete optimization problems.
To form such relaxation,
one needs to first 
encode discrete variables via Boolean indicator variables $X\in\{0,1\}^d$,
and then express a difficult optimization problem as a sum of tractable subproblems:
\begin{equation}\label{eq:discrete}
\min_{X\in\{0,1\}^d} 
\;\;\sum_{t\in T}f_t(X_{A_t})
\end{equation}
Here $T=\{1,\ldots,m\}$ is the set of terms where each term $t$ is
specified by a subset of variables $A_t\subseteq[d]$
and a function $f_t:\{0,1\}^{A_t}\rightarrow\mathbb R\cup\{+\infty\}$ of $|A_t|$ variables.
Vector $X_{A_t}$ is the restriction of vector $X\in\mathbb R^d$ to $A_t$.
The arity $|A_t|$ of function $f_t$ can be arbitrarily large, however 
we assume the existence of an efficient {\em min-oracle} that for a given vector $Y\in\mathbb R^{A_t}$ computes 
$X\in\argmin\nolimits_{X\in \{0,1\}^{A_t}} \left[f_t(X)+\langle X,Y\rangle\right]$
together with the cost $f_t(X)$. 
For example, this holds if $f_t(\cdot)$ corresponds to a MAP-MRF inference problem in a tree-structured graph.

It is now easy to define a relaxation of~\eqref{eq:discrete} that will have the form of eq.~\eqref{eq:saddle} (see e.g.~\cite{Swoboda:CVPR19}).
First, for each $t\in T$ let us define polytope $\calP^t={\tt conv}(\{[X\;f(X)]\:|\:X\in\dom f_t\})\subseteq \mathbb R^{A_t}\times R$
where $\dom f_t=\{X\in\{0,1\}^{A_t}\:|\:f_t(X)<+\infty\}$ is the effective domain of $f_t$.
Problem~\eqref{eq:discrete} can then be equivalently written as
\begin{equation}\label{eq:discrete2}
\min_{X\in\{0,1\}^d,X^1\in\calP^{1},\ldots,X^m\in\calP^{m}} \;\;\sum_{t\in T} X^t_\circ\qquad\quad \mbox{s.t.}\quad X_v=X^t_v\qquad \forall t\in T,v\in A_t
\end{equation}
where $X^t_\circ$ denotes the last component of vector $X^t\in \mathbb R^{A_t}\times R$.
%
%
%
%
%
By dropping constraint $X\in\{0,1\}^d$, dualizing equalities $X^t_v-X_v=0$ with a Lagrange multiplier $Y_v^t$, and
eliminating variables $X$,
we obtain the following saddle point problem which is a relaxation of~\eqref{eq:discrete2} (its optimal value is a lower bound on~\eqref{eq:discrete2}):
\begin{equation}\label{eq:discrete3}
\min_{x=(X^1,\ldots,X^m)\in \calP^{1}\times\ldots\times\calP^{m}} \;\;\;\max_{y=(Y^t_v)_{t\in T,v\in A_t}\in\calY}\;\;\sum_{t\in T} X^t_\circ+\sum_{t\in T,v\in A_t} X^t_v Y^t_v 
\end{equation}
%
%
%
where $\calY$ is the set of vectors $y=(Y^t_v)_{t\in T,v\in A_t}$ satisfying constraints $\sum_{t:v\in A_t}Y^t_v=0$ for all $v\in[d]$.
This problem has the form of eq.~\eqref{eq:saddle} 
where function $f(x)$ is linear and $h^\ast(y)$ is the indicator function of a linear constraint on $y$.

As an example, the MAP-MRF inference problem on an undirected graph
 can be cast in the framework above by decomposing the graph into tree-structured subproblems.
It is well-known that the Lagrangian relaxation is equivalent to a standard
LP relaxation, aka the local polytope
relaxation~\cite{DualDecompositionKomodakis,Savchynskyy:book}.
MAP-MRF problems find numerous applications in machine learning and computer
vision~\cite{blake2011markov}.
In more recent
work, they also appears as the final inference layer in deep
convolutional neural networks~\cite{knobelreiter2020belief}.

\subsection{Related work}\label{sec:related}
Saddle point problems in the form of~\eqref{eq:saddle} can be solved
by a large number of proximal primal-dual algorithms (see for example
the recent work~\cite{condat2019proximal} for a very comprehensive
overview) as soon as the proximal maps for both the primal and dual
functions can be solved efficiently. On the other hand, Gidel
\etal~proposed in~\cite{gidel2017frank} an extension of the
Frank-Wolfe algorithm to saddle-point problems
$\min_{x\in\calX}\max_{y\in\calY}\calL(x,y)$ by assuming the existence
of an efficient linear minimization oracle for the product space
$\calX\times\calY$ (which is assumed to be a compact set). In this
paper, we are assuming the existence of an efficient linear
minimization oracle just on the primal and an efficient proximal map
on the dual. Therefore, our algorithms somewhat stand between the two
aforementioned techniques.

Our proposed algorithms rely on the inexact accelerated proximal
gradient algorithm of Aujol and Dossal~\cite{AujolDossal:15} and the
inexact primal-dual algorithm of Rasch and Chambolle~\cite[Section
  3.1]{RaschChambolle:20}. Note that the first method only generates a
dual sequence $\{y_n\}$.  We extend the method and the analysis to
also generate a primal sequence $\{x_n\}$, which is needed to solve
the saddle problem~\eqref{eq:saddle}.

Several authors studied a special case of~\eqref{eq:saddle} given
in~\eqref{eq:saddle:Axb}, or equivalently the problem of minimizing
function $f_\calP(x)=f(x)+\delta_\calP(x)$ subject to linear
constraints $Ax=b$~\cite{Gidel:AISTATS18,Liu:19,Yurtsever:ICML18}.
(The last paper actually considered a more general class of saddle
problems). Papers~\cite{Liu:19,Yurtsever:ICML18} achieve an accuracy
of $O(n^{-1/2})$ after $n$ iterations on the primal and infeasibility
gaps, where \cite{Yurtsever:ICML18} uses one $\lmo$ call per iteration
and~\cite{Liu:19} uses $O(k^2)$ $\lmo$ calls at $k$-th iteration
assuming that a standard Frank-Wolfe method is employed. 
Note, the papers above do not give bounds on suboptimality gaps of the
dual function $H$; instead,~\cite{Gidel:AISTATS18,Liu:19} bound
residuals of the {\em augmented} Lagrangian, which is not directly
related to the residuals of the Lagrangian in
eq.~\eqref{eq:saddle:Axb}.  A similar but slightly more general class
of composite optimization problems was also recently considered
in~\cite{silveti2020}.  In a setting similar to our paper ($\nabla f$
is Lipschitz continuous) they show $O(n^{-1/3})$ accuracy on
Lagrangian values after $n$ calls to $\lmo$.

Frank-Wolfe algorithms for saddle-point problems have also been used in~\cite{Argyriou:14,LanZhou:16}.
The former paper achieved a $O(n^{-1/2})$ convergence rate on a rather general class of constrained optimization problems.
The paper~\cite{LanZhou:16} shares some high-level similarities with our approach
(such as solving smoothed primal subproblem to a given accuracy with Frank-Wolfe),
but uses a different smoothing strategy that requires both primal and dual domains to be compact.
This assumption rules out many interesting applications, including the one considered in Section~\ref{sec:example}.

There is a large body of literature on the special case of problem~\eqref{eq:saddle}
corresponding to Lagrangian relaxation of discrete optimization problems, see e.g.~\cite{
StorvikDahlLagrangeanBasedMAP, 
SchlesingerSubgradient, 
LagrangeanRelaxationJohnsonMalioutov, 
RavikumarProximalMethodsMAPMRF, 
AcceleratedMAPJojic, 
savchynskyy2011study, 
schmidt2011evaluationProximalMAP, 
DualDecompositionKomodakis, 
Martins:ICML11,
AdaptiveDiminishingSmoothingSavchynskyyUAI, 
MapMirrorDescent, 
Schwing:NIPS12,
Schwing:ICML14,
Swoboda:CVPR19}. 
Some of these methods apply only to MAP inference problems in pairwise (or low-order) graphical models,
because they need to compute marginals in tree-structured subproblems~\cite{LagrangeanRelaxationJohnsonMalioutov,AcceleratedMAPJojic,AdaptiveDiminishingSmoothingSavchynskyyUAI}
or because they explicitly exploit the fact that the relaxation can be described by polynomial many constraints~\cite{schmidt2011evaluationProximalMAP,Martins:ICML11,Schwing:NIPS12,Schwing:ICML14}.
The papers \cite{AcceleratedMAPJojic,savchynskyy2011study}
obtained accuracy $O(1/n)$ on the dual objective after $n$ iterations, by applying accelerated gradient methods~\cite{Nesterov83}.

The first method that we develop can be viewed as an extension of the technique in~\cite{Swoboda:CVPR19},
which applied an inexact proximal point algorithm (PPA) to the dual objective.
In contrast to~\cite{Swoboda:CVPR19}, we apply an accelerated version of inexact PPA,
specify to which accuracy the subproblems need to be solved,
and analyze the convergence rate.


\subsection{Notation for approximate solutions and organization of the paper}

We introduce the following notation for a function $\phi$ and
accuracy $\varepsilon\ge 0$:
\begin{eqnarray*}
z\approx_\varepsilon\argmin_z\phi(z)\quad&\Leftrightarrow&\quad \phi(z)\le \min_z \phi(z)+\varepsilon \\
z\approx_\varepsilon\argmax_z\phi(z)\quad&\Leftrightarrow&\quad \phi(z)\ge \max_z \phi(z)-\varepsilon \\
z\approx_\varepsilon\prox_{\tau\phi}(\bar z)\quad&\Leftrightarrow&\quad z\approx_\varepsilon\argmin_z \left[\phi(z)+\frac{1}{2\tau}||z-\bar z||^2\right]
\end{eqnarray*}

The rest of the paper is organized as follows. The next section
describes Frank-Wolfe algorithms for minimizing a smooth function over
a convex polytope. Then in Section~\ref{sec:firstalg} we will present our first
approach, which is based on an inexact accelerated proximal point
algorithm on the dual problem. In Section~\ref{sec:secondalg} we present our second
approach, which is based on an inexact proximal primal-dual algorithm
and directly solves the saddle point problem. 
Preliminary numerical results are given in Section~\ref{sec:results}. 
To make the paper more self-contained,
Appendix~\ref{sec:Moreau} discusses some important results of convex
optimization. Additionally, for a better readability, the more
technical proofs are all moved to Appendices~\ref{sec:proof:first}-\ref{sec:proof:last}.

\section{Frank-Wolfe algorithms}\label{sec:FW}
Frank-Wolfe style algorithms is a class of algorithms for minimizing
functions $g_\calP:\calX\rightarrow \mathbb R$ of the form
$g_\calP(x)=g(x)+\delta_\calP(x)$ where $g$ a convex continuously
differentiable function with a Lipschitz continuous gradient and
$\calP$ is a convex polytope.  They are typically iterative techniques
that work by applying a certain procedure ${\tt
  FWstep}(x;g_\calP)\mapsto x'$ where $g_\calP$ is the objective
function, and $x$ and $x'$ are respectively the old and the new
iterates with $g_\calP(x')\le g_\calP(x)$.  We will apply such steps
to functions $g_\calP$ that change from time to time, which is why
$g_\calP$ is made a part of the notation.

It will be convenient to denote
$g^\downarrow_\calP(x)=g_\calP(x)-\min_{x \in\calX}g_\calP(x)$ to be
a shifted version of $g_\calP$ with $\min_{x\in\calX}
g_\calP^\downarrow(x)=0$.  The following fact is known (see~\cite{AFW}).
For completeness, in Appendix~\ref{sec:lemma:FWgap1:proof} we give a proof of the second inequality,
expanding some derivations that were omitted in the proof of~\cite[Theorem 2]{AFW}.
\begin{lemma}[\cite{AFW}]\label{lemma:FWgap1}
For a point $\hat x\in\calP$ denote ${\tt gap}^{\tt FW}(\hat x;g_\calP)=\langle \nabla g(\hat x),\hat x-s\rangle$ where $s=\lmo_\calP(\nabla g(\hat x))$.
Then
$$
g^\downarrow_\calP(\hat x)\le {\tt gap}^{\tt FW}(\hat x;g_\calP)\le
\begin{cases}
g_\calP^\downarrow(\hat x)+LD^2/2 & \mbox{if } g_\calP^\downarrow(\hat x)>LD^2/2 \\
D\sqrt{2L\cdot g_\calP^\downarrow(\hat x)} & \mbox{if } g_\calP^\downarrow(\hat x)\le LD^2/2
\end{cases}
$$
where $D$ is the diameter of $\calP$ and $L=L_g$ is the Lipschitz constant of $\nabla g$.
\end{lemma}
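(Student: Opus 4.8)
The plan is to establish the two inequalities separately, using only convexity of $g$ and the smoothness (descent) lemma for functions with $L$-Lipschitz gradient. For the first inequality $g_\calP^\downarrow(\hat x)\le {\tt gap}^{\tt FW}(\hat x;g_\calP)$, I would use convexity: for any minimizer $x^\star\in\calP$ of $g_\calP$ we have $g(\hat x)-g(x^\star)\le \langle\nabla g(\hat x),\hat x-x^\star\rangle$. Since $s=\lmo_\calP(\nabla g(\hat x))$ minimizes the linear form $\langle\nabla g(\hat x),\cdot\rangle$ over $\calP$ and $x^\star\in\calP$, we get $\langle\nabla g(\hat x),\hat x-x^\star\rangle\le\langle\nabla g(\hat x),\hat x-s\rangle={\tt gap}^{\tt FW}(\hat x;g_\calP)$. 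Chaining these gives $g_\calP^\downarrow(\hat x)=g(\hat x)-g(x^\star)\le{\tt gap}^{\tt FW}(\hat x;g_\calP)$, noting $\hat x,x^\star\in\calP$ so the indicator terms vanish.

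For the second (two-case) inequality, I would run one Frank-Wolfe step from $\hat x$ toward $s$: set $x_\gamma=\hat x+\gamma(s-\hat x)$ for $\gamma\in[0,1]$, which stays in $\calP$ by convexity. The descent lemma gives $g(x_\gamma)\le g(\hat x)+\gamma\langle\nabla g(\hat x),s-\hat x\rangle+\tfrac{L\gamma^2}{2}\|s-\hat x\|^2\le g(\hat x)-\gamma\,{\tt gap}^{\tt FW}(\hat x;g_\calP)+\tfrac{L\gamma^2 D^2}{2}$, since $\|s-\hat x\|\le D$. Because $g_\calP^\downarrow(x_\gamma)\ge 0$, subtracting $\min g_\calP$ from both sides yields $0\le g_\calP^\downarrow(\hat x)-\gamma\,{\tt gap}^{\tt FW}(\hat x;g_\calP)+\tfrac{L\gamma^2 D^2}{2}$, i.e. ${\tt gap}^{\tt FW}(\hat x;g_\calP)\le \tfrac{1}{\gamma}g_\calP^\downarrow(\hat x)+\tfrac{L\gamma D^2}{2}$ for all $\gamma\in(0,1]$. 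Now optimize over $\gamma$: the unconstrained minimizer is $\gamma^\star=\sqrt{2g_\calP^\downarrow(\hat x)/(LD^2)}$. If $g_\calP^\downarrow(\hat x)\le LD^2/2$ then $\gamma^\star\le 1$ is feasible and plugging it in gives the bound $D\sqrt{2L\cdot g_\calP^\downarrow(\hat x)}$; if $g_\calP^\downarrow(\hat x)>LD^2/2$ then the bound is decreasing on $(0,1]$ so the best feasible choice is $\gamma=1$, giving $g_\calP^\downarrow(\hat x)+LD^2/2$.

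The only mildly delicate point is the case split in the optimization of $\tfrac{1}{\gamma}g_\calP^\downarrow(\hat x)+\tfrac{L\gamma D^2}{2}$ over $\gamma\in(0,1]$ — one must check which regime the unconstrained optimum falls into and handle the boundary $\gamma=1$ correctly — but this is elementary calculus. Everything else is a direct application of convexity and the descent lemma; no compactness or strong convexity is needed beyond what is already assumed for $g$ and $\calP$.
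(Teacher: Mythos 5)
Your proposal is correct and follows essentially the same route as the paper: a Frank--Wolfe step $\hat x+\gamma(s-\hat x)$, the descent lemma with $\|s-\hat x\|\le D$, and optimization of the step size $\gamma\in(0,1]$ with the same case split on whether the unconstrained optimum $\gamma^\star=\sqrt{2g_\calP^\downarrow(\hat x)/(LD^2)}$ is feasible. The only cosmetic difference is that you divide by $\gamma$ and minimize the upper bound on the gap directly, whereas the paper first lower-bounds $g_\calP^\downarrow(\hat x)$ by $\max_{\gamma\in[0,1]}\bigl(b\gamma-\tfrac{a\gamma^2}{2}\bigr)$ and then inverts that piecewise bound by a short contradiction argument; you also spell out the first (convexity) inequality, which the paper simply cites from the literature.
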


While the original FW algorithm has a sublinear convergence rate, there are several variants that achieve a linear convergence rate
under some assumptions on $g$. 
Examples include {\em Frank-Wolfe with away steps} (AFW)~\cite{AFW},
{\em Decomposition-invariant Conditional Gradient} (DiCG)~\cite{GarberMeshi:16,BashiriZhang:17}, and {\em Blended Conditional Gradient} (BCG)~\cite{BCG}.
Each step in these methods is classified as either {\em good} or {\em bad}.
Good steps are guaranteed to decrease $g_\calP^\downarrow(x)$ by a constant factor.
Bad steps do not have such guarantee (because they hit the boundary of the polytope),
but they make $x$ ``sparser'' in a certain sense and thus cannot happen too often.

More formally, 
consider a class of functions $\mathfrak{F}$ where each function $g_\calP\in\mathfrak{F}$ is associated with a parameter vector $\Theta_{g_\calP}\in\mathbb R^{p}$,
and $\calP=\dom g_\calP$ is the same for all $g_\calP\in\mathfrak{F}$.
We say that procedure ${\tt FWstep}$ has a {\em linear convergence rate on~$\mathfrak{F}$}
if there exist continuous function $\theta:\mathbb R^{p}\rightarrow(0,1)$ and integers $R_0,R_1\ge 0$
with the following properties:
(i) if the step ${\tt FWstep}(x;g_\calP)\mapsto x'$ for $g_\calP\in\mathfrak{F}$ is good then $g_\calP^\downarrow(x')\le \theta(\Theta_{g_\calP}) \cdot g_\calP^\downarrow(x)$;
(ii) when applying ${\tt FWstep}(x;g_\calP)$ iteratively to some initial vector $x_0$ (possibly for different functions $g_\calP\in\mathfrak{F}$),
at any point we have $N_{\tt bad}\le R_0+R_1 N_{\tt good}$ where $N_{\tt good}$ and $N_{\tt bad}$ are respectively the numbers of good and bad steps.

We will consider two classes of functions:
\begin{itemize}
\item $\mathfrak{F}_{\tt strong}=\{g_\calP(x)=g(x)+\delta_\calP(x)\::\:$  $g$ is a strongly convex
differentiable function with a Lipschitz-continuous gradient, with $\Theta_{g_\calP}=(\mu_g,L_g)$  $\}$.
\item $\mathfrak{F}_{\tt weak}=\{g_\calP(x)=g(Ex)+\langle b,x\rangle+\delta_\calP(x)\::\:$ $g$ is a strongly convex
differentiable function with a Lipschitz-continuous gradient, and $E,b$ are matrix and vector of appropriate dimensions,
with $\Theta_{g_\calP}=(\mu_g,L_g,E,b)$ $\}$.
\end{itemize}
Note that class $\mathfrak{F}_{\tt strong}$ is implicitly parameterized by the dimension of vector $x$,
and class $\mathfrak{F}_{\tt weak}$ is implicitly parameterized by the dimensions of vector $x$ and matrix $E$.

The AFW method is known to have linear convergence on $\mathfrak{F}_{\tt strong}$~\cite{AFW}
and also on $\mathfrak{F}_{\tt weak}$~\cite{BeckShtern:17,AFW}.
From the result of~\cite{BeckShtern:17,AFW} it is easy to deduce that the DiCG method with away steps
also has linear convergence on $\mathfrak{F}_{\tt weak}$, using~\cite[Property 1]{BashiriZhang:17}.
The BCG method~\cite{BCG} has been shown to have linear convergence on class $\mathfrak{F}_{\tt strong}$.
\begin{remark}
Some of the techniques above maintain some additional information about current iterate $x$.
In particular, AFW and BCG represent $x$ as a convex combination of ``atoms'' (vertices of $\calP$): $x=\sum_i \alpha_i a_i$ where $\alpha_i\ge 0$, $\sum_i \alpha_i=1$ and $a_i$ are atoms.
Coefficients $\alpha_i$ are updated together with $x$. For brevity, we omitted this from the notation. 
\end{remark}
\begin{remark}
The claims about the number of bad steps are proven in~\cite{AFW,GarberMeshi:16,BashiriZhang:17,BCG}
assuming that the function $g_\calP$ is fixed. However, the proofs only use ``structural'' properties of current iterate $x$;
they are easily extended to the case when $g_\calP$ is changing, as long as $\calP$ is fixed.
\end{remark}

\myparagraph{Iterative application of ${\tt FWstep}$} Procedure ${\tt FWstep}$ can be used in
a natural way to solve problems $x\approx_\varepsilon \argmin_x g_\calP(x)$ up to desired accuracy $\varepsilon$.

\begin{algorithm}[H]
  \DontPrintSemicolon
  \While{\tt true}
      {
        update $x\leftarrow {\tt FWstep}(x;g_\calP)$ \\
        if ${\tt gap}^{\tt FW}(x;g_\calP)\le \varepsilon$ then return $x$
      }
      \caption{Algorithm ${\tt FW}_\varepsilon(x;g_\calP)$.  \\
      {\bf Output:} vector $x'\approx_\varepsilon\argmin_x g_\calP(x)$. }\label{alg:FWeps}
\end{algorithm}

\begin{proposition}\label{prop:FWlogn}
 Suppose that procedure ${\tt FWstep}$ has a linear convergence rate on class $\mathfrak{F}$ that contains $g_\calP$. Then, \\
(a) The number of good steps made during ${\tt FW}_\varepsilon(x_0;g_\calP)$ satisfies
\begin{equation}\label{eq:Ngood}
N_{\tt good}\le   \log_{1/\theta(\Theta_{g_\calP})} \frac{g^\downarrow_\calP(x_0)}{\min\{\frac 12 L D^2,\frac{1}{2L}\left(\frac{\varepsilon}{D}\right)^2\}}
\end{equation}
where $D$ is the diameter of $\calP$ and $L>0$ is any constant satisfying $L\ge L_g$. \\
(b) Suppose that $g_\calP\in\tilde{\mathfrak{F}}\subseteq\mathfrak{F}$
where $\sup_{g_\calP\in \tilde{\mathfrak{F}},x\in\calP}g^\downarrow_\calP(x)<\infty$, 
$\sup_{g_\calP\in \tilde{\mathfrak{F}}}L_g<\infty$,
and  $\{\Theta_{g_\calP}\:|\:g_\calP\in\tilde{\mathfrak{F}}\}$ is a compact subset of $\mathbb R^p$.
Then $N_{\tt good}=O(\log \frac{1}{\varepsilon})$
where the constant in the $O(\cdot)$ notation depends on $\tilde{\mathfrak{F}}$.
\end{proposition}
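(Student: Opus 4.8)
The plan is to extract from Lemma~\ref{lemma:FWgap1} a suboptimality threshold below which Algorithm~${\tt FW}_\varepsilon$ is forced to stop, and then charge each good step against the geometric decrease it produces. Concretely, set $\rho := \min\{\tfrac12 LD^2,\ \tfrac{1}{2L}(\varepsilon/D)^2\}$, the denominator in \eqref{eq:Ngood}. If an iterate $\hat x\in\calP$ has $g^\downarrow_\calP(\hat x)\le\rho$, then in particular $g^\downarrow_\calP(\hat x)\le\tfrac12 LD^2$, so the second branch of Lemma~\ref{lemma:FWgap1} applies and ${\tt gap}^{\tt FW}(\hat x;g_\calP)\le D\sqrt{2L\,g^\downarrow_\calP(\hat x)}\le D\sqrt{2L\rho}\le\varepsilon$; hence the loop returns. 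Equivalently, every iterate produced at which the loop does not return satisfies $g^\downarrow_\calP>\rho$.

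For part~(a), I would combine this with the defining properties of a linearly convergent ${\tt FWstep}$: every step is non-increasing in $g^\downarrow_\calP$ (since $g_\calP(x')\le g_\calP(x)$), and a good step scales $g^\downarrow_\calP$ by a factor at most $\theta(\Theta_{g_\calP})$. A one-line induction then gives: after $j$ good steps (and arbitrarily many bad ones) the current iterate $x$ obeys $g^\downarrow_\calP(x)\le\theta(\Theta_{g_\calP})^j\,g^\downarrow_\calP(x_0)$. Now take the iterate $\hat x$ that the algorithm produces just before it returns: it did not trigger the return test, so $g^\downarrow_\calP(\hat x)>\rho$, while it has already undergone at least $N_{\tt good}-1$ good steps, so $g^\downarrow_\calP(\hat x)\le\theta(\Theta_{g_\calP})^{\,N_{\tt good}-1}\,g^\downarrow_\calP(x_0)$. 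Hence $\rho<\theta(\Theta_{g_\calP})^{\,N_{\tt good}-1}\,g^\downarrow_\calP(x_0)$, and taking logarithms in base $1/\theta(\Theta_{g_\calP})>1$ yields $N_{\tt good}\le\log_{1/\theta(\Theta_{g_\calP})}(g^\downarrow_\calP(x_0)/\rho)$, which is exactly \eqref{eq:Ngood}. (When the terminating step is itself good one actually gets $\hat x$ with $N_{\tt good}$ good steps already done, so the bound holds with room to spare; otherwise the accounting loses at most an additive $1$, irrelevant below.)

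For part~(b), I would instantiate \eqref{eq:Ngood} uniformly over $\tilde{\mathfrak{F}}$. Take $L:=\sup_{g_\calP\in\tilde{\mathfrak{F}}}L_g<\infty$, which is a legal choice of $L$ in \eqref{eq:Ngood} for every $g_\calP\in\tilde{\mathfrak{F}}$, and note $D<\infty$ because the fixed polytope $\calP$ is bounded. For $\varepsilon\le LD^2$ the minimum defining $\rho$ equals $\varepsilon^2/(2LD^2)$, and $g^\downarrow_\calP(x_0)\le G:=\sup_{g_\calP\in\tilde{\mathfrak{F}},\,x\in\calP}g^\downarrow_\calP(x)<\infty$, so $N_{\tt good}\le\log_{1/\theta(\Theta_{g_\calP})}(2GLD^2/\varepsilon^2)$. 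Finally, since $\theta$ is continuous with values in $(0,1)$ and $\{\Theta_{g_\calP}:g_\calP\in\tilde{\mathfrak{F}}\}$ is compact, $\bar\theta:=\sup_{g_\calP\in\tilde{\mathfrak{F}}}\theta(\Theta_{g_\calP})$ is attained and lies in $(0,1)$; hence $\log_{1/\theta(\Theta_{g_\calP})}(z)\le(\log z)/\log(1/\bar\theta)$ for $z\ge 1$, and therefore $N_{\tt good}\le(\log(2GLD^2)+2\log(1/\varepsilon))/\log(1/\bar\theta)=O(\log(1/\varepsilon))$ with a constant depending only on $\tilde{\mathfrak{F}}$ (through $G$, $L$, $D$, $\bar\theta$).

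I expect the only real subtlety to be the bookkeeping in part~(a): synchronizing the ``one good step $=$ one factor of $\theta$'' count with the step-then-test structure of Algorithm~${\tt FW}_\varepsilon$ and deciding at which iterate to invoke Lemma~\ref{lemma:FWgap1} (the terminating iterate, or the one before it). The genuinely analytic ingredients — the stopping threshold $\rho$ coming from the square-root branch of Lemma~\ref{lemma:FWgap1}, and the uniform contraction factor $\bar\theta<1$ coming from continuity of $\theta$ on a compact parameter set — are both routine, and part~(b) is then an immediate corollary of part~(a).
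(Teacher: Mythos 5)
Your proof is correct and follows essentially the same route as the paper's: the stopping threshold $\min\{\tfrac12 LD^2,\tfrac{1}{2L}(\varepsilon/D)^2\}$ extracted from the square-root branch of Lemma~\ref{lemma:FWgap1} combined with the geometric decrease of $g^\downarrow_\calP$ over good steps for part (a), and the uniform bound $\theta(\Theta_{g_\calP})\le\theta^\ast<1$ from continuity of $\theta$ on the compact parameter set together with the uniform bounds on $g^\downarrow_\calP(x_0)$, $L_g$ and $D$ for part (b). Your explicit bookkeeping (including the at-most-additive-$1$ slack from the step-then-test structure, which the paper's terser proof glosses over as well) is only more careful, not different; the small parenthetical about which case of the terminating step gives the slack is swapped but immaterial.
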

\begin{proof}
\myparagraph{(a)}
By the definition of linear convergence, after the given number of good steps we obtain vector $x$ satisfying $g^\downarrow_\calP(x)\le \min\{\frac 12 LD^2,\frac{1}{2L}\left(\frac{\varepsilon}{D}
\right)^2\}$.
By Lemma~\ref{lemma:FWgap1}, such $x$ satisfies ${\tt gap}^{\tt FW}(x;g_\calP)\le \varepsilon$,
and therefore the algorithm will immediately terminate.

\myparagraph{(b)} Since set $\{\Theta_{g_\calP}\:|\:g_\calP\in\tilde{\mathfrak{F}}\}\subseteq\mathbb R^p$ is compact and function $\theta:\mathbb R^p\rightarrow(0,1)$ is continuous,
there exists $\theta^\ast\in(0,1)$ such that $\theta(\Theta_{g_\calP})\le\theta^\ast$ for all $g_\calP\in\tilde{\mathfrak{F}}$.
Thus, all quantities present in~\eqref{eq:Ngood} (except for $\varepsilon$) are bounded by constants for all $g_\calP\in\tilde{\mathfrak{F}}$.
The claim follows.
\end{proof}

\section{First approach: dual proximal point algorithm}\label{sec:firstalg}

The first approach that we consider is a proximal point algorithm
(PPA) applied to the dual problem:
\[
\max_{y\in\calY}\left\{ H(y):= \min_{x \in \calX} \calL(x,y) \right\}.
\]
For a point $\bar y \in \calY$ and a smoothing parameter $\gamma > 0$,
we let
\[
\calL_{\gamma, \bar y}(x,y) = \calL(x,y) - \frac{1}{2\gamma}\norm{y-\bar y}^2,
\]
which can be seen as the original saddle-point problem, but with an
additional proximal regularization on the dual variable. In each iteration, the
PPA solves a maximization problem of the following form:
\[ 
\hat y = \argmax_{y\in\calY} \left\{H_{\gamma,\bar y}(y):=
\min_{x\in\calX} \calL_{\gamma, \bar y}(x,y) =
H(y)-\frac{1}{2\gamma}\norm{y-\bar y}^2\right\}.
\]
Based on our structure, it will be beneficial to first solve for $\hat
x$ and then to solve for $\hat y$ via its proximal map, that is
\[
\hat x = \arg\min_{x\in\calX} \left\{ \calF_{\gamma, \bar y}(x) :=
\max_{y\in\calY} \calL_{\gamma, \bar y}(x,y)\right\}, \quad 
\hat y =
\argmax_{y\in\calY}\calL_{\gamma,\bar y}(\hat x,y)=
 \prox_{\gamma h^*}(\bar y + \gamma K\hat x).
\]
Note that also for the smoothed saddle-point problem, strong duality
holds,
\[
\min_{x\in\calX} \calF_{\gamma, \bar y}(x) = \min_{x\in\calX}\max_{y
  \in \calY} \calL_{\gamma, \bar y}(x,y) = \max_{y \in \calY}
H_{\gamma, \bar y}(y),
\]
and hence each step of the PPA can be equivalently  written as
minimizing the primal-dual gap
\begin{equation}\label{eq:pdgap}
(\hat x, \hat y) = \argmin_{(x,y) \in \calX \times \calY}
\calF_{\gamma, \bar y}(x) - H_{\gamma, \bar y}(y)
\end{equation}

It is a well-known fact that the basic proximal point algorithm can be
accelerated to achieve a $O(1/n^2)$ convergence
rate~\cite{Guler-accelerated,Salzo-Villa-PP}, which follows from the
fact that the PPA can be seen as a steepest descent on the Moreau
envelope (see Appendix~\ref{sec:Moreau}), which has a Lipschitz continuous gradient
and hence can be accelerated using the technique of
Nesterov~\cite{Nesterov83}.

However, based on our general assumptions on the
problem~\eqref{eq:saddle}, we will not be able to solve the proximal
subproblems~\eqref{eq:pdgap} exactly but only up to a certain error
$\varepsilon > 0$ that is
\[
(\hat x,\hat y) \approx_\varepsilon \argmin_{(x,y) \in \calX \times \calY}
  \calF_{\gamma, \bar y}(x) - H_{\gamma, \bar y}(y),
\]
which clearly implies that $\hat x \approx_\varepsilon \argmin_{x \in
  \calX} \calF_{\gamma, \bar y}(x)$ as well as $\hat y
\approx_\varepsilon \argmax_{y \in \calY} H_{\gamma, \bar
  y}(y)$. However, we can still apply the recently proposed inexact
accelerated proximal gradient algorithm of Aujol and
Dossal~\cite{AujolDossal:15}, that can handle such approximation while
still achieving an optimal $O(1/n^2)$ convergence rate on the dual
objective. Note that the original method given
in~\cite{AujolDossal:15} only generates the dual sequence $\{y_n\}$
but in Algorithm~\ref{alg:FISTA} below we also keep the primal
sequence $\{x_n\}$ which is needed to obtain a solution of the
original saddle-point problem~\eqref{eq:saddle}. Therefore, the
algorithm below can also be seen as a generalization for solving
saddlepoint problems.

\begin{algorithm}[H]
  \DontPrintSemicolon choose nonnegative sequences $\{t_n\}$, $\{\varepsilon_n\}$ so that
  $t_1=1$ and $\rho_n\eqdef t_{n-1}^2-t_n^2+t_n>0$ for all $n\ge 2$
  \\ choose initial point $y_0\in\calY$, set $\bar y_0=y_0$
  \\ \For{$n=1,2,\ldots$} {
    \begin{eqnarray}
      (x_n,y_n) &\approx_{\varepsilon_n} & \argmin_{(x,y) \in \calX \times \calY}
      \calF_{\gamma, \bar y_{n-1}}(x) - H_{\gamma, \bar y_{n-1}}(y)\\
      \bar y_n &=& y_n + \tfrac{t_n-1}{t_{n+1}} (y_n-y_{n-1})
    \end{eqnarray} 
  }
  \caption{Approximate accelerated proximal gradient
    algorithm.}\label{alg:FISTA}
\end{algorithm}

\addtocounter{equation}{-2}
\refstepcounter{equation}
\label{eq:xy:update}
\refstepcounter{equation}
\label{eq:bary:update}

In order to analyze this algorithm, let us introduce the following quantities:
\begin{align}
u_0&=y_0 , \qquad u_n= y_{n-1}+t_n(y_n-y_{n-1}) \qquad\forall n\ge 1 \label{eq:un:def} \\
A_n&=\sum_{k=1}^n t_k\sqrt{2\gamma \varepsilon_k} \\
B_n&=\sum_{k=1}^n \gamma t_k^2\varepsilon_k \\
W_n&=t_n^2[H(y^\star)-H(y_n)] + \sum_{k=2}^n \rho_k [H(y^\star)-H(y_{k-1})] \\
T_n&= t_n^2 + \sum_{k=2}^n \rho_k = \sum_{k=1}^n t_k 
\end{align}
First, we recall the following result from~\cite{AujolDossal:15}.
\begin{theorem}[\cite{AujolDossal:15}]\label{th:PPA:orig}
For any $n\ge 1$ there holds 
\begin{eqnarray}
W_n + \frac{1}{2\gamma}||u_n-y^\star||^2 
&\le& \frac{C^\ast_n}{2\gamma}
\end{eqnarray}
where
\begin{eqnarray}
C^\ast_n&=&||y_0-y^\star||^2+2A_n\left(||y_0-y^\star||+2A_n+\sqrt{2B_n}\right)+2B_n \\
&\le& \left(||y_0-y^\star||+2A_n+\sqrt{2B_n}\right)^2 
\end{eqnarray}
\end{theorem}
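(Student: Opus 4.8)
The plan is to recognize Theorem~\ref{th:PPA:orig} as the convergence estimate for the inexact accelerated proximal point method applied to the concave dual $H$: the map $\bar y\mapsto\argmax_y H_{\gamma,\bar y}(y)$ is the proximal operator of the convex function $-H$, and composing it with Nesterov's over-relaxation (governed by $\{t_n\}$) produces the FISTA iteration. Throughout set $h_k:=H(y^\star)-H(y_k)\ge 0$ and let $\hat y_n:=\argmax_y H_{\gamma,\bar y_{n-1}}(y)$ be the \emph{exact} prox. Since $y\mapsto -H_{\gamma,\bar y_{n-1}}(y)=-H(y)+\tfrac1{2\gamma}\|y-\bar y_{n-1}\|^2$ is $\tfrac1\gamma$-strongly convex and $y_n$ is an $\varepsilon_n$-minimizer of it, we immediately get $\|y_n-\hat y_n\|^2\le 2\gamma\varepsilon_n$.

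\emph{Step 1: one inexact step, kept in $\varepsilon$-subgradient form.} From $\hat y_n=\prox_{\gamma(-H)}(\bar y_{n-1})$ one has $g_n:=\tfrac1\gamma(\bar y_{n-1}-\hat y_n)\in\partial(-H)(\hat y_n)$; writing $e_n:=\hat y_n-y_n$ (so $\|e_n\|\le\sqrt{2\gamma\varepsilon_n}$) gives $y_n=\bar y_{n-1}-\gamma g_n-e_n$, and a short computation using the $\varepsilon_n$-optimality of $y_n$ shows $g_n\in\partial_{\varepsilon_n}(-H)(y_n)$. Combining the $\varepsilon_n$-subgradient inequality with the three-point identity then yields, for every $y\in\calY$,
\[
H(y)-H(y_n)\ \le\ \frac1{2\gamma}\bigl(\|y-\bar y_{n-1}\|^2-\|y-y_n\|^2\bigr)+\frac1\gamma\langle e_n,\,y-y_n\rangle+\varepsilon_n .
\]
It is essential to keep the error as the inner product $\langle e_n,y-y_n\rangle$, not to bound it yet by $\|e_n\|\,\|y-y_n\|$.

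\emph{Step 2: the FISTA combination.} I would add the inequality of Step 1 at $y=y^\star$ with weight $t_n$ and at $y=y_{n-1}$ with weight $t_n(t_n-1)\ge 0$ (here one uses $t_n\ge 1$, as holds for the standard choices). For the quadratic terms, the identities $t_n\bar y_{n-1}=(t_n-1)y_{n-1}+u_{n-1}$ and $u_n=t_n y_n-(t_n-1)y_{n-1}$ together with the parallelogram identity give
\[
t_n\bigl[\|y^\star-\bar y_{n-1}\|^2+(t_n-1)\|y_{n-1}-\bar y_{n-1}\|^2-\|y^\star-y_n\|^2-(t_n-1)\|y_{n-1}-y_n\|^2\bigr]=\|u_{n-1}-y^\star\|^2-\|u_n-y^\star\|^2 ,
\]
the stray $(t_n-1)\|y^\star-y_{n-1}\|^2$ terms cancelling. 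On the left the weighted combination equals $t_n^2h_n-t_n(t_n-1)h_{n-1}=W_n-W_{n-1}$, precisely because $\rho_n=t_{n-1}^2-t_n^2+t_n$. The crucial point is the error: the weighted sum of inner products collapses, $t_n\langle e_n,y^\star-y_n\rangle+t_n(t_n-1)\langle e_n,y_{n-1}-y_n\rangle=-t_n\langle e_n,u_n-y^\star\rangle$, so writing $V_n:=W_n+\tfrac1{2\gamma}\|u_n-y^\star\|^2$ one arrives at the clean recursion $V_n-V_{n-1}\le t_n^2\varepsilon_n+\tfrac{t_n}{\gamma}\sqrt{2\gamma\varepsilon_n}\,\|u_n-y^\star\|$.

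\emph{Step 3: telescoping and closing the recursion.} Summing from $k=1$ to $n$ (with $W_0=0$, $u_0=y_0$) and multiplying by $2\gamma$ gives, with $a_k:=\|u_k-y^\star\|$,
\[
2\gamma W_n+a_n^2\ \le\ \|y_0-y^\star\|^2+2B_n+2\sum_{k=1}^n t_k\sqrt{2\gamma\varepsilon_k}\,a_k .
\]
Since $W_n\ge 0$ the left side dominates $a_n^2$; evaluating this at the index attaining $\bar a_n:=\max_{k\le n}a_k$ (this is the classical ``sequence lemma'') yields $\bar a_n\le A_n+\sqrt{A_n^2+\|y_0-y^\star\|^2+2B_n}$. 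Substituting back and using $\sqrt{p+q+r}\le\sqrt p+\sqrt q+\sqrt r$ gives $2\gamma W_n+\|u_n-y^\star\|^2\le C^\ast_n$, and $C^\ast_n\le(\|y_0-y^\star\|+2A_n+\sqrt{2B_n})^2$ is a direct expansion; dividing by $2\gamma$ completes the proof. The main obstacle is Step 2: arranging that the error term comes out proportional to $\|u_n-y^\star\|$ \emph{alone} is what forces one to carry the $\varepsilon_n$-subgradient error as an inner product through the whole combination, and it is the reason the constants in $C^\ast_n$ take exactly the stated form; the surrounding quadratic bookkeeping is routine but must be done carefully.
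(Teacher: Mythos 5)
Your proof is correct and takes essentially the same route as the cited Aujol--Dossal argument and as the paper's own proof of the generalization Theorem~\ref{th:PPA} (which recovers this statement at $y=y^\star$): an $\varepsilon$-subgradient description of the inexact proximal step (the paper's Lemma~\ref{lemma:GJHAS}), the $t_n$-weighted FISTA combination arranged so the error term is proportional to $\|u_n-y^\star\|$ alone, and Schmidt's sequence lemma (Lemma~\ref{lemma:Schmidt:seq}) to close the recursion, yielding exactly the stated $C^\ast_n$. The only cosmetic difference is that you sum the one-step inequality at $y^\star$ and $y_{n-1}$ with weights $t_n$ and $t_n(t_n-1)$ and invoke the parallelogram identity, whereas the paper evaluates a single inequality at the convex combination $z=(1-1/t_n)y_{n-1}+t_n^{-1}y^\star$ and uses concavity; both variants implicitly need $t_n\ge 1$, which you correctly flag and which the paper's listed choices of $\{t_n\}$ satisfy.
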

This theorem immediately implies the following results.  (Note, some
of the statements below are slightly modified versions of statements
from~\cite{AujolDossal:15}, but follow exactly the same proofs).
\begin{corollary}[\cite{AujolDossal:15}]\label{cor:PPA:orig}
Suppose that sequences $\{A_n\}$ and $\{B_n\}$ are bounded. 
Then \\
(a) $H(y^\star)-H(y_n)=O(1/t_n^2)$. \\
(b)  $H(y^\star)-H(y^e_n)=O(1/T_n)$ where $y^e_n=(t_n^2y_n+\sum_{k=2}^n\rho_k y_{k-1})/T_n$. \\
(c) If function $-H(y)$ is coercive then sequence $\{y_n\}$ is bounded, and $||y_n-y_{n-1}||=O(1/t_n)$.
\end{corollary}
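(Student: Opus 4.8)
The plan is to derive all three parts directly from Theorem~\ref{th:PPA:orig}. The first step is to record that the hypotheses make the right-hand side of that theorem uniformly bounded: since $\{A_n\}$ and $\{B_n\}$ are bounded we may put $\bar A=\sup_n A_n<\infty$ and $\bar B=\sup_n B_n<\infty$, and then the second displayed inequality in Theorem~\ref{th:PPA:orig} gives $C^\ast_n\le C:=(\|y_0-y^\star\|+2\bar A+\sqrt{2\bar B})^2$ for every $n$. Thus $W_n+\tfrac{1}{2\gamma}\|u_n-y^\star\|^2\le\tfrac{C}{2\gamma}$ for all $n\ge1$, and the rest is bookkeeping. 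For (a) I would use that $y^\star$ maximizes $H$ (strong duality), so $H(y^\star)-H(y_k)\ge0$ for every $k$; together with $\rho_k>0$ this makes each summand of $W_n=t_n^2[H(y^\star)-H(y_n)]+\sum_{k=2}^n\rho_k[H(y^\star)-H(y_{k-1})]$ nonnegative, so discarding all but the first summand gives $t_n^2(H(y^\star)-H(y_n))\le W_n\le\tfrac{C}{2\gamma}$, i.e.\ $H(y^\star)-H(y_n)\le\tfrac{C}{2\gamma t_n^2}=O(1/t_n^2)$.

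For (b) the two facts I need are that $H$ is concave --- it equals $-h^\ast(y)$ plus the pointwise minimum over $x$ of the affine-in-$y$ functions $\langle Kx,y\rangle+f_\calP(x)$ --- and that $y^e_n$ is a genuine convex combination of $y_n,y_1,\dots,y_{n-1}$: indeed the weights $t_n^2/T_n$ and $\rho_k/T_n$ ($2\le k\le n$) are nonnegative and sum to $1$ because $T_n=t_n^2+\sum_{k=2}^n\rho_k$. Jensen's inequality for the concave $H$ then yields $H(y^e_n)\ge\tfrac{t_n^2}{T_n}H(y_n)+\sum_{k=2}^n\tfrac{\rho_k}{T_n}H(y_{k-1})$, and subtracting from $H(y^\star)$ gives $H(y^\star)-H(y^e_n)\le\tfrac{1}{T_n}W_n\le\tfrac{C}{2\gamma T_n}=O(1/T_n)$.

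For (c), the uniform bound above already yields $\|u_n-y^\star\|^2\le C$, so $\{u_n\}$ is bounded. To pass from $\{u_n\}$ to $\{y_n\}$ I would invoke the coercivity of $-H$: by part (a), $H(y_n)\ge H(y^\star)-\tfrac{C}{2\gamma t_n^2}$ stays bounded below (the parameter choices of interest are nondecreasing with $t_1=1$, so $t_n\ge1$), hence every $y_n$ lies in a superlevel set $\{y:H(y)\ge\alpha\}$ of $H$, which is bounded because $-H$ is coercive; therefore $R:=\sup_n\|y_n-y^\star\|<\infty$. Finally, from the definition $u_n=y_{n-1}+t_n(y_n-y_{n-1})$ we get $\|y_n-y_{n-1}\|=\tfrac{1}{t_n}\|u_n-y_{n-1}\|\le\tfrac{1}{t_n}(\sqrt C+R)=O(1/t_n)$. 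The step I expect to be the main obstacle is exactly this boundedness assertion in (c): the relation $u_n=(1-t_n)y_{n-1}+t_ny_n$ is only an \emph{affine}, not convex, combination once $t_n>1$, so boundedness of $\{u_n\}$ does not by itself force boundedness of $\{y_n\}$ --- one has to route through the coercivity of $-H$ and the value estimate from (a), and one must ensure the sequence $\{t_n\}$ does not degenerate towards $0$ (which would invalidate the lower bound on $H(y_n)$).
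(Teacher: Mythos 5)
Your proof is correct and takes essentially the same route as the paper, which simply observes that Corollary~\ref{cor:PPA:orig} follows immediately from Theorem~\ref{th:PPA:orig} (by the same arguments as in Aujol--Dossal): dropping the nonnegative terms of $W_n$ for (a), concavity of $H$ plus Jensen for (b), and coercivity of $-H$ together with boundedness of $\{u_n\}$ for (c). Your caveat about $\{t_n\}$ staying bounded away from zero is harmless in context, since all parameter choices the paper considers satisfy $t_n\ge 1$.
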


Note that the rate of convergence in Corollary~\ref{cor:PPA:orig}
depends on the choice of sequence $\{t_n\}$. These are some of the
choices that have appeared in the literature:
\begin{itemize}
\item PPA: $t_n=1$ for all $n\ge 1$. Then
  $T_n=\Theta(n)$.
\item Nesterov~\cite{Nesterov83,FISTA}: $t_{n+1}=(1+\sqrt{1+4t_n^2})/2$ for $n\ge
  1$. Then $t_n=\Theta(n)$ and $T_n=\Theta(n^2)$.
\item Aujol-Dossal~\cite{AujolDossal:15}: $t_n=\left(\frac{n+a-1}{a}\right)^d$ with
  $d\in(0,1]$ and $a>\max\{1,(2d)^{1/d}\}$.  Then $t_n=\Theta(n^d)$
  and $T_n=\Theta(n^{d+1})$.  
\end{itemize}
When stating complexities, we will implicitly assume below that either the second case or the third case with $d=1$ is used,
meaning that $t_n=\Theta(n)$ and $T_n=\Theta(n^2)$.

We now generalize Theorem~\ref{th:PPA:orig} to the situation in this
section. This generalization is somewhat analogous to the generalization obtained by Tseng~\cite{Tseng}
(for a different Nesterov-type algorithm and with a different proof).
\begin{theorem}\label{th:PPA}
 Denote $x^e_n=\sum_{k=1}^n t_k x_k / T_n$. For any $y\in\calY$ and any
$n\ge 1$ there holds
\begin{eqnarray}
T_n \left[\calL(x^e_n,y)-H(y^\star)\right]+W_n +
\frac{1}{2\gamma}||u_n-y||^2 &\le& \frac{C_n(y)}{2\gamma}
\end{eqnarray}
where
\begin{eqnarray}\label{eq:Cny}
C_n(y)&=&||y_0-y||^2+2A_n\left(||y-y^\star||+||y_0-y^\star||+2A_n+\sqrt{2B_n}\right)+2B_n
\end{eqnarray}
\end{theorem}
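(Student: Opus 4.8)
The plan is to mimic the proof of Theorem~\ref{th:PPA:orig} from \cite{AujolDossal:15} but to carry along the extra primal information. The key object is the per-iteration descent inequality for the inexact accelerated proximal point step. Recall that at iteration $n$ we compute $(x_n,y_n)\approx_{\varepsilon_n}\argmin_{(x,y)}[\calF_{\gamma,\bar y_{n-1}}(x)-H_{\gamma,\bar y_{n-1}}(y)]$, so in particular $\hat x_n:=x_n$ satisfies $\calF_{\gamma,\bar y_{n-1}}(x_n)\le\min_x\calF_{\gamma,\bar y_{n-1}}(x)+\varepsilon_n$ and $y_n$ satisfies $H_{\gamma,\bar y_{n-1}}(y_n)\ge\max_y H_{\gamma,\bar y_{n-1}}(y)-\varepsilon_n$. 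First I would extract from the $\varepsilon_n$-optimality of $x_n$ the crucial bound: for every $y\in\calY$,
\[
\calL(x_n,y)\;\le\;\calL_{\gamma,\bar y_{n-1}}(x_n,y)+\tfrac1{2\gamma}\|y-\bar y_{n-1}\|^2\;\le\;\calF_{\gamma,\bar y_{n-1}}(x_n)+\tfrac1{2\gamma}\|y-\bar y_{n-1}\|^2\;\le\;H_{\gamma,\bar y_{n-1}}(y_n')+\varepsilon_n+\tfrac1{2\gamma}\|y-\bar y_{n-1}\|^2,
\]
where $y_n'=\prox_{\gamma h^*}(\bar y_{n-1}+\gamma K x_n)$ is the \emph{exact} dual proximal step associated with $x_n$ (used here only as an intermediate device; the algorithm itself uses the possibly inexact $y_n$). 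Comparing $y_n$ to $y_n'$ via $\varepsilon_n$-optimality and the standard prox non-expansiveness / three-point inequality then gives, for all $y$,
\[
T_n\big[\calL(x^e_n,y)-H(y^\star)\big]\;\le\;\sum_{k=1}^n t_k\big[\calL(x_k,y)-H(y^\star)\big],
\]
by convexity of $\calL(\cdot,y)$ in $x$ together with $\calL(x^e_n,y)\le\frac1{T_n}\sum_k t_k\calL(x_k,y)$ and $T_n=\sum_k t_k$; this is the place where introducing $x^e_n$ as a $t_k$-weighted average pays off.

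The second step is to combine these per-iteration inequalities with the momentum recursion $\bar y_n=y_n+\frac{t_n-1}{t_{n+1}}(y_n-y_{n-1})$ using the telescoping identity built on $u_n=y_{n-1}+t_n(y_n-y_{n-1})$. This is exactly the bookkeeping in \cite{AujolDossal:15}: one multiplies the inequality at step $k$ by an appropriate weight, uses $\rho_k=t_{k-1}^2-t_k^2+t_k>0$ and $t_1=1$, and the $\frac1{2\gamma}\|u_k-y^\star\|^2$ terms telescope while producing the $W_n$ sum of dual suboptimality gaps. The only modification is that the "primal residual" terms $t_k[\calL(x_k,y)-H(y^\star)]$ now accumulate on the left-hand side as $T_n[\calL(x^e_n,y)-H(y^\star)]$, and since $\calL(x_k,y)\ge H(y)$ trivially wouldn't help, one instead keeps them as they are and they get absorbed by the same Cauchy–Schwarz estimates that produce the $A_n$, $B_n$ terms. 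Tracking the dependence on $y$ (not just $y^\star$) is what turns $C^\ast_n$ into $C_n(y)$: the extra $\|y-y^\star\|$ inside the parenthesis in \eqref{eq:Cny} comes from bounding a cross term like $\langle$ something $,y-y^\star\rangle$ arising because the primal inequality is evaluated at a general $y$ rather than at $y^\star$.

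The main obstacle I expect is getting the constants in $C_n(y)$ exactly right — specifically, correctly propagating the inexactness $\varepsilon_k$ through both the primal step (affecting $\calF_{\gamma,\bar y_{k-1}}(x_k)$) and the dual step (affecting $y_k$ versus $y_k'$) and seeing that the two sources of error combine into the single sequences $A_n=\sum t_k\sqrt{2\gamma\varepsilon_k}$ and $B_n=\sum\gamma t_k^2\varepsilon_k$ rather than producing new error terms. The clean way to handle this is to prove a single "one-step lemma" of the form: for all $y$,
\[
\tfrac{t_{k}^2}{2\gamma}\|\text{(next)}\|^2 - \tfrac{t_{k-1}^2}{2\gamma}\|\text{(prev)}\|^2 \;\le\; t_k^2\big[H(y_k)-H(y^\star)\big] - (\text{terms}) + t_k\sqrt{2\gamma\varepsilon_k}\,(\cdots) + \gamma t_k^2\varepsilon_k - t_k\big[\calL(x_k,y)-H(y^\star)\big] + \cdots,
\]
i.e. isolate the primal term with the right sign, then sum over $k$. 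Once that lemma is in place, the rest is the summation-by-parts already done in \cite{AujolDossal:15}, so I would state the one-step lemma, defer its (routine but delicate) verification to Appendix~\ref{sec:proof:first}, and in the main text only indicate how summing it yields the displayed bound with $C_n(y)\le(\|y_0-y\|+\ldots)^2$ after completing the square, exactly paralleling the passage from $C^\ast_n$ to its squared upper bound in Theorem~\ref{th:PPA:orig}.
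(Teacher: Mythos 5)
Your plan follows essentially the same route as the paper's proof: a one-step three-point inequality for the inexact proximal step (the paper derives it via the perturbed-subgradient characterization of approximate proximal points, Lemmas~\ref{lemma:ALSKFJASF}--\ref{lemma:NFLAKSN}, rather than by comparing with the exact prox $y_n'$, but the error terms $t_k\sqrt{2\gamma\varepsilon_k}$ and $\gamma t_k^2\varepsilon_k$ come out the same), then the Aujol--Dossal weighted summation with $z=(1-\tfrac1{t_k})y_{k-1}+\tfrac1{t_k}y$, concavity in $y$ and convexity of $\calL(\cdot,y)$ to form $x_n^e$, and finally control of the $\|u_k-y\|$ error terms. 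The only loose point is your account of where $\|y-y^\star\|$ enters: in the paper it comes not from a Cauchy--Schwarz cross term but from first specializing the summed inequality to $y=y^\star$ (where the primal gap is nonnegative and can be dropped), bounding $\|u_k-y^\star\|$ via Schmidt's sequence lemma, and then using the triangle inequality $\|u_k-y\|\le\|u_k-y^\star\|+\|y-y^\star\|$ — a minor bookkeeping difference, not a gap in the approach.
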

Note that setting $y=y^\star$ in Theorem~\ref{th:PPA} recovers
Theorem~\ref{th:PPA:orig}, since in this case we have
$\calL(x^e_n,y^\star)\ge H(y^\star)$ and $C_n(y^\star)=C^\ast_n$.

\begin{corollary}\label{cor:PPA:new}
Suppose that sequences $\{A_n\},\{B_n\}$ are bounded and $\dom
h^\ast\subseteq\calY$ is a compact set. Then
$\calF(x^e_n)-\calF(x^\star)=O(1/T_n)=O(1/n^2)$.
\end{corollary}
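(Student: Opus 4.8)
The plan is to extract the bound directly from Theorem~\ref{th:PPA} by specializing it to a good choice of $y$ and discarding the terms that only help. Concretely, in
\[
T_n\left[\calL(x^e_n,y)-H(y^\star)\right]+W_n+\tfrac{1}{2\gamma}\|u_n-y\|^2\;\le\;\tfrac{C_n(y)}{2\gamma},
\]
the term $\tfrac{1}{2\gamma}\|u_n-y\|^2$ is trivially nonnegative, and $W_n\ge 0$ because $y^\star$ maximizes $H$ (so each $H(y^\star)-H(y_k)\ge 0$) while $t_n^2\ge 0$ and $\rho_k>0$ by the admissibility condition on $\{t_n\}$. Dropping both gives, for every $y\in\calY$,
\[
\calL(x^e_n,y)-H(y^\star)\;\le\;\frac{C_n(y)}{2\gamma T_n}.
\]

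Next I would pass from $\calL(x^e_n,\cdot)$ to the primal objective. Since the coefficients $t_k/T_n$ sum to one (because $T_n=\sum_{k=1}^n t_k$) and each $x_k\in\calP$, the point $x^e_n$ is a convex combination of points of the convex set $\calP$, hence $x^e_n\in\calP$ and $f_\calP(x^e_n)<\infty$; therefore $\calF(x^e_n)=\max_{y\in\calY}\calL(x^e_n,y)$, and since $h^*(y)=+\infty$ off $\dom h^*$ this maximum equals $\sup_{y\in\dom h^*}\calL(x^e_n,y)$. Using strong duality $\calF(x^\star)=H(y^\star)$ and taking the supremum over $y\in\dom h^*$ in the previous display yields
\[
\calF(x^e_n)-\calF(x^\star)\;\le\;\frac{1}{2\gamma T_n}\sup_{y\in\dom h^*}C_n(y).
\]

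It then remains to bound the right-hand side uniformly in $n$. From $C_n(y)=\|y_0-y\|^2+2A_n\bigl(\|y-y^\star\|+\|y_0-y^\star\|+2A_n+\sqrt{2B_n}\bigr)+2B_n$, on the compact set $\dom h^*$ the quantities $\|y_0-y\|$ and $\|y-y^\star\|$ are bounded by a constant depending only on $y_0,y^\star$ and $\dom h^*$, while $A_n,B_n$ are bounded by hypothesis. Hence $C^\ast:=\sup_n\sup_{y\in\dom h^*}C_n(y)<\infty$, and $\calF(x^e_n)-\calF(x^\star)\le C^\ast/(2\gamma T_n)=O(1/T_n)=O(1/n^2)$ under the standing assumption $T_n=\Theta(n^2)$.

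I do not expect a genuine obstacle here: Theorem~\ref{th:PPA} does essentially all the work. The only points that need a line of care are (i) the sign argument giving $W_n\ge 0$, which relies on the optimality of $y^\star$ and $\rho_k>0$; (ii) the reduction $\max_{y\in\calY}=\sup_{y\in\dom h^*}$, which is precisely where compactness of $\dom h^*$ is used and is essential, since otherwise $C_n(y)$ need not stay bounded over the relevant $y$'s; and (iii) the observation $x^e_n\in\calP$, which ensures $\calF(x^e_n)$ is finite and equals $\max_y\calL(x^e_n,y)$.
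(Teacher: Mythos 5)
Your proof is correct and follows essentially the same route as the paper: specialize Theorem~\ref{th:PPA}, drop the nonnegative terms $W_n$ and $\tfrac{1}{2\gamma}\|u_n-y\|^2$, take the supremum over $y\in\dom h^\ast$ using $\calF(x^e_n)=\max_{y\in\dom h^\ast}\calL(x^e_n,y)$ and $\calF(x^\star)=H(y^\star)$, and bound $C_n(y)$ uniformly via compactness of $\dom h^\ast$ and boundedness of $\{A_n\},\{B_n\}$. You merely spell out details (such as $W_n\ge 0$ and $x^e_n\in\calP$) that the paper leaves implicit.
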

\begin{proof}
By the assumption of the corollary, quantity $C_n(y)$ is bounded for
any $y\in \dom h^\ast$ and $n\ge 1$.  We also have
$\calF(x^e_n)=\max\limits_{y\in\dom h^\ast}\calL(x^e_n,y)$ and
$\calF(x^\star)=H(y^\star)$.  The claim now follows directly from
Theorem~\ref{th:PPA}.
\end{proof}

Next, we analyze the special case of problem~\eqref{eq:saddle} corresponding to constrained optimization problem $\mbox{$\min_{x\in\mathbb R^d}\{f_\calP(x)\:|\:Ax=b\}$}$.
\begin{theorem}\label{th:LinearConstraint}
Suppose we are in the case of the saddle  problem in eq.~\eqref{eq:saddle:Axb}. (a) There holds
\begin{eqnarray}
f_\calP(x^e_n)-f_\calP(x^\star)&\le& \frac{C_n(0)}{2\gamma T_n} \label{eq:LinearConstraint:a} \\
||Ax^e_n-b||&\le& \sqrt{\frac{2\max\{f_\calP(x^\star)-f_\calP(x^e_n),0\}}{\gamma T_n}} + \frac{\hat C_n}{\gamma T_n} \label{eq:LinearConstraint:b}
\end{eqnarray}
where
$$
\hat C_n=
||y_0||+A_n
+\sqrt{||y_0||^2+2A_n\left( ||y^\star||+||y_0-y^\star||+2A_n+\sqrt{2B_n}\right)+2B_n}
$$
(b) 
There exists constant $\beta\ge 0$ such that for any $x\in\calP$ 
we have $f_\calP(x^\star)-f_\calP(x)\le \beta ||Ax-b||$. \\ 
(c) 
If sequences $\{A_n\}$ and $\{B_n\}$ are bounded then $f_\calP(x^e_n)-f_\calP(x^\star)=O(1/n^2)$
and $||Ax^e_n-b||=O(1/n^2)$.
\end{theorem}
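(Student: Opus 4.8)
The plan is to derive all three parts from Theorem~\ref{th:PPA} by inserting carefully chosen test vectors $y\in\calY$. First I would specialize the general quantities to eq.~\eqref{eq:saddle:Axb}: here $\calL(x,y)=f_\calP(x)+\scp{y}{Ax-b}$, the constrained optimum $x^\star$ satisfies $Ax^\star=b$ (otherwise $\calF(x^\star)=\max_{y}\calL(x^\star,y)=+\infty$), and strong duality gives $H(y^\star)=\calF(x^\star)=f_\calP(x^\star)$. Also $x^e_n=\sum_{k=1}^n t_k x_k/T_n$ is a convex combination of the iterates $x_1,\dots,x_n\in\calP$, hence $x^e_n\in\calP$ and $f_\calP(x^e_n)$ is finite. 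The key elementary observation is that in Theorem~\ref{th:PPA} both $W_n$ (a nonnegative combination of the gaps $H(y^\star)-H(y_j)\ge 0$, since $\rho_k>0$) and $\tfrac{1}{2\gamma}\|u_n-y\|^2$ are nonnegative, so for every $y\in\calY$
$$f_\calP(x^e_n)-f_\calP(x^\star)+\scp{y}{Ax^e_n-b}\ \le\ \frac{C_n(y)}{2\gamma T_n}.$$

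For part (a), eq.~\eqref{eq:LinearConstraint:a} is immediate by taking $y=0$: the inner-product term vanishes and $C_n(0)$ is exactly the constant appearing there. For eq.~\eqref{eq:LinearConstraint:b}, write $r=Ax^e_n-b$ and assume $r\ne 0$ (otherwise there is nothing to prove). Take $y=\lambda\, r/\|r\|$ with a free parameter $\lambda\ge 0$, so that $\scp{y}{r}=\lambda\|r\|$; bounding $\|y_0-y\|\le\|y_0\|+\lambda$ and $\|y-y^\star\|\le\lambda+\|y^\star\|$ in~\eqref{eq:Cny} gives $C_n(y)\le\lambda^2+2\lambda(\|y_0\|+A_n)+C_n(0)$. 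Substituting this into the master inequality produces a scalar quadratic inequality in $\lambda$ valid for all $\lambda\ge 0$; maximizing its left-hand side over $\lambda$ (the interior maximizer is $\lambda^\star=\gamma T_n\|r\|-\|y_0\|-A_n$, and when this is negative the bound is trivial since then $\gamma T_n\|r\|<\|y_0\|+A_n\le\hat C_n$), one obtains $(\gamma T_n\|r\|-\|y_0\|-A_n)^2\le C_n(0)+2\gamma T_n\max\{f_\calP(x^\star)-f_\calP(x^e_n),0\}$, and $\sqrt{a+b}\le\sqrt a+\sqrt b$ together with $\hat C_n=\|y_0\|+A_n+\sqrt{C_n(0)}$ rearranges this into exactly~\eqref{eq:LinearConstraint:b}.

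Part (b) is the standard error bound furnished by the existence of a dual optimum: the saddle-point inequality $\calL(x^\star,y^\star)\le\calL(x,y^\star)$, combined with $\calL(x^\star,y^\star)=f_\calP(x^\star)$ (using $Ax^\star=b$), gives $f_\calP(x^\star)\le f_\calP(x)+\scp{y^\star}{Ax-b}$ for all $x\in\calX$; for $x\in\calP$, Cauchy--Schwarz yields the claim with $\beta=\|y^\star\|$. For part (c), if $\{A_n\},\{B_n\}$ are bounded then $C_n(0)$ and $\hat C_n$ are bounded and $T_n=\Theta(n^2)$, so~\eqref{eq:LinearConstraint:a} gives $f_\calP(x^e_n)-f_\calP(x^\star)\le O(1/n^2)$. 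Next,~\eqref{eq:LinearConstraint:b} reads $\|Ax^e_n-b\|\le\sqrt{2\max\{f_\calP(x^\star)-f_\calP(x^e_n),0\}/(\gamma T_n)}+O(1/n^2)$, while part (b) (applied to $x^e_n\in\calP$) bounds $\max\{f_\calP(x^\star)-f_\calP(x^e_n),0\}\le\beta\|Ax^e_n-b\|$; hence $\|Ax^e_n-b\|\le\sqrt{c_1\|Ax^e_n-b\|}+c_2$ with $c_1,c_2=O(1/n^2)$, which forces $\|Ax^e_n-b\|\le c_1+2c_2=O(1/n^2)$. Feeding this back through part (b) gives $f_\calP(x^\star)-f_\calP(x^e_n)\le\beta\|Ax^e_n-b\|=O(1/n^2)$, and combining the two one-sided estimates yields $f_\calP(x^e_n)-f_\calP(x^\star)=O(1/n^2)$.

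I expect the only real obstacle to be eq.~\eqref{eq:LinearConstraint:b}: one has to recognize that the correct test direction is $y$ parallel to the residual $Ax^e_n-b$, keep the scaling $\lambda$ as a free parameter, bound $C_n(y)$ tightly enough that the leftover constant collapses precisely to $\hat C_n$, and then solve the resulting one-variable quadratic --- all while carrying the $\max\{\cdot,0\}$ term that appears because the sign of $f_\calP(x^\star)-f_\calP(x^e_n)$ is unknown. Everything else is essentially bookkeeping: part (a)'s first inequality, part (b), and part (c) are short consequences of Theorem~\ref{th:PPA}, the saddle-point inequality, and a self-referential quadratic estimate, respectively.
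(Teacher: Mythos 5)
Your argument is correct, and for parts (a) and (c) it is essentially the paper's proof: the master inequality $f_\calP(x^e_n)-f_\calP(x^\star)+\scp{y}{Ax^e_n-b}\le C_n(y)/(2\gamma T_n)$ from Theorem~\ref{th:PPA} with strong duality, the choice $y=0$ for~\eqref{eq:LinearConstraint:a}, the test vector $y=\lambda\,r/\norm{r}$ with the bound $C_n(y)\le\lambda^2+2\lambda(\norm{y_0}+A_n)+C_n(0)$ and a one-variable quadratic optimization for~\eqref{eq:LinearConstraint:b} (you maximize the concave left-hand side in $\lambda$, the paper minimizes $z/\alpha+\beta/z$ over $z>0$; the two computations are mirror images and give the same constant $\hat C_n=\norm{y_0}+A_n+\sqrt{C_n(0)}$), and the self-referential estimate $r_n\le\sqrt{c_1 r_n}+c_2\Rightarrow r_n\le c_1+2c_2$ for part (c). Where you genuinely diverge is part (b). You invoke the saddle-point inequality $\calL(x^\star,y^\star)\le\calL(x,y^\star)$ together with $Ax^\star=b$ and Cauchy--Schwarz to get $f_\calP(x^\star)-f_\calP(x)\le\norm{y^\star}\,\norm{Ax-b}$, i.e.\ $\beta=\norm{y^\star}$; this is valid under the paper's standing assumption that a saddle point $(x^\star,y^\star)$ exists (indeed $y^\star$ already enters $\hat C_n$ in part (a)), and it is much shorter. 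The paper instead proves a Hoffman-type geometric result (Theorem~\ref{th:polytopeAngle}: ${\tt dist}(x,\calP\cap\calH)\le\beta\,{\tt dist}(x,\calH)$ for $x\in\calP$) and combines it with the boundedness of $\nabla f$ on $\calP$ and convexity. What the paper's longer route buys is independence from dual attainment: its $\beta$ is an intrinsic constant determined by the polytope, the constraint matrix, and $\sup_{\calP}\norm{\nabla f}$, so the error bound holds even without exhibiting a Lagrange multiplier, and the projection lemma is of independent interest; your route buys brevity and an explicit constant $\norm{y^\star}$, at the price of leaning on the assumed existence of the dual optimum.
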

Note that part (a) is derived directly from Theorem~\ref{th:PPA}. 
In part (b) we crucially exploit the facts that $\calP$ is a polytope, the feasible set $\{x\in\calP:Ax-b=0\}$ is non-empty,
and function $f$ has a bounded gradient on $\calP$. Part (c) is an easy consequence of parts (a) and (b).

\subsection{Overall algorithm}
In this section we fix $n$, and denote $\bar y=\bar y_{n-1}$ and $\varepsilon=\varepsilon_n$.
In order to implement Algorithm~\ref{alg:FISTA} for solving the
saddle-point problem~\eqref{eq:saddle}, we need to specify how to solve
subproblem~\eqref{eq:xy:update} for vector $\bar y$ up to
accuracy $\varepsilon$:
\begin{eqnarray}\label{eq:GNALKSFA}
      (x_n,y_n) &\approx_\varepsilon & \argmin_{(x,y) \in \calX \times \calY}
      \calF_{\gamma, \bar y}(x) - H_{\gamma, \bar y}(y)
\end{eqnarray}
We will first compute $x_n\approx_\varepsilon\argmin_{x\in\calX}\calF_{\gamma,\bar y}(x)$ by invoking Algorithm~\ref{alg:FWeps} for function
$\calF_{\gamma,\bar y}$, and then solve for $y_n$  via its proximal map:
$$
x_n={\tt FW}_\varepsilon(x_{n-1};\calF_{\gamma,\bar y})\qquad\qquad 
y_n=\argmax_{y\in\calY}\calL_{\gamma,\bar y}(x_n,y)=\prox_{\gamma h^*}(\bar y + \gamma K x_n)
$$
 (As we will see later, function $\calF_{\gamma,  \bar y}$ has the form $\calF_{\gamma,  \bar y}(x)=g(x)+\delta_\calP(x)$
for some differentiable convex function $g$ with a $L_g$-Lipschitz continuous gradient, and so Algorithm~\ref{alg:FWeps}
is indeed applicable). 
By construction, vector $x_n$ satisfies ${\tt gap}^{\tt FW}(x_n;\calF_{\gamma,\bar y})\le \varepsilon$.
The following lemma thus implies that the pair $(x_n, y_n)$ indeed solves problem~\eqref{eq:GNALKSFA}.
\begin{lemma}\label{lemma:FWgap2}
Suppose that $\hat x\in\calP$ and $\hat y=\argmax_y \calL_{\gamma,\bar y}(\hat x,y)$.
Then $H_{\gamma,\bar y}(\hat y)\ge \calL_{\gamma,\bar y}(\hat x,\hat y)-\varepsilon=\calF_{\gamma,\bar y}(\hat x)-\varepsilon$ where $\varepsilon={\tt gap}^{\tt FW}(\hat x;\calF_{\gamma,\bar y})$.
\end{lemma}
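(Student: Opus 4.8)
The plan is to reduce the claimed inequality to the definition of the Frank--Wolfe gap by a single application of convexity. First I would make the structure of $\calF_{\gamma,\bar y}$ explicit: writing $f_\calP=f+\delta_\calP$ and carrying out the maximization over $y$ in $\calF_{\gamma,\bar y}(x)=\max_{y}\calL_{\gamma,\bar y}(x,y)$, one gets
$$\calF_{\gamma,\bar y}(x)=g(x)+\delta_\calP(x),\qquad g(x):=f(x)+\psi_{\bar y}(Kx),$$
where $\psi_{\bar y}(z):=\max_{y\in\calY}\big[\scp{z}{y}-h^*(y)-\tfrac1{2\gamma}\norm{y-\bar y}^2\big]$ is a smoothed conjugate of $h^*$; this $g$ is precisely the one appearing in ${\tt gap}^{\tt FW}(\hat x;\calF_{\gamma,\bar y})$. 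Next I would record that $\hat y$ attains the maximum defining $\psi_{\bar y}$ at $z=K\hat x$, since by definition $\hat y=\argmax_y\calL_{\gamma,\bar y}(\hat x,y)=\prox_{\gamma h^*}(\bar y+\gamma K\hat x)$. As $\psi_{\bar y}$ is a supremum of affine functions of $z$ attained at $z=K\hat x$ by the affine map $z\mapsto\scp{z}{\hat y}-h^*(\hat y)-\tfrac1{2\gamma}\norm{\hat y-\bar y}^2$, we have $\hat y\in\partial\psi_{\bar y}(K\hat x)$; and since $\psi_{\bar y}$ is differentiable (the proximal term makes the inner maximizer unique — the Moreau-type fact recalled in Appendix~\ref{sec:Moreau}), this gives $\nabla\psi_{\bar y}(K\hat x)=\hat y$, whence by the chain rule $\nabla g(\hat x)=\nabla f(\hat x)+K^*\hat y$.

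Then I would compute the duality gap $\calF_{\gamma,\bar y}(\hat x)-H_{\gamma,\bar y}(\hat y)$ directly. Because $\hat y$ maximizes $\calL_{\gamma,\bar y}(\hat x,\cdot)$, we have $\calF_{\gamma,\bar y}(\hat x)=\calL_{\gamma,\bar y}(\hat x,\hat y)$, which is already the stated equality. On the other hand, expanding $f_\calP=f+\delta_\calP$,
$$H_{\gamma,\bar y}(\hat y)=\min_{x\in\calP}\big[f(x)+\scp{Kx}{\hat y}\big]-h^*(\hat y)-\tfrac1{2\gamma}\norm{\hat y-\bar y}^2,$$
so the terms $-h^*(\hat y)$ and $-\tfrac1{2\gamma}\norm{\hat y-\bar y}^2$ cancel against the same terms in $\calL_{\gamma,\bar y}(\hat x,\hat y)$, leaving $\calF_{\gamma,\bar y}(\hat x)-H_{\gamma,\bar y}(\hat y)=\phi(\hat x)-\min_{x\in\calP}\phi(x)$ with $\phi(x):=f(x)+\scp{Kx}{\hat y}$. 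Now $\phi$ is convex and differentiable with $\nabla\phi(\hat x)=\nabla f(\hat x)+K^*\hat y=\nabla g(\hat x)$, so the subgradient inequality gives $\phi(\hat x)-\phi(x)\le\scp{\nabla g(\hat x)}{\hat x-x}$ for every $x\in\calP$. Maximizing over $x\in\calP$ and using $s=\lmo_\calP(\nabla g(\hat x))\in\argmin_{x\in\calP}\scp{\nabla g(\hat x)}{x}$ yields $\calF_{\gamma,\bar y}(\hat x)-H_{\gamma,\bar y}(\hat y)\le\scp{\nabla g(\hat x)}{\hat x-s}={\tt gap}^{\tt FW}(\hat x;\calF_{\gamma,\bar y})=\varepsilon$, which is the claim.

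The only delicate step is the gradient identity $\nabla g(\hat x)=\nabla f(\hat x)+K^*\hat y$; everything else is bookkeeping and one use of convexity of $\phi$. That identity rests on differentiability of the smoothed conjugate $\psi_{\bar y}$ and the Danskin-type formula $\nabla\psi_{\bar y}(z)=\prox_{\gamma h^*}(\bar y+\gamma z)$, both consequences of the strong concavity in $y$ of the inner maximization, as discussed in Appendix~\ref{sec:Moreau}. I expect this to be the main (indeed the only) obstacle.
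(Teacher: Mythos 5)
Your proof is correct and takes essentially the same route as the paper's: both arguments rest on the observation that, since $\hat y$ exactly maximizes $\calL_{\gamma,\bar y}(\hat x,\cdot)$, the gradient of the smooth part of $\calF_{\gamma,\bar y}$ at $\hat x$ equals $\nabla f(\hat x)+K^*\hat y$, so the Frank--Wolfe gap of $\calF_{\gamma,\bar y}$ at $\hat x$ coincides with that of $x\mapsto\calL_{\gamma,\bar y}(x,\hat y)$, after which the elementary convexity bound (the first inequality of Lemma~\ref{lemma:FWgap1}, which you simply write out rather than cite) yields the claim. The only difference is cosmetic: the paper gets the gradient identity from the fact that $\calL_{\gamma,\bar y}(\cdot,\hat y)$ minorizes the smooth part and touches it at $\hat x$, whereas you compute it via the Danskin/Moreau gradient formula of Lemma~\ref{lemma:h-gamma}.
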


Next, we derive an explicit expression for function $\calF_{\gamma,\bar y}$ 
(which is needed for implementing the call $x_n={\tt FW}_\varepsilon(x_{n-1};\calF_{\gamma,\bar y})$),
and formulate sufficient conditions on $\calL$ that will guarantee that $\calF_{\gamma,\bar y}\in\mathfrak{F}_{\tt weak}$
(this would yield a good bound on the complexity of Algorithm~\ref{alg:FISTA}).

Recall that the function $\calF_{\gamma,
  \bar y}(x)$ is given by
\[
\calF_{\gamma, \bar y}(x) = \max_{y\in\calY} \calL_{\gamma,\bar
  y}(x,y) = \max_{y\in\calY} \scp{Kx}{y} + f_{\calP}(x)
-h^\ast(y)-\frac{1}{2\gamma}\norm{y-\bar y}^2.
\]
We now show that it can be
written as
\[
\calF_{\gamma, \bar y}(x) = f_{\calP}(x) + h_{\gamma,\bar y}(Kx)
\]
with $h_{\gamma,\bar y}(Kx)$ a differentiable function with Lipschitz
continuous gradient.
\begin{lemma}\label{lemma:h-gamma}
  Let the function $h_{\gamma,\bar y}(Kx)$ be defined as
  \[
  h_{\gamma,\bar y}(Kx) = \max_{y\in\calY} \scp{Kx}{y}
  -h^\ast(y)-\frac{1}{2\gamma}\norm{y-\bar y}^2.
  \]
  We have the following two representations:
  \begin{eqnarray}\label{eq:h-mor}
    h_{\gamma,\bar y}(Kx) &=& \frac{\gamma}{2}\norm{Kx}^2 +
    \scp{Kx}{\bar y} - m_{h^*}^\gamma(\bar y + \gamma Kx),\\
    &=& m_h^{\gamma^{-1}}\left(\gamma^{-1} \bar y + Kx\right) - \frac{1}{2\gamma}\norm{\bar y}^2.
  \end{eqnarray}
  where $m_{h^*}^\gamma$ is the Moreau envelope of $h^*$ with
  smoothing parameter $\gamma$ and $m_h^{\gamma^{-1}}$ is the Moreau
  envelope of $h$ with smoothing parameter $\gamma^{-1}$. \\ Moreover,
  the function $h_{\gamma,\bar y}(Kx)$ is convex, continuously
  differentiable in $x$ with a $\gamma L_K^2$-Lipschitz continuous
  gradient given by
  \begin{eqnarray}\label{eq:h-grad}
    \nabla_x h_{\gamma,\bar y}(Kx) &=& K^*\prox_{\gamma h^*}\left(\bar
    y + \gamma Kx\right),\\ &=& \gamma K^* \left(\gamma^{-1}\bar y +
    Kx - \prox_{\gamma^{-1}h}\left(\gamma^{-1}\bar y + Kx\right)\right).
  \end{eqnarray}
\end{lemma}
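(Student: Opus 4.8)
The plan is to extract the representation~\eqref{eq:h-mor} by completing the square in $y$, then pass to the second representation via a conjugation identity for Moreau envelopes, and finally read off convexity, smoothness and the two gradient formulas from that second representation together with Moreau's decomposition. First I would write
\[
h_{\gamma,\bar y}(Kx)=-\min_{y\in\calY}\Big[h^*(y)+\tfrac{1}{2\gamma}\|y-\bar y\|^2-\langle Kx,y\rangle\Big]
\]
and complete the square using
\[
\tfrac{1}{2\gamma}\|y-\bar y\|^2-\langle Kx,y\rangle=\tfrac{1}{2\gamma}\|y-(\bar y+\gamma Kx)\|^2-\langle Kx,\bar y\rangle-\tfrac{\gamma}{2}\|Kx\|^2 .
\]
Substituting and recognizing $\min_{y}\big[h^*(y)+\tfrac{1}{2\gamma}\|y-(\bar y+\gamma Kx)\|^2\big]=m_{h^*}^\gamma(\bar y+\gamma Kx)$ gives~\eqref{eq:h-mor} directly.

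Next I would invoke the conjugation identity $m_{h^*}^\gamma(z)=\tfrac{1}{2\gamma}\|z\|^2-m_h^{\gamma^{-1}}(\gamma^{-1}z)$, valid for any closed proper convex $h$; it follows from the fact that the conjugate of a sum is the infimal convolution of the conjugates together with $h^{**}=h$, or can be quoted from Appendix~\ref{sec:Moreau}. Taking $z=\bar y+\gamma Kx$, so that $\gamma^{-1}z=\gamma^{-1}\bar y+Kx$, and plugging into~\eqref{eq:h-mor}, the expansion $\tfrac{1}{2\gamma}\|\bar y+\gamma Kx\|^2=\tfrac{1}{2\gamma}\|\bar y\|^2+\langle\bar y,Kx\rangle+\tfrac{\gamma}{2}\|Kx\|^2$ makes all the $\|Kx\|^2$ and $\langle Kx,\bar y\rangle$ terms cancel, leaving $h_{\gamma,\bar y}(Kx)=m_h^{\gamma^{-1}}(\gamma^{-1}\bar y+Kx)-\tfrac{1}{2\gamma}\|\bar y\|^2$, the second representation.

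From this second representation the remaining claims are essentially automatic. The Moreau envelope $m_h^{\gamma^{-1}}$ is convex and continuously differentiable with a $\gamma$-Lipschitz gradient $\nabla m_h^{\gamma^{-1}}(u)=\gamma\big(u-\prox_{\gamma^{-1}h}(u)\big)$; composing with the affine map $x\mapsto\gamma^{-1}\bar y+Kx$ shows $x\mapsto h_{\gamma,\bar y}(Kx)$ is convex and $C^1$ with $\nabla_x h_{\gamma,\bar y}(Kx)=K^*\nabla m_h^{\gamma^{-1}}(\gamma^{-1}\bar y+Kx)$, whose Lipschitz constant is at most $\|K\|\cdot\gamma\cdot\|K\|=\gamma L_K^2$. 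This already yields the second expression in~\eqref{eq:h-grad}. For the first expression I would apply Moreau's decomposition $u=\prox_{\gamma^{-1}h}(u)+\gamma^{-1}\prox_{\gamma h^*}(\gamma u)$ with $u=\gamma^{-1}\bar y+Kx$ (so $\gamma u=\bar y+\gamma Kx$), which turns $\gamma\big(u-\prox_{\gamma^{-1}h}(u)\big)$ into $\prox_{\gamma h^*}(\bar y+\gamma Kx)$.

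The only step needing genuine care is the parameter bookkeeping: keeping straight that the Moreau envelope of $h^*$ at parameter $\gamma$ becomes the Moreau envelope of $h$ at parameter $\gamma^{-1}$ evaluated at the rescaled point $\gamma^{-1}z$, and matching the smoothing-parameter convention used for $m_\phi^\tau$ with the one used for $\prox_{\tau\phi}$ when applying Moreau's identity. Everything else is the completing-the-square computation and a routine composition with an affine map. As an alternative route, differentiability and the first gradient formula could be obtained directly from Danskin's theorem, since the maximand in the definition of $h_{\gamma,\bar y}$ is strongly concave in $y$ with unique maximizer $\prox_{\gamma h^*}(\bar y+\gamma Kx)$; but I would still use the two representations above to produce the stated closed forms.
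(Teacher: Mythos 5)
Your proposal is correct and follows essentially the same route as the paper: completing the square to obtain \eqref{eq:h-mor}, the conjugation identity \eqref{eq:mor-conj} for the second representation, the Moreau-envelope gradient formula plus Moreau's identity \eqref{eq:mor-id} for the two expressions in \eqref{eq:h-grad}, and a $\gamma L_K^2$ bound from composing a $\gamma$-Lipschitz gradient (equivalently, nonexpansiveness of the prox) with $K$. The only cosmetic difference is that you read the gradient off the $m_h^{\gamma^{-1}}$ representation and convert to the $\prox_{\gamma h^*}$ form via Moreau's identity, whereas the paper starts from the $m_{h^*}^\gamma$ representation and goes the other way; the bookkeeping in both directions checks out.
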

In practical applications, we will mostly be interested in the situation
where $h^*$ is a linear constraint.
\begin{lemma}\label{lemma:constraint}
  Let $\calX=\calY=\mathbb R^\ell$, and let $h^*(y) = \delta_{S}(y)$ be a linear constraint of the form $S =
  \{y \in \mathbb R^\ell: Cy=d\}$ for some matrix $C$ with full row rank and
  vector $d$. Then, the function $h_{\gamma,\bar y}(Kx)$ is a
  quadratic function of the form
  \[
  h_{\gamma,\bar y}(Kx) = \frac{\gamma}{2}\norm{Kx}^2 + \scp{Kx}{\bar y} -
  \frac{1}{2\gamma}\norm{C^*(CC^*)^{-1}(d-C(\bar
    y + \gamma Kx))}^2.
  \]
  Moreover, its gradient is a linear map given by
  \[
  \nabla_x h_{\gamma,\bar y}(Kx)  = 
  K^*\left( \bar y + \gamma Kx + C^*(CC^*)^{-1}(d-C(\bar y + \gamma
  Kx))\right).
  \]
\end{lemma}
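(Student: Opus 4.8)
The plan is to avoid recomputing the inner maximization from scratch and instead specialize the Moreau-envelope representations already established in Lemma~\ref{lemma:h-gamma}. Since here $h^\ast=\delta_S$ with $S=\{y:Cy=d\}$ an affine subspace, the Moreau envelope is $m_{h^\ast}^\gamma(z)=\min_{y}\{\frac{1}{2\gamma}\|y-z\|^2+\delta_S(y)\}=\frac{1}{2\gamma}\,\mathrm{dist}(z,S)^2=\frac{1}{2\gamma}\|z-\proj_S(z)\|^2$, and the corresponding proximal map is the orthogonal projection, $\prox_{\gamma h^\ast}(z)=\proj_S(z)$. So the entire lemma reduces to producing closed forms for $\proj_S(z)$ and $\|z-\proj_S(z)\|^2$ at $z=\bar y+\gamma Kx$, and then substituting into the first representation of $h_{\gamma,\bar y}$ and into the gradient formula $\nabla_x h_{\gamma,\bar y}(Kx)=K^\ast\prox_{\gamma h^\ast}(\bar y+\gamma Kx)$ from Lemma~\ref{lemma:h-gamma}.

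To get $\proj_S(z)$, I would write it as the minimizer of $\tfrac12\|y-z\|^2$ subject to $Cy=d$ and use the (here sufficient) first-order conditions $y-z=-C^\ast\lambda$, $Cy=d$. Eliminating $y$ gives $CC^\ast\lambda=Cz-d$; because $C$ has full row rank, $CC^\ast$ is invertible, so $\lambda=(CC^\ast)^{-1}(Cz-d)$ and
\[
\proj_S(z)=z-C^\ast(CC^\ast)^{-1}(Cz-d)=z+C^\ast(CC^\ast)^{-1}(d-Cz),
\]
hence $z-\proj_S(z)=C^\ast(CC^\ast)^{-1}(Cz-d)$, a vector whose norm is unchanged when $Cz-d$ is replaced by $d-Cz$. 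Substituting $z=\bar y+\gamma Kx$ into $h_{\gamma,\bar y}(Kx)=\frac{\gamma}{2}\|Kx\|^2+\langle Kx,\bar y\rangle-m_{h^\ast}^\gamma(z)$ and into $K^\ast\proj_S(z)$ then yields exactly the two displayed formulas; in particular the resulting gradient map is affine in $x$, which confirms that $h_{\gamma,\bar y}(Kx)$ is quadratic.

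I do not expect a real obstacle here: the only genuinely load-bearing point is that the full-row-rank hypothesis on $C$ is what makes $CC^\ast$ invertible (and hence $\proj_S$ well defined by the formula above), and the only place care is needed is the purely cosmetic sign bookkeeping that puts the distance term in the form $d-C(\bar y+\gamma Kx)$ rather than its negative. The algebraic identity $\|C^\ast(CC^\ast)^{-1}v\|^2=v^\ast(CC^\ast)^{-1}v$ (using symmetry of $(CC^\ast)^{-1}$) can be invoked if one prefers to present the distance term as a quadratic form, but it is not needed for the statement as written.
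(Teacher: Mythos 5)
Your proposal is correct and follows essentially the same route as the paper: both specialize the first Moreau-envelope representation and the gradient formula $\nabla_x h_{\gamma,\bar y}(Kx)=K^*\prox_{\gamma h^*}(\bar y+\gamma Kx)$ from Lemma~\ref{lemma:h-gamma}, use that $\prox_{\gamma h^*}$ is the orthogonal projection onto $S$ with the closed form $z+C^*(CC^*)^{-1}(d-Cz)$ (which the paper simply cites as well known, while you derive it from the first-order conditions), and then substitute $z=\bar y+\gamma Kx$. The only difference is cosmetic: you derive the projection formula and note the sign invariance of the norm term, which the paper leaves implicit.
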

We therefore obtain the following sufficient condition where the
functions $\calF_{\gamma, \bar y}(x)$ for any $\bar y \in \calY$ and
any $\gamma > 0$ fall into the class $\mathfrak{F}_{\tt weak}$.
\begin{lemma}\label{lemma:F-weak}
  Let $f(x)$ be a quadratic function of the form $f(x) =
  \frac12\scp{Qx}{x} + \scp{q}{x}$, with a symmetric positive
  semidefinite matrix $Q$ and vector $q$. Furthermore, let $h^*$
  satisfy the condition of Lemma~\ref{lemma:constraint}. Then
  $\{\calF_{\gamma, \bar y}\:|\:\bar y\in\calY\} \subseteq \mathfrak{F}_{\tt weak}$.
\end{lemma}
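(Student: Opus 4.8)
The plan is to combine the decomposition $\calF_{\gamma,\bar y}(x)=f_\calP(x)+h_{\gamma,\bar y}(Kx)$ with the explicit quadratic formula for $h_{\gamma,\bar y}$ from Lemma~\ref{lemma:constraint}, recognize the result as a convex quadratic plus $\delta_\calP$, factor the Hessian of that quadratic as $E^\ast E$, and match the template $g(Ex)+\scp{b}{x}+\delta_\calP(x)$ that defines $\mathfrak{F}_{\tt weak}$, taking $g(z)=\tfrac12\|z\|^2$.

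Concretely, using $f(x)=\tfrac12\scp{Qx}{x}+\scp{q}{x}$ together with Lemma~\ref{lemma:constraint} I would write
\[
\calF_{\gamma,\bar y}(x)=\tfrac12\scp{Qx}{x}+\tfrac{\gamma}{2}\|Kx\|^2-\tfrac{1}{2\gamma}\|C^\ast(CC^\ast)^{-1}(d-C\bar y-\gamma CKx)\|^2+\scp{q}{x}+\scp{Kx}{\bar y}+\delta_\calP(x).
\]
Expanding the square, the first three terms constitute a quadratic in $x$; collecting all terms of degree $\le 1$ one gets $\calF_{\gamma,\bar y}(x)=\tfrac12\scp{Hx}{x}+\scp{b}{x}+c+\delta_\calP(x)$ with $H$ a symmetric matrix, $b\in\calX$ depending on $\bar y,q,d$, and $c$ an additive constant. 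A short computation then identifies $H=Q+\gamma K^\ast(I-\Pi)K$, where $\Pi=C^\ast(CC^\ast)^{-1}C$ ($CC^\ast$ being invertible since $C$ has full row rank) is the orthogonal projection onto $\mathrm{range}(C^\ast)=(\ker C)^\perp$, and hence $I-\Pi$ is the orthogonal projection onto $\ker C$.

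The key step is $H\succeq 0$. This holds because $I-\Pi\succeq 0$ gives $K^\ast(I-\Pi)K\succeq 0$, while $Q\succeq 0$ by hypothesis. (Equivalently, Lemma~\ref{lemma:h-gamma} already asserts that $x\mapsto h_{\gamma,\bar y}(Kx)$ is convex; being quadratic with Hessian $\gamma K^\ast(I-\Pi)K$, that Hessian is PSD, and adding the PSD Hessian $Q$ of $f$ yields $H\succeq 0$.) Given $H\succeq 0$, I would factor $H=E^\ast E$ — for instance by stacking $Q^{1/2}$ on top of $\sqrt{\gamma}\,(I-\Pi)K$, using $(I-\Pi)^{1/2}=I-\Pi$, or by taking any symmetric square root of $H$ — and set $g(z)=\tfrac12\|z\|^2$, which is $1$-strongly convex with $1$-Lipschitz gradient. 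Then $\calF_{\gamma,\bar y}(x)=g(Ex)+\scp{b}{x}+\delta_\calP(x)+c$, and since the additive constant $c$ affects neither $\calF^\downarrow_{\gamma,\bar y}$ nor any ${\tt FWstep}$ (these depend only on $\nabla g$) it may be dropped, so $\calF_{\gamma,\bar y}\in\mathfrak{F}_{\tt weak}$ with parameter $(\mu_g,L_g,E,b)=(1,1,E,b)$. Note that $E$ and $(\mu_g,L_g)$ do not depend on $\bar y$ — only $b$ does — so in fact the entire family $\{\calF_{\gamma,\bar y}:\bar y\in\calY\}$ lies in $\mathfrak{F}_{\tt weak}$, as claimed.

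The main obstacle is the positivity $H\succeq 0$: the projection/Moreau-envelope term of Lemma~\ref{lemma:constraint} contributes a \emph{concave} quadratic, namely $-\tfrac{\gamma}{2}\scp{Kx}{\Pi Kx}$, and one has to see that it is exactly dominated by the convex $\tfrac{\gamma}{2}\|Kx\|^2$ term — the two cancel on $\mathrm{range}(C^\ast)$ and leave $\tfrac{\gamma}{2}$ times the squared norm of the $\ker C$-component of $Kx$, which with $Q\succeq 0$ is PSD. Everything else — expanding the square, sorting terms into $b$ and $c$, taking a square root of $H$, and reading off the $\mathfrak{F}_{\tt weak}$ template — is routine bookkeeping; and if one simply invokes the convexity statement already established in Lemma~\ref{lemma:h-gamma}, even this step becomes immediate.
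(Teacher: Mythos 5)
Your proposal is correct and follows essentially the same route as the paper: write $\calF_{\gamma,\bar y}$ as a convex quadratic plus $\delta_\calP$ using Lemma~\ref{lemma:constraint}, complete the squares, drop constants, and read off the $\mathfrak{F}_{\tt weak}$ form $\frac12\norm{Ex}^2+\scp{b}{x}+\delta_\calP(x)$ with $E$ independent of $\bar y$. You merely spell out the details the paper leaves implicit (the explicit Hessian $Q+\gamma K^\ast(I-\Pi)K$, its positive semidefiniteness, and the factorization $H=E^\ast E$), all of which check out.
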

\begin{proof}
  First note that both $f(x)$ and $ h_{\gamma,\bar y}(Kx)$ are
  quadratic functions. By completing the squares and ignoring constant
  terms, it follows that $\calF_{\gamma, \bar y}(x)$ can be written as
  $\calF_{\gamma, \bar y}(x) = \frac12 \norm{Ex}^2 + \scp{b}{x}
  +\delta_{\calP}(x)$ for some matrix $E$ and vector $b$, where matrix $E$ may depend on $\gamma$ but not on $\bar y$.
\end{proof}

It remains to specify how to set the sequence $\{\varepsilon_n\}$. We
want sequences $\{A_n\}$ and $\{B_n\}$ to be bounded; this can be
achieved by setting $\varepsilon_n=\Theta(n^{-4-\delta})$ for
some $\delta>0$.  With these choices, we obtain the main result of
this section:
\begin{theorem}
Suppose that function $\calL$ satisfies the precondition of
Lemma~\ref{lemma:F-weak}, function $-H(y)$ is coercive, and procedure
${\tt FWstep}$ has a linear convergence rate on $\mathfrak{F}_{\tt weak}$
(e.g.\ it is one step of AFW or DiCG). Then Algorithm~\ref{alg:FISTA}
makes $O(n\log n)$ calls to ${\tt FWstep}$ during the first $n$
iterations, and obtains iterates $x_n^e$ and $y_n^e$ satisfying
 $H(y^\star)-H(y^e_n)=O(1/n^2)$.
Furthermore, in the case of the problem in eq.~\eqref{eq:saddle:Axb}
the iterates satisfy $f_\calP(x^e_n)-f_\calP(x^\star)=O(1/n^2)$
and $||Ax_n^e-b||=O(1/n^2)$.
\end{theorem}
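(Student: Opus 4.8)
The plan is to combine the ingredients already assembled — Lemma~\ref{lemma:F-weak} (membership in $\mathfrak{F}_{\tt weak}$), Proposition~\ref{prop:FWlogn} (number of inner ${\tt FWstep}$'s per outer iteration), Corollary~\ref{cor:PPA:orig} (dual rate and boundedness of the outer iterates), and Theorem~\ref{th:LinearConstraint} (primal and infeasibility rates) — with the inexactness schedule fixed at $\varepsilon_n=\Theta(n^{-4-\delta})$ as announced. The first, routine, step is to check that with $t_n=\Theta(n)$ this choice makes $\{A_n\}$ and $\{B_n\}$ bounded: the general terms are $t_k\sqrt{2\gamma\varepsilon_k}=\Theta(k^{-1-\delta/2})$ and $\gamma t_k^2\varepsilon_k=\Theta(k^{-2-\delta})$, both summable. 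Since $-H$ is coercive, Corollary~\ref{cor:PPA:orig}(c) then gives that $\{y_n\}$ is bounded and $\|y_n-y_{n-1}\|=O(1/t_n)$, and since $\tfrac{t_n-1}{t_{n+1}}$ is bounded for the admissible choices of $\{t_n\}$, the extrapolated sequence $\{\bar y_n\}$ from~\eqref{eq:bary:update} is bounded as well, say $\|\bar y_n\|\le\rho$ for all $n$.

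Next I would bound the cost of the $n$-th inner solve $x_n={\tt FW}_{\varepsilon_n}(x_{n-1};\calF_{\gamma,\bar y_{n-1}})$. By Lemma~\ref{lemma:F-weak} (or, for the problem~\eqref{eq:saddle:Axb} with quadratic $f$, by the completely analogous computation), every $\calF_{\gamma,\bar y}$ has the form $\tfrac12\|Ex\|^2+\langle b,x\rangle+\delta_\calP(x)$ with $E$ independent of $\bar y$ and $b$ depending affinely on $\bar y$; moreover its smooth part $f(x)+h_{\gamma,\bar y}(Kx)$ has a $(L_f+\gamma L_K^2)$-Lipschitz gradient by Lemma~\ref{lemma:h-gamma}, so Algorithm~\ref{alg:FWeps} is indeed applicable. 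Hence the family $\tilde{\mathfrak{F}}=\{\calF_{\gamma,\bar y}:\|\bar y\|\le\rho\}$ lies in $\mathfrak{F}_{\tt weak}$, its parameter vectors range over a compact set (only $b$ varies, over the bounded affine image of $\{\|\bar y\|\le\rho\}$), and $\sup_{\|\bar y\|\le\rho,\,x\in\calP}\calF^\downarrow_{\gamma,\bar y}(x)<\infty$ by joint continuity on a compact set. Proposition~\ref{prop:FWlogn}(b) therefore applies with a constant that does not depend on $n$ — note $x_{n-1}\in\calP$, so its objective gap is covered by the same uniform bound — and yields $N_{\tt good}=O(\log(1/\varepsilon_n))=O(\log n)$ good ${\tt FWstep}$'s during outer iteration $n$.

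The one point that needs care is that the bad-step accounting is cumulative rather than per-iteration: the whole execution is a single stream of ${\tt FWstep}$ applications to objectives that change from time to time but all share the fixed polytope $\calP$, so, as observed in Section~\ref{sec:FW}, the bad-step bounds are stable under changing the objective and $N_{\tt bad}\le R_0+R_1N_{\tt good}$ holds for the running totals. Summing $O(\log n')$ over $n'\le n$ gives $N_{\tt good}=O(n\log n)$ in total, hence $N_{\tt bad}=O(n\log n)$, so the first $n$ outer iterations make $O(n\log n)$ calls to ${\tt FWstep}$ (equivalently, to $\lmo$). For the accuracy of the outer iterates, ${\tt gap}^{\tt FW}(x_n;\calF_{\gamma,\bar y_{n-1}})\le\varepsilon_n$ by construction of Algorithm~\ref{alg:FWeps} and $y_n=\prox_{\gamma h^*}(\bar y_{n-1}+\gamma Kx_n)$, so Lemma~\ref{lemma:FWgap2} shows $(x_n,y_n)$ is a genuine $\varepsilon_n$-approximate solution of~\eqref{eq:GNALKSFA}; thus Algorithm~\ref{alg:FISTA} is being run with an admissible inexactness schedule, and boundedness of $\{A_n\},\{B_n\}$ together with Corollary~\ref{cor:PPA:orig}(b) gives $H(y^\star)-H(y^e_n)=O(1/T_n)=O(1/n^2)$. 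In the case of~\eqref{eq:saddle:Axb}, the same boundedness lets me invoke Theorem~\ref{th:LinearConstraint}(c) verbatim for $f_\calP(x^e_n)-f_\calP(x^\star)=O(1/n^2)$ and $\|Ax^e_n-b\|=O(1/n^2)$.

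The genuinely nontrivial part throughout is the uniformity of the inner-loop estimate — verifying the three hypotheses of Proposition~\ref{prop:FWlogn}(b) for the entire $\bar y$-family at once, which is exactly what forces the use of coercivity of $-H$ (to bound $\{\bar y_n\}$ and hence keep the parameter set compact and the initial objective gaps uniformly bounded) — together with the observation that the bad-step bound carries over across changes of the objective because $\calP$ is held fixed; once these are in place, the rest is bookkeeping on the scalar sequences $\{t_n\}$ and $\{\varepsilon_n\}$.
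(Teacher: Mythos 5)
Your proposal is correct and follows essentially the same route as the paper's proof: membership of the subproblems in $\mathfrak{F}_{\tt weak}$ via Lemma~\ref{lemma:F-weak}, boundedness of $\{\bar y_n\}$ from coercivity and Corollary~\ref{cor:PPA:orig}(c), the per-iteration $O(\log n)$ good-step bound from Proposition~\ref{prop:FWlogn}(b) with cumulative bad-step accounting enabled by warm-starting on the fixed polytope, and the rates from Corollary~\ref{cor:PPA:orig} and Theorem~\ref{th:LinearConstraint}(c). You simply spell out in more detail the summability of $\{A_n\},\{B_n\}$ and the compactness/uniformity hypotheses that the paper states more tersely.
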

\begin{proof}
By Lemma~\ref{lemma:F-weak}, all functions $\calF_{\gamma,\bar
  y_{n-1}}$ encountered during the algorithm belong to
$\mathfrak{F}_{\tt weak}$. Furthermore, vectors $\bar y_{n-1}$ for
$n\ge 1$ belong to a compact set, since the sequence $\{y_n\}$ (and
thus the sequence $\{\bar y_{n}\}$) is bounded by
Corollary~\ref{cor:PPA:orig}(c).  By Proposition~\ref{prop:FWlogn}(b)
the number of good FW steps during $n$-th iteration is $O(\log
\frac{1}{\varepsilon_{n}})=O(\log n)$, and during the first $n$
iterations is $\sum_{k=1}^n O(\log k)=O(n\log n)$.  The number of bad
FW steps is thus also $O(n\log n)$ by the definition of linear
convergence and by the fact that the call $x_n={\tt FW}_{\varepsilon_n}(x_{n-1};\calF_{\gamma,\bar y_{n-1}})$
is initialized with vector $x_{n-1}$.
The remaining claims follow from Corollaries~\ref{cor:PPA:orig} and~\ref{cor:PPA:new} and Theorem~\ref{th:LinearConstraint}.
\end{proof}

\section{Second approach: primal-dual proximal algorithm}\label{sec:secondalg}
In this section we consider solving~\eqref{eq:saddle} without the
restriction that $h^*$ is the indicator function of a linear
constraint. Therefore we make use of proximal primal-dual algorithms
such as~\cite{ChaPock-JMIV} which in each step of the algorithm need
to compute proximal maps with respect to $f_\calP$ and $h^*$. By our
problem assumptions, the proximal map with respect to $h^*$ is
tractable but the proximal map with respect to $f_\calP$ requires to
solve for any $\bar x \in \calX$ and $\tau > 0$ an optimization
problem of the form
\[
\prox_{\tau f_\calP}(\bar x) = \argmin_{x \in \calP} f(x) + \frac1{2\tau}\norm{x-\bar x}^2.
\]
We note the obvious fact that each proximal subproblem is a
$\tau^{-1}$-strongly convex function with $L_f + \tau^{-1}$-Lipschitz
continuous gradient over a convex polytope. Hence, it falls into the
class $\mathfrak{F}_{\tt strong}$, on which Frank-Wolfe algorithms
achieve a linear rate of convergence. Similar to the previous section,
we will not be able to solve the subproblems exactly but up to a
certain accuracy $\varepsilon > 0$. We therefore need to resort to the
recently proposed inexact primal-dual algorithm by Rasch and
Chambolle~\cite[Section 3.1]{RaschChambolle:20} which can handle such
inaccuracy. The algorithm adapted to our situation is given below as
Algorithm~\ref{alg:PD}.

\begin{algorithm}[H]
  \DontPrintSemicolon
  choose $\tau,\sigma>0$ such that $\sigma\tau L_K^2<1$\\
  choose initial points $x_0\in\calX$ and $y_0\in\calY$, set $x_{-1}=x_0$ \\
  \For{$n=0,1,\ldots$}
      {      
        \begin{eqnarray} 
          y_{n+1} & = & \prox_{\sigma h^\ast}(y_n+\sigma K (2x_n-x_{n-1})) \\
          x_{n+1} & \approx_{\varepsilon_{n+1}} & \prox_{\tau f_\calP}(x_n-\tau K^\ast y_{n+1})
        \end{eqnarray}
        
      }
      \caption{Inexact primal-dual algorithm. }\label{alg:PD}
\end{algorithm}

\addtocounter{equation}{-2}
\refstepcounter{equation}
\label{eq:PD:y:update}
\refstepcounter{equation}
\label{eq:PD:x:update}

The following result has been shown
in~\cite{RaschChambolle:20}.~\footnote{Part (a) is not formulated
  explicitly as a theorem in~\cite{RaschChambolle:20}, but can be
  found on page 396 before Theorem 2. Part (b) appears as Theorem 2
  in~\cite{RaschChambolle:20}.}

\begin{theorem}[\cite{RaschChambolle:20}]\label{th:PD}~\\
 (a) Define $x^e_n=\frac{1}{n}\sum_{k=1}^n x_k$ and
  $y^e_n=\frac{1}{n}\sum_{k=1}^n y_k$. Then for any $n\ge 1$ and
  $(x,y)\in\calX\times\calY$
$$ \calL(x_n^e,y)-\calL(x,y_n^e)\le
  \frac{1}{n}\left(\frac{||x-x_0||^2}{2\tau}+\frac{||y-y_0||^2}{2\sigma}+\frac{{\tt
      diam}(\calP)}{\tau}A_n+\frac{1}{\tau}B_n\right)
$$
where
\begin{align}
A_n&=\sum_{k=1}^n \sqrt{2\tau \varepsilon_k} & B_n&=\sum_{k=1}^n
\tau\varepsilon_k
\end{align}
(b) If sequences $\{A_n\}$ and $\{B_n\}$ are bounded then there exists
saddle point $(x^\star,y^\star)$ of problem~\eqref{eq:saddle} such
that $x_n\rightarrow x^\star$ and $y_n\rightarrow y^\star$.
\end{theorem}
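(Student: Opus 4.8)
The plan is to run the classical primal--dual convergence argument (in the style of~\cite{ChaPock-JMIV}), modified to tolerate the inexactness of the $x$-step, exactly as done in~\cite{RaschChambolle:20}. I would start from two ``three-point'' inequalities, one per update of Algorithm~\ref{alg:PD}. Since the $y$-update is an \emph{exact} proximal step for the $\frac1\sigma$-strongly convex map $y\mapsto h^\ast(y)+\frac1{2\sigma}\|y-y_n-\sigma K(2x_n-x_{n-1})\|^2$, it yields, for every $y\in\calY$,
\[
h^\ast(y_{n+1})-h^\ast(y)+\langle K(2x_n-x_{n-1}),\,y-y_{n+1}\rangle\;\le\;\tfrac1{2\sigma}\|y-y_n\|^2-\tfrac1{2\sigma}\|y-y_{n+1}\|^2-\tfrac1{2\sigma}\|y_{n+1}-y_n\|^2 .
\]
The $x$-update is an $\varepsilon_{n+1}$-minimizer of the $\frac1\tau$-strongly convex $\Phi(x)=f_\calP(x)+\frac1{2\tau}\|x-x_n+\tau K^\ast y_{n+1}\|^2$, so strong convexity forces its exact minimizer $\hat x=\prox_{\tau f_\calP}(x_n-\tau K^\ast y_{n+1})$ to satisfy $\|x_{n+1}-\hat x\|\le\sqrt{2\tau\varepsilon_{n+1}}$; I would then feed this into the exact three-point inequality at $\hat x$ to obtain, for every $x\in\calX$,
\[
f_\calP(x_{n+1})-f_\calP(x)+\langle K(x_{n+1}-x),\,y_{n+1}\rangle\;\le\;\tfrac1{2\tau}\|x-x_n\|^2-\tfrac1{2\tau}\|x-x_{n+1}\|^2-\tfrac1{2\tau}\|x_{n+1}-x_n\|^2+\tfrac{\sqrt{2\tau\varepsilon_{n+1}}}{\tau}\|x-x_{n+1}\|+\varepsilon_{n+1}.
\]

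Next I would add these two inequalities and regroup the bilinear terms: the left side becomes $\calL(x_{n+1},y)-\calL(x,y_{n+1})$ plus the ``mismatch'' $\langle K\big((x_{n+1}-x_n)-(x_n-x_{n-1})\big),\,y-y_{n+1}\rangle$. Of this, $\langle K(x_{n+1}-x_n),y-y_{n+1}\rangle$ telescopes; $-\langle K(x_n-x_{n-1}),y-y_{n+1}\rangle$ splits into a telescoping part $-\langle K(x_n-x_{n-1}),y-y_n\rangle$ (which vanishes at $n=0$ since $x_{-1}=x_0$) and a residual $-\langle K(x_n-x_{n-1}),y_n-y_{n+1}\rangle$ that I would bound by Young's inequality, $L_K\|x_n-x_{n-1}\|\,\|y_n-y_{n+1}\|\le\frac{L_K a}{2}\|x_n-x_{n-1}\|^2+\frac{L_K}{2a}\|y_n-y_{n+1}\|^2$. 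Choosing $a$ with $\sigma L_K\le a\le 1/(\tau L_K)$ --- possible exactly because $\sigma\tau L_K^2<1$ --- lets these residuals be absorbed into the negative quadratics $-\frac1{2\tau}\|x_k-x_{k-1}\|^2,\ -\frac1{2\sigma}\|y_k-y_{k-1}\|^2$ produced above (the leftover boundary term at $N$ being soaked up by a $-\frac1{2\sigma}\|y-y_N\|^2$ from the telescoped $y$-distances). Summing over $n=0,\dots,N-1$, telescoping the distance terms, discarding nonpositive leftovers, dividing by $N$, and using Jensen (convexity of $\calL(\cdot,y)$, concavity of $\calL(x,\cdot)$) to pass from the iterates to the averages $(x^e_N,y^e_N)$, I would arrive at $\calL(x^e_N,y)-\calL(x,y^e_N)\le\frac1N\big(\frac{\|x-x_0\|^2}{2\tau}+\frac{\|y-y_0\|^2}{2\sigma}+\frac1\tau\sum_{k=1}^N\sqrt{2\tau\varepsilon_k}\,\|x-x_k\|+\sum_{k=1}^N\varepsilon_k\big)$. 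For $x\notin\calP$ the left side is $-\infty$; for $x\in\calP$ we have $\|x-x_k\|\le{\tt diam}(\calP)$, so the third term is at most $\frac{{\tt diam}(\calP)}{\tau}A_N$ and the fourth equals $B_N/\tau$, which is exactly part~(a).

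For part~(b) I would specialize the per-step inequality to a fixed saddle point $(x^\star,y^\star)$, whose left side $\calL(x_{n+1},y^\star)-\calL(x^\star,y_{n+1})$ is then $\ge 0$, and this time \emph{keep} the strictly negative leftover terms (carrying a factor $\propto 1-\sigma\tau L_K^2>0$ after the Young split). This yields a quasi-Fej\'er recursion $V_{n+1}\le V_n-c\big(\|x_{n+1}-x_n\|^2+\|y_{n+1}-y_n\|^2\big)+r_{n+1}$, with $V_n$ the $\tau,\sigma$-weighted squared distance of $(x_n,y_n)$ to $(x^\star,y^\star)$ plus a bounded bilinear correction and $r_{n+1}=\frac{\sqrt{2\tau\varepsilon_{n+1}}}{\tau}\|x^\star-x_{n+1}\|+\varepsilon_{n+1}$; boundedness of $\{A_n\},\{B_n\}$ makes $\sum_n r_n<\infty$. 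Standard consequences then follow: $\{V_n\}$ converges, $\{(x_n,y_n)\}$ is bounded, and $\|x_{n+1}-x_n\|,\|y_{n+1}-y_n\|\to0$. I would show any cluster point $(\bar x,\bar y)$ is a saddle point by passing to the limit along a subsequence in the per-step inequality (vanishing increments kill the mismatch term and the negative quadratics, $\sum r_n<\infty$ kills the error, and lower/upper semicontinuity of $\calL(\cdot,y)/\calL(x,\cdot)$ do the rest), giving $\calL(\bar x,y)\le\calL(\bar x,\bar y)\le\calL(x,\bar y)$ for all $(x,y)$; and since the quasi-Fej\'er property holds for $V_n$ centered at \emph{any} saddle point, Opial's lemma upgrades subsequential convergence to convergence of the whole sequence.

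I expect the two delicate points to be: (i) converting the \emph{function-value} inexactness $x_{n+1}\approx_{\varepsilon_{n+1}}\prox_{\tau f_\calP}(x_n-\tau K^\ast y_{n+1})$ into the second three-point inequality above, i.e.\ controlling the deviation from the exact prox point so it costs only an error of order $\sqrt{\varepsilon_{n+1}}\,\|x-x_{n+1}\|+\varepsilon_{n+1}$; and (ii) in part~(b), the bookkeeping of the bilinear terms so that precisely the hypothesis $\sigma\tau L_K^2<1$ is what is needed to produce a genuine quasi-Fej\'er sequence, together with the Opial-type argument promoting a convergent subsequence to the full sequence.
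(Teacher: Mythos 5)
Your sketch is correct in outline, but note that the paper itself contains no proof of Theorem~\ref{th:PD}: as the footnote states, both parts are imported verbatim from Rasch--Chambolle \cite{RaschChambolle:20} (part (a) from the discussion before their Theorem 2, part (b) from Theorem 2 itself), so the comparison is really with the cited proof, which your argument reconstructs faithfully — inexact three-point inequalities per step, Young absorption of the bilinear mismatch using $\sigma\tau L_K^2<1$ and $x_{-1}=x_0$, telescoping and Jensen for the ergodic bound in (a), and a quasi-Fej\'er/Opial argument with summable errors for (b). One caveat on your delicate point (i): the per-step $x$-inequality you write down, with error exactly $\frac{\sqrt{2\tau\varepsilon_{n+1}}}{\tau}\|x-x_{n+1}\|+\varepsilon_{n+1}$ while retaining the full $-\frac1{2\tau}\|x_{n+1}-x_n\|^2$ term, is what the type-2 approximation lemma gives directly — this is precisely Lemma~\ref{lemma:GJHAS} of the paper applied with $\phi(z)=f_\calP(z)+\scp{Kz}{y_{n+1}}$ and prox center $x_n$, and it is the route taken in \cite{RaschChambolle:20}. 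Your alternative derivation via the distance bound $\|x_{n+1}-\hat x\|\le\sqrt{2\tau\varepsilon_{n+1}}$ and transfer from the exact three-point inequality at $\hat x$ does work on the bounded polytope, but the transfer generates cross terms of order $\sqrt{\varepsilon_{n+1}}\,\|x_{n+1}-x_n\|$ and $\sqrt{\varepsilon_{n+1}}\,\|\hat x-x_n\|$ which either degrade the negative quadratic (affecting the admissible range of $\sigma\tau L_K^2$) or inflate the constant in front of $A_n$, so to obtain part (a) with exactly the stated constants you should use the perturbation-lemma formulation rather than the distance estimate.
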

To solve the subproblem in eq.~\eqref{eq:PD:x:update}, we call 
Algorithm~\ref{alg:FWeps} via
$$ x_{n+1}\leftarrow {\tt FW}_{\varepsilon_{n+1}}(x_n;g_n),\qquad
g_n(x)=f_\calP(x)+\frac{1}{2\tau}||x-(x_n-\tau K^\ast y_{n+1})||^2
$$
To make sequences $\{A_n\}$
and $\{B_n\}$ bounded, we can set
$\varepsilon_n=\Theta(n^{-2-\delta})$ for some $\delta>0$.  With
these choices, we obtain
\begin{theorem}
Suppose that procedure ${\tt FWstep}$ has a linear convergence
rate on $\mathfrak{F}_{\tt strong}$ (e.g.\ it is one step of AFW, DiCG
or BCG). Then Algorithm~\ref{alg:PD} makes $O(n\log n)$ calls to ${\tt
  FWstep}$ during the first $n$ iterations, and obtains iterates
$x_n^e$ and $y_n^e$ satisfying $F(x^e_n)-F(x^\star)=O(1/n)$ (if $\dom
h^\ast$ is a compact set) and $H(y^\star)-H(y^e_n)=O(1/n)$.
\end{theorem}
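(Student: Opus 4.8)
The proof splits into two independent parts: the $O(n\log n)$ bound on the total number of ${\tt FWstep}$ calls, and the two $O(1/n)$ rates. For the first part the plan is to exhibit one fixed compact sub-family of $\mathfrak{F}_{\tt strong}$ containing every subproblem $g_n$ solved inside Algorithm~\ref{alg:PD}, apply Proposition~\ref{prop:FWlogn}(b) to bound the number of good steps per call, and then sum and account for bad steps. For the second part the plan is to specialize the primal--dual gap bound of Theorem~\ref{th:PD}(a) to two well-chosen pairs $(x,y)$.

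First I would spell out the sub-family. The subproblem $g_n(x)=f_\calP(x)+\frac1{2\tau}\|x-(x_n-\tau K^\ast y_{n+1})\|^2$ has the form $g(x)+\delta_\calP(x)$ with $g$ being $\tau^{-1}$-strongly convex and having a $(L_f+\tau^{-1})$-Lipschitz gradient, so $\Theta_{g_n}=(\tau^{-1},L_f+\tau^{-1})$ is the \emph{same} point for every $n$ --- a (trivially) compact parameter set with uniformly bounded Lipschitz constant. It then remains to check $\sup_n\sup_{x\in\calP}g_n^\downarrow(x)<\infty$, where $g_n^\downarrow(x)=g_n(x)-\min_x g_n(x)$. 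Here I would use that $\calP$ is a bounded polytope (a standing assumption) and that, since $\{A_n\},\{B_n\}$ are bounded for the choice $\varepsilon_n=\Theta(n^{-2-\delta})$, Theorem~\ref{th:PD}(b) yields $x_n\to x^\star$ and $y_n\to y^\star$; hence $\{x_n\},\{y_n\}$ are bounded and the proximal centers $c_n=x_n-\tau K^\ast y_{n+1}$ lie in a bounded set. Consequently, for $x\in\calP$, $g_n(x)=f(x)+\frac1{2\tau}\|x-c_n\|^2$ is bounded above by a constant independent of $n$ ($f$ being continuous on the compact set $\calP$), while $\min_x g_n(x)\ge\min_{x\in\calP}f(x)$ is bounded below, so $g_n^\downarrow$ is uniformly bounded on $\calP$. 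Proposition~\ref{prop:FWlogn}(b) then gives $O(\log\tfrac1{\varepsilon_{n+1}})=O(\log n)$ good steps in the $n$-th call, hence $\sum_{k<n}O(\log k)=O(n\log n)$ good steps over the first $n$ iterations. Since each call ${\tt FW}_{\varepsilon_{n+1}}(x_n;g_n)$ is warm-started from the output $x_n$ of the previous call, and since (as noted in Section~\ref{sec:FW}) the bound $N_{\tt bad}\le R_0+R_1N_{\tt good}$ survives a change of $g_\calP$ as long as $\calP$ is fixed, the cumulative number of bad steps is also $O(n\log n)$, and hence so is the total number of ${\tt FWstep}$ calls.

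For the rates I would start from Theorem~\ref{th:PD}(a): for all $n\ge1$ and all $(x,y)$, $\calL(x_n^e,y)-\calL(x,y_n^e)\le\frac1n\big(\frac{\|x-x_0\|^2}{2\tau}+\frac{\|y-y_0\|^2}{2\sigma}+\frac{{\tt diam}(\calP)}{\tau}A_n+\frac1\tau B_n\big)$, with $A_n,B_n$ bounded. To bound $H(y^\star)-H(y_n^e)$, take $\hat x\in\argmin_x\calL(x,y_n^e)$ (attained since $\calP$ is compact and $f$ continuous, and $y_n^e\in\dom h^\ast$ by convexity of $\dom h^\ast$), so $H(y_n^e)=\calL(\hat x,y_n^e)$; plugging $x=\hat x,\,y=y^\star$ and using $\calL(x_n^e,y^\star)\ge\min_x\calL(x,y^\star)=H(y^\star)$ gives $H(y^\star)-H(y_n^e)\le\calL(x_n^e,y^\star)-\calL(\hat x,y_n^e)=O(1/n)$; this uses only compactness of $\calP$. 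To bound $F(x_n^e)-F(x^\star)$ when $\dom h^\ast$ is compact, take $\hat y\in\argmax_y\calL(x_n^e,y)$ (attained over the compact set $\dom h^\ast$), so $F(x_n^e)=\calL(x_n^e,\hat y)$; plugging $x=x^\star,\,y=\hat y$ and using $\calL(x^\star,y_n^e)\le\max_y\calL(x^\star,y)=F(x^\star)$ gives $F(x_n^e)-F(x^\star)\le\calL(x_n^e,\hat y)-\calL(x^\star,y_n^e)=O(1/n)$, where the constant now depends on $\dom h^\ast$ through $\|\hat y-y_0\|$.

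I expect the only genuinely delicate point to be verifying the uniform-boundedness hypotheses needed to invoke Proposition~\ref{prop:FWlogn}(b) --- in particular that the proximal centers $c_n$ remain bounded, which is exactly where Theorem~\ref{th:PD}(b) enters --- together with the bookkeeping that makes the cumulative bad-step bound legitimate across the sequence of distinct but common-domain subproblems $g_0,g_1,\ldots$. Everything else is a routine repackaging of Theorem~\ref{th:PD} and Proposition~\ref{prop:FWlogn}.
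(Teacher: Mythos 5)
Your proof is correct, and its overall architecture matches the paper's: check that every subproblem $g_n$ lies in a uniformly controlled subfamily of $\mathfrak{F}_{\tt strong}$ (using Theorem~\ref{th:PD}(b) to bound the proximal centers $x_n-\tau K^\ast y_{n+1}$), invoke Proposition~\ref{prop:FWlogn}(b) to get $O(\log n)$ good steps per outer iteration plus the bad-step accounting over the common polytope, and then specialize the gap bound of Theorem~\ref{th:PD}(a) to suitable test pairs. The one place you genuinely diverge is the dual rate $H(y^\star)-H(y^e_n)=O(1/n)$: the paper obtains it by a reduction to the compact case --- since $\{x_n\},\{y_n\}$ are bounded it adds an indicator $\delta_S$ of a suitable compact set to $h^\ast$, argues this changes neither the algorithm nor the values $H(y^\star),H(y^e_n)$, and then recycles its primal argument with $y\in\argmax_{y\in\dom h^\ast}\calL(x^e_n,y)$ --- whereas you test Theorem~\ref{th:PD}(a) directly at $(x,y)=(\hat x,y^\star)$ with $\hat x\in\argmin_{x}\calL(x,y^e_n)$, so only compactness of $\calP$ and the fixed saddle point $y^\star$ enter the constant, and no compactification of $\dom h^\ast$ is needed. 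Your route is more direct for that claim; the paper's route lets it reuse a single duality-gap argument for both the primal and dual statements at the cost of the w.l.o.g.\ modification of $h^\ast$. Your explicit verification of the hypotheses of Proposition~\ref{prop:FWlogn}(b) (constant parameter vector $\Theta_{g_n}=(\tau^{-1},L_f+\tau^{-1})$, bounded centers, hence $g_n^\downarrow$ uniformly bounded on $\calP$) spells out what the paper leaves implicit, and your primal bound via $(x,y)=(x^\star,\hat y)$ with $\hat y\in\argmax_{y\in\dom h^\ast}\calL(x^e_n,y)$ is equivalent to the paper's.
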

\begin{proof}
Clearly, all proximal subproblems $g_n$ encountered during the algorithm
belong to $\mathfrak{F}_{\tt strong}$. Furthermore, vectors $\bar
x_{n+1}\eqdef x_n-\tau K^\ast y_{n+1}$ for $n\ge 0$ belong to a
compact set (by Theorem~\ref{th:PD}(b)).  By
Proposition~\ref{prop:FWlogn}(b) the number of good FW steps during
$n$-th iteration is $O(\log \frac{1}{\varepsilon_{n+1}})=O(\log n)$,
and during the first $n$ iterations is $\sum_{k=1}^n O(\log k)=O(n\log
n)$.  The number of bad FW steps is thus also $O(n\log n)$ by the
definition of linear convergence. 

Now assume that $\dom h^\ast$ is a compact set,
and define $x\in\argmin\limits_{x\in\calP} \calL(x,y^e_n)$,
 $y\in\argmax\limits_{y\in \dom h^\ast} \calL(x^e_n,y)$.
We have
$F(x^e_n)-F(x^\star)=\calL(x^e_n,y)-F(x^\star)\le\calL(x^e_n,y)-H(y^e_n)= \calL(x^e_n,y)-\calL(x,y^e_n)$,
so applying Theorem~\ref{th:PD}(a) for $(x,y)$ gives $F(x^e_n)-F(x^\star)=O(1/n)$.

It remains to prove the last claim.
Since the sets $\{x_n\}$ and $\{y_n\}$ are bounded, we can assume w.l.o.g.\ that $\dom h^\ast$ is a compact set
(we can add indicator function $\delta_S(y)$ to $h^\ast(y)$ for a suitable set $S\subseteq\calY$ so that the algorithm 
and values $H(y^\star)$, $H(y^e_n)$ are not affected). Using the same argument as before, we obtain $H(y^\star)-H(y^e_n)=O(1/n)$.
\end{proof}



\section{Numerical Results}\label{sec:results}

\begin{figure*}[ht!]
  \setcounter{subfigure}{0}
  \centering
  \subfigure[``Vapnik'' ($200\times 150$, $50$ labels)]{\includegraphics[height=0.205\textwidth]{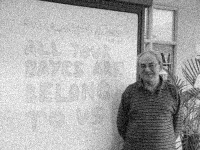}\hfil
    \includegraphics[height=0.205\textwidth]{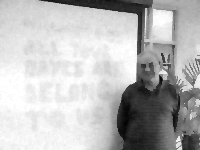}}\hfil
  \subfigure[``Einstein'' ($200 \times 185$, $50$ labels)]{\includegraphics[height=0.205\textwidth]{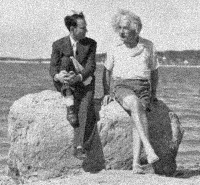}\hfil
    \includegraphics[height=0.205\textwidth]{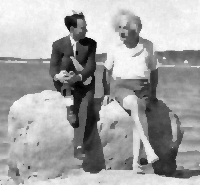}}\\
  \subfigure[``Tsukuba'' ($385\times 288$, $16$ labels)]{\includegraphics[height=0.177\textwidth]{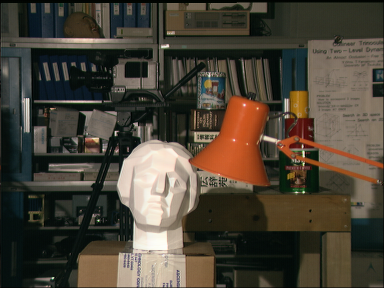}\hfil
    \includegraphics[height=0.177\textwidth]{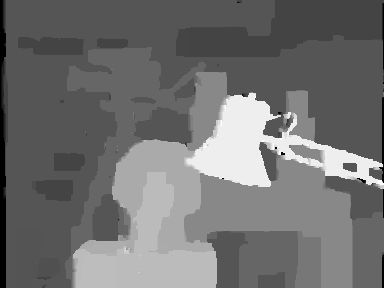}}\hfil
  \subfigure[``Adirondack'' ($718 \times 496$, $64$ labels)]{\includegraphics[height=0.177\textwidth]{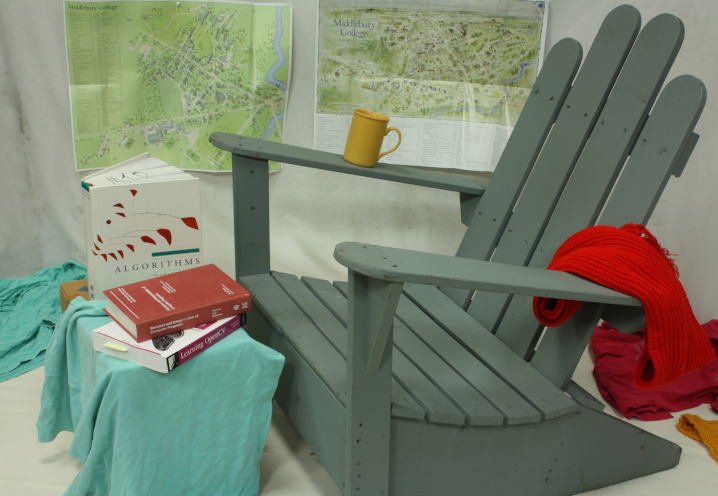}\hfil
  \includegraphics[height=0.177\textwidth]{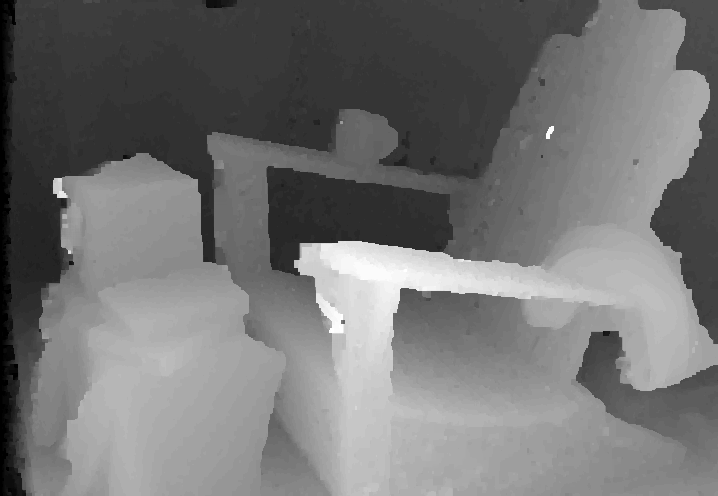}}\\
  \caption{Results of the proposed algorithm for MRF-based image denoising
    (first row) and disparity estimation (second row)}\label{fig:results}
\end{figure*}

\begin{figure*}[ht!]
  \setcounter{subfigure}{0}
  \centering
  \subfigure[]{\includegraphics[width=0.32\textwidth]{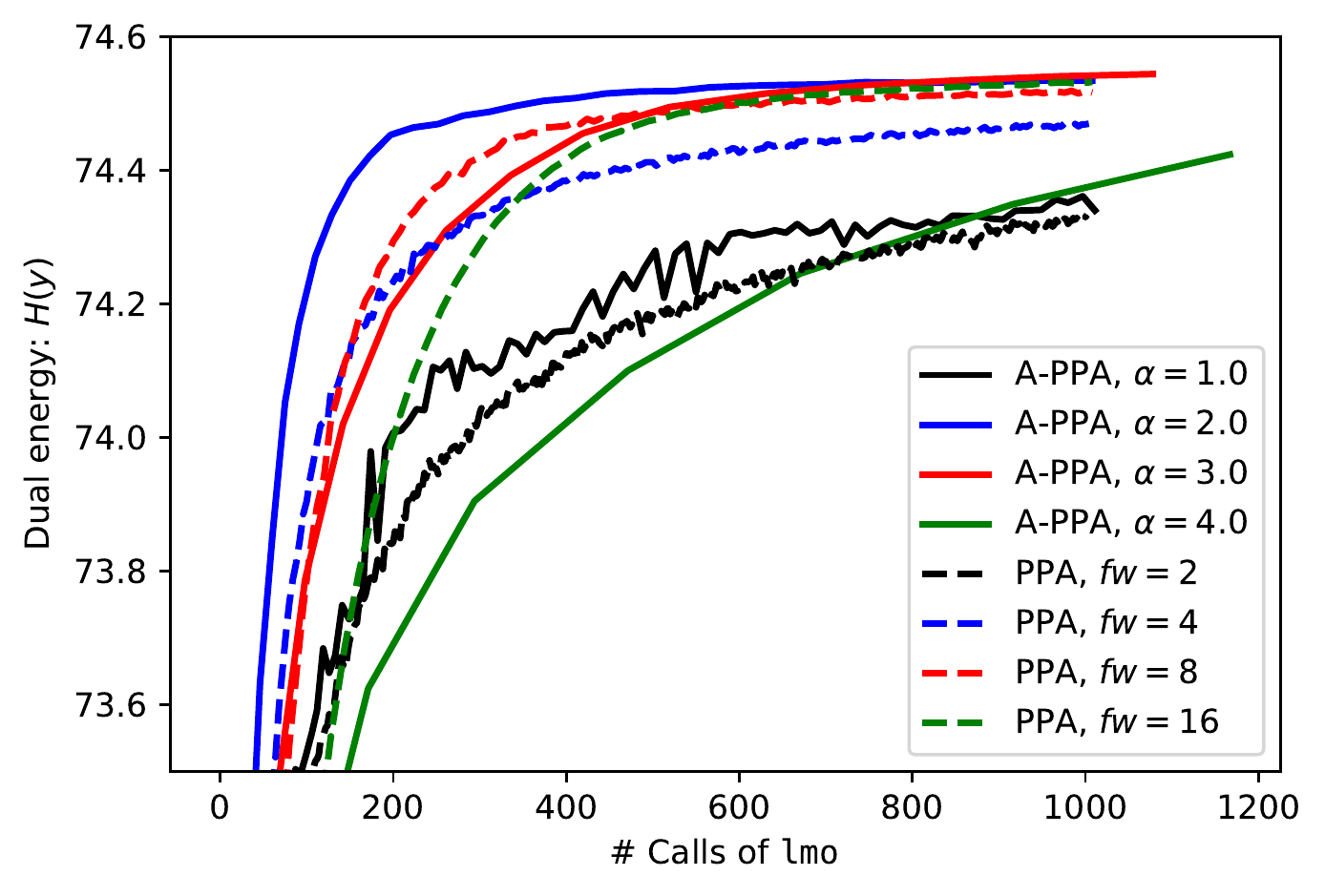}}\hfil
  \subfigure[]{\includegraphics[width=0.32\textwidth]{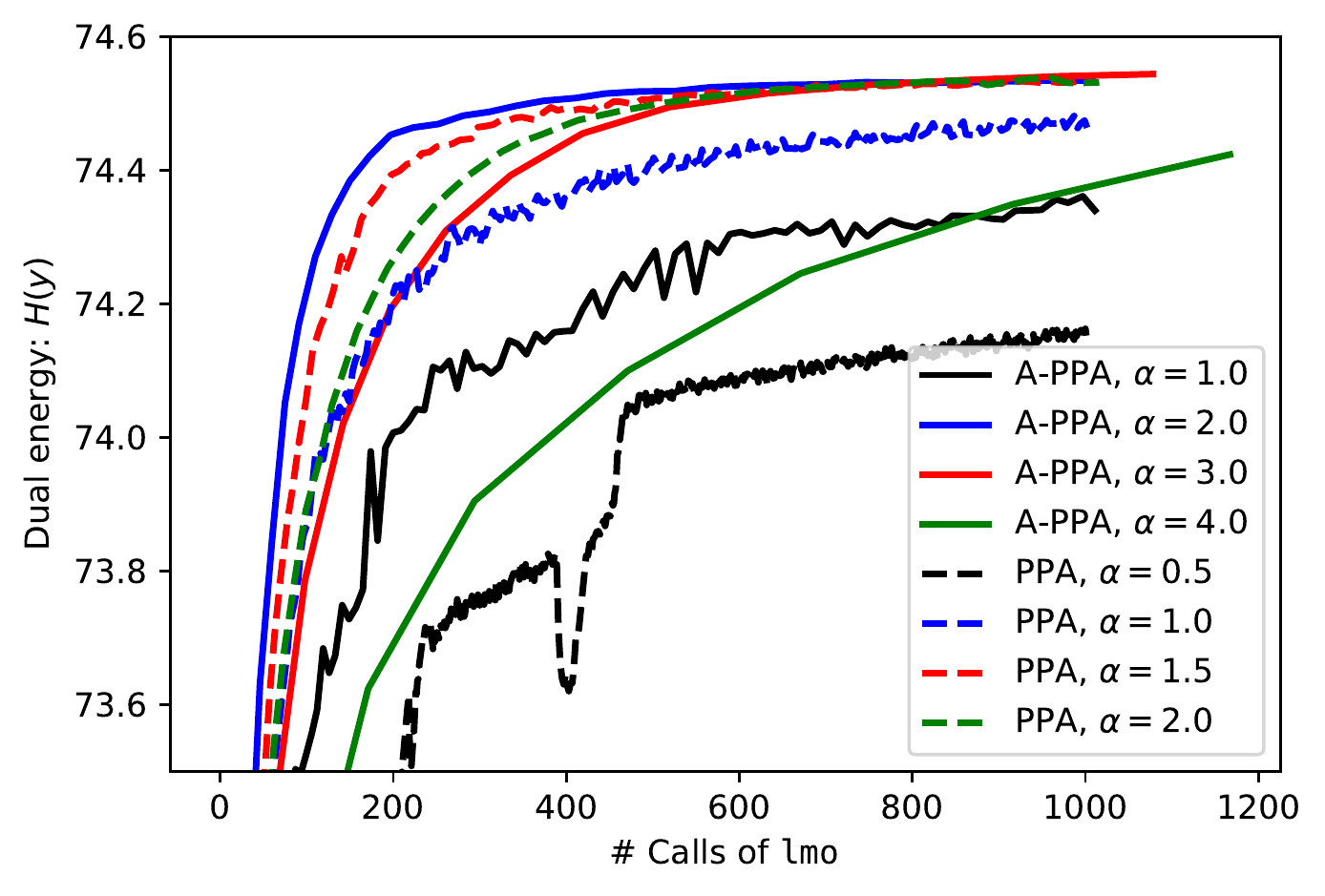}}\hfil
  \subfigure[]{\includegraphics[width=0.32\textwidth]{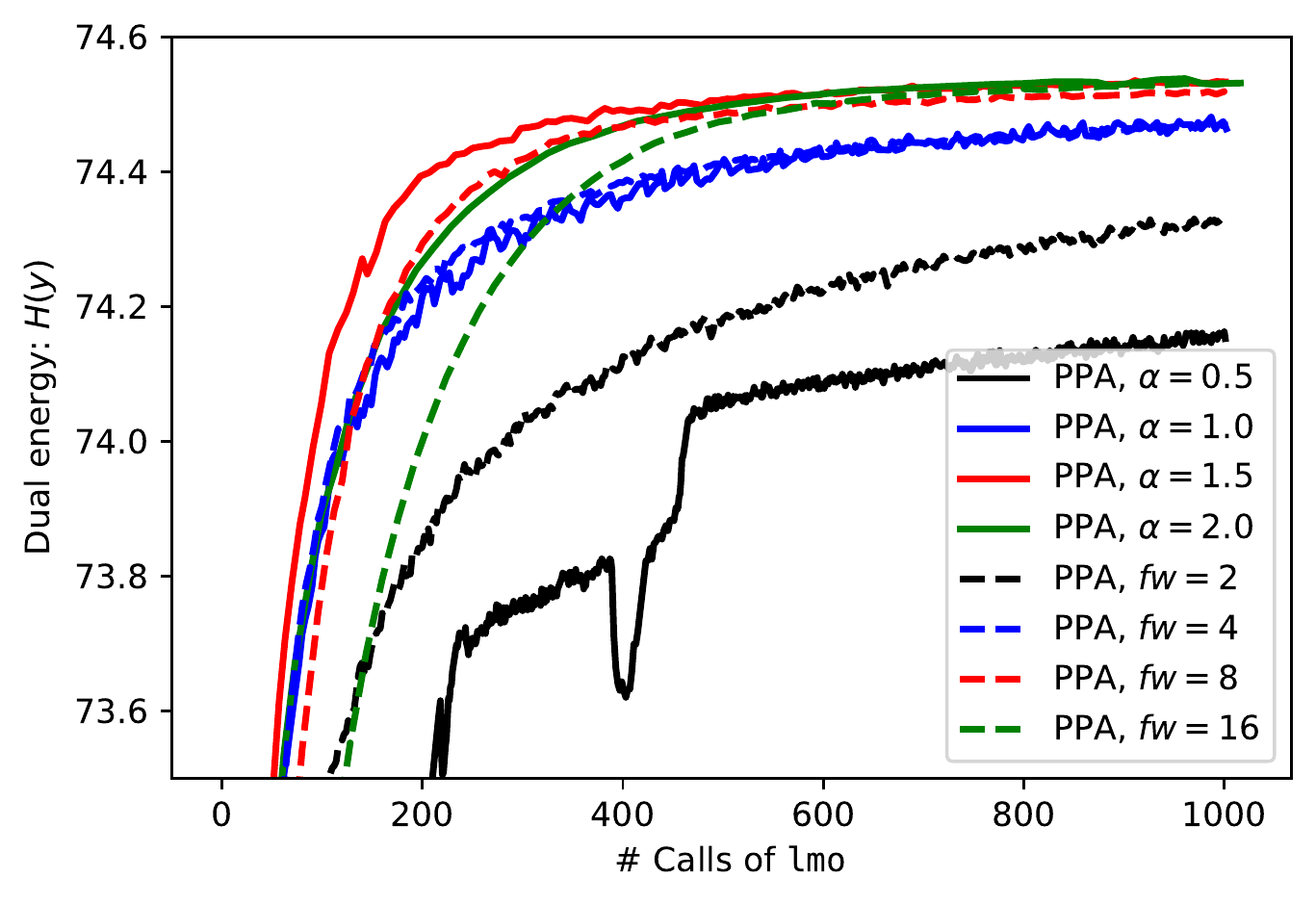}}\\
  \subfigure[]{\includegraphics[width=0.32\textwidth]{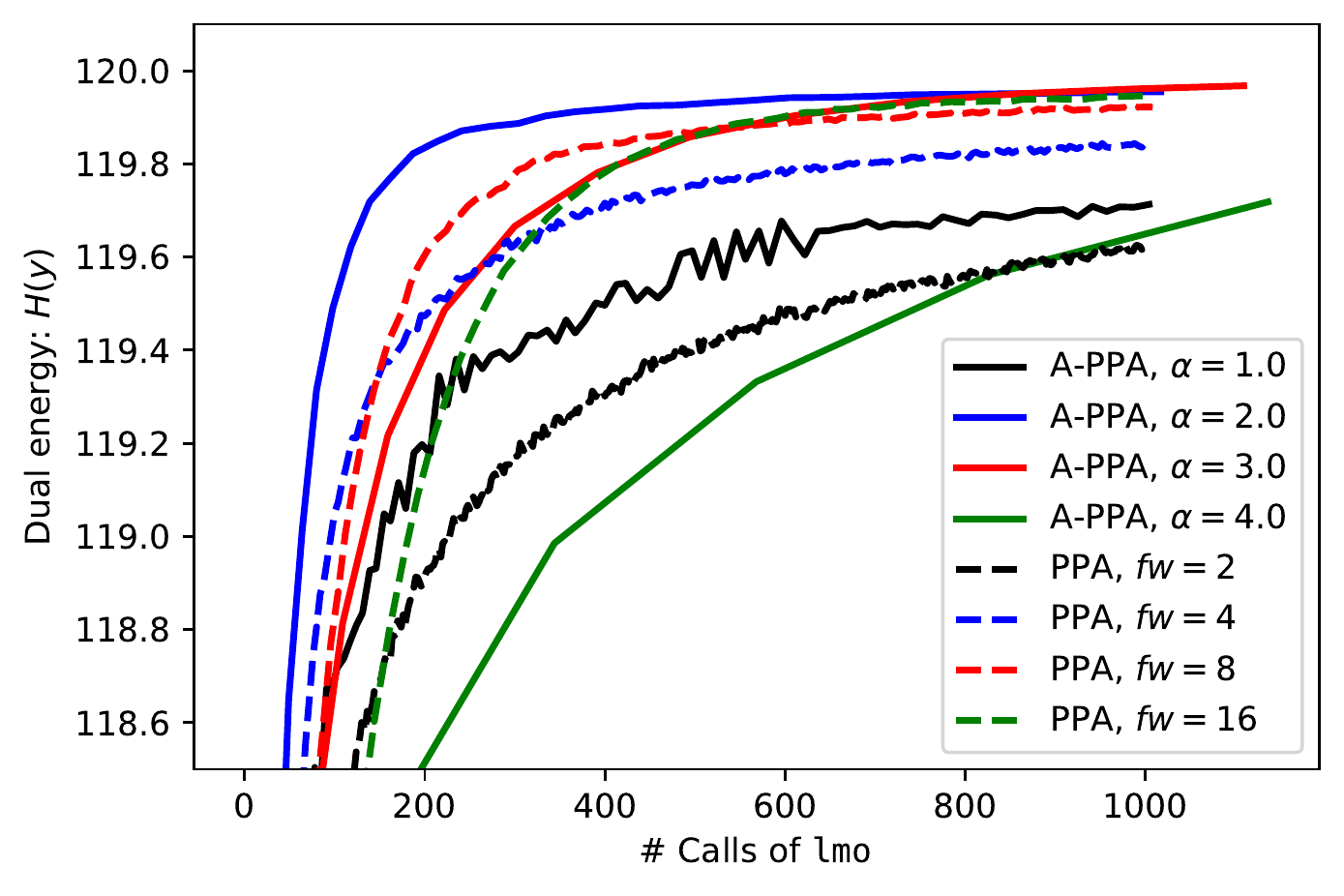}}\hfil
  \subfigure[]{\includegraphics[width=0.32\textwidth]{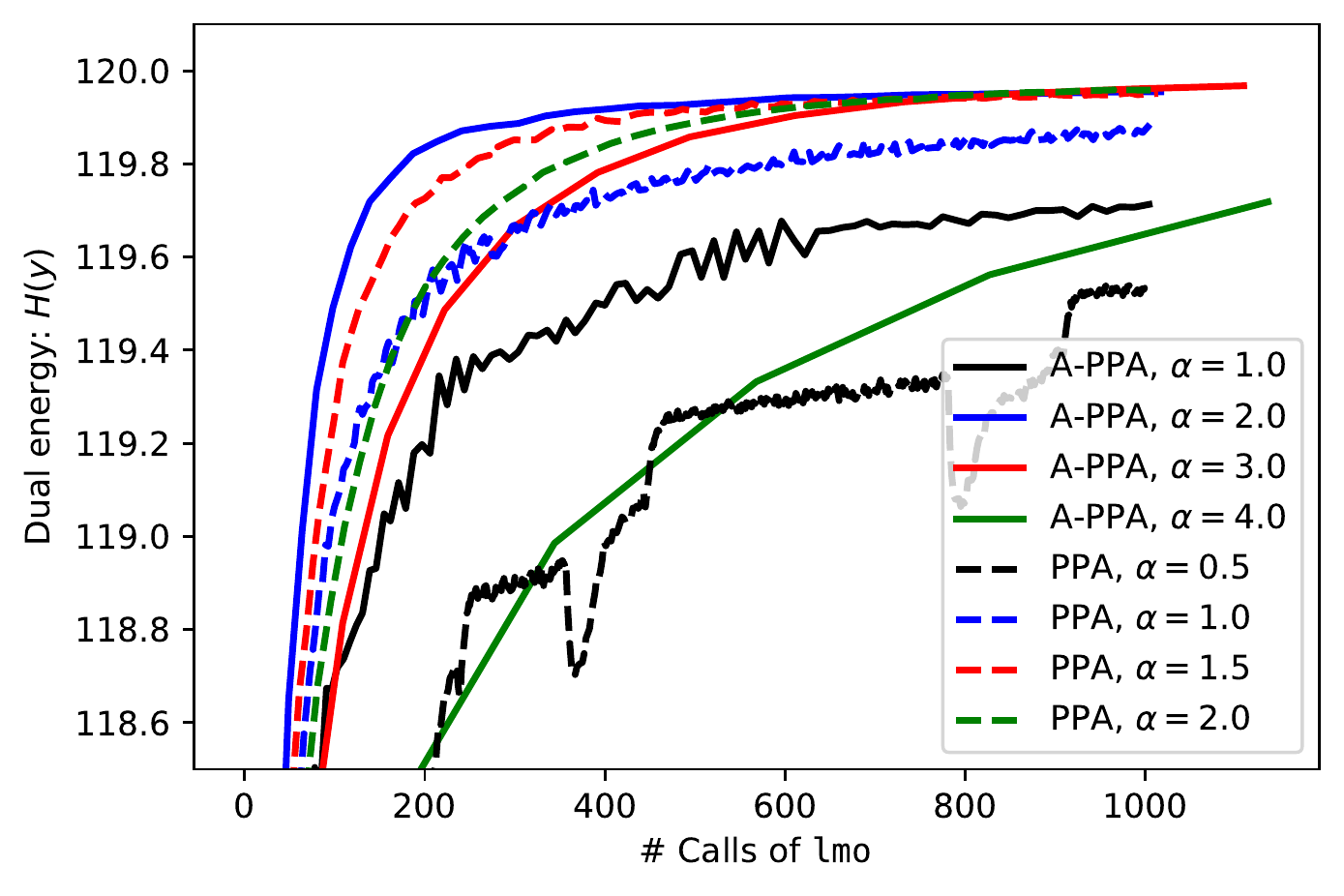}}\hfil
  \subfigure[]{\includegraphics[width=0.32\textwidth]{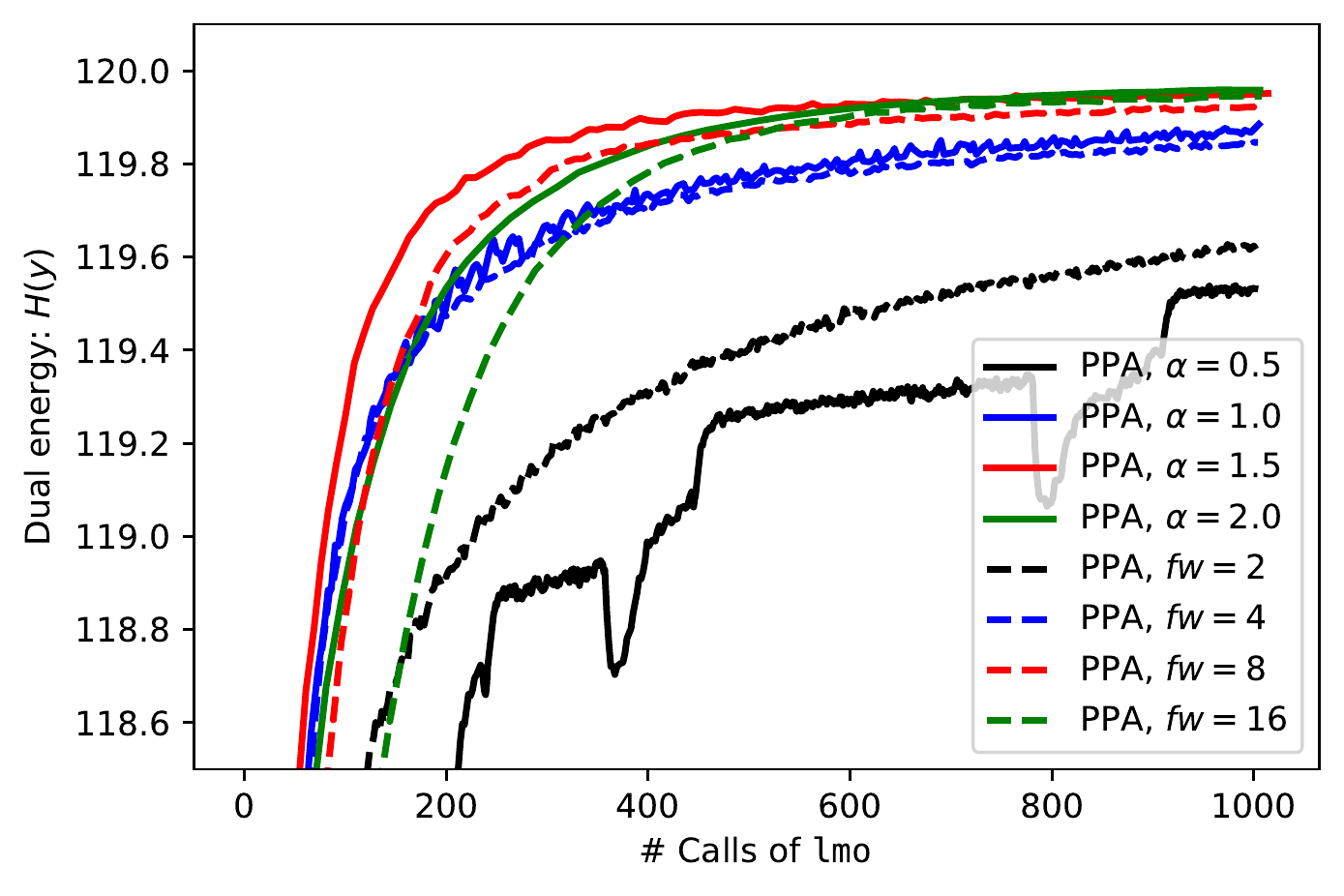}}\\
  \subfigure[]{\includegraphics[width=0.32\textwidth]{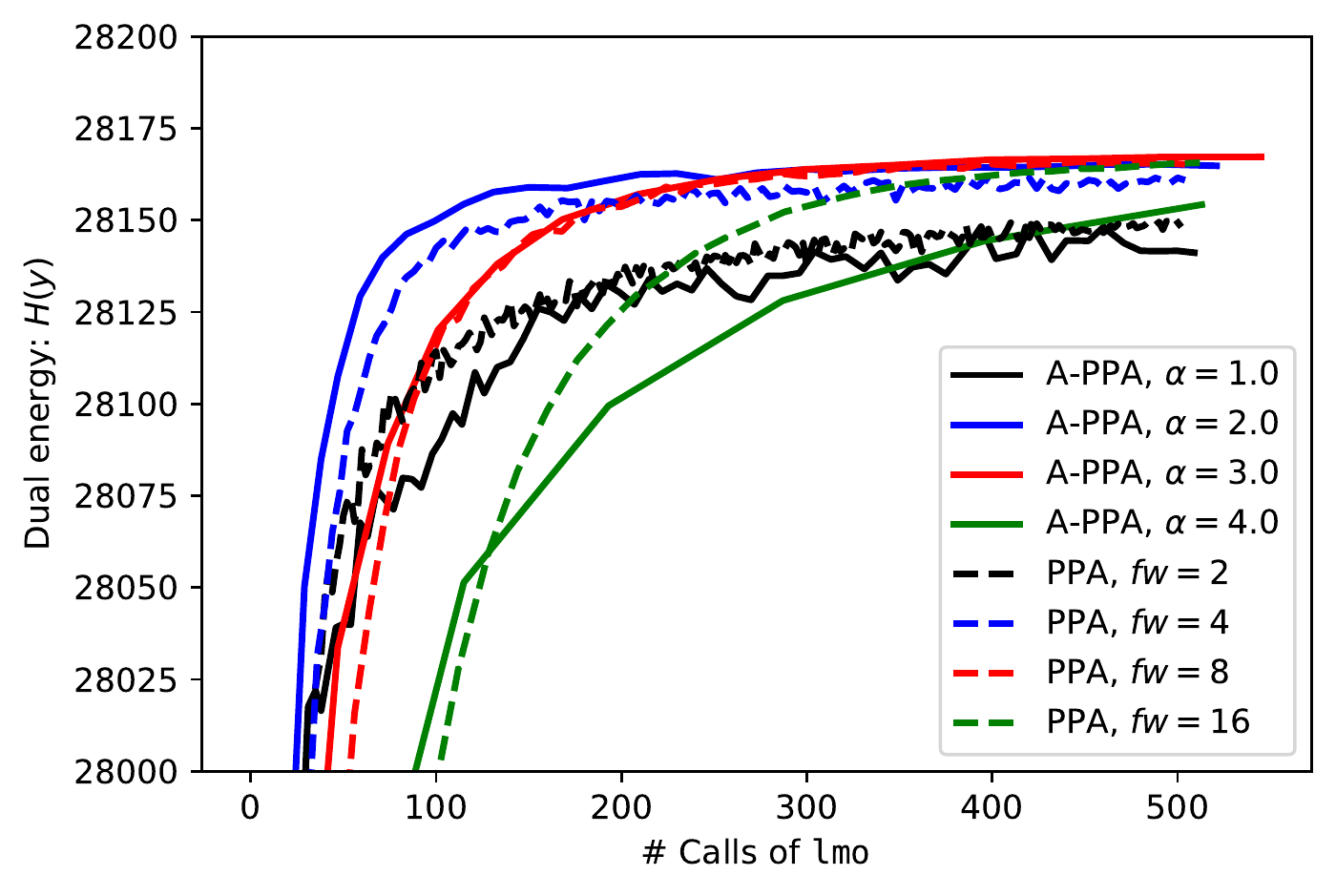}}\hfil
  \subfigure[]{\includegraphics[width=0.32\textwidth]{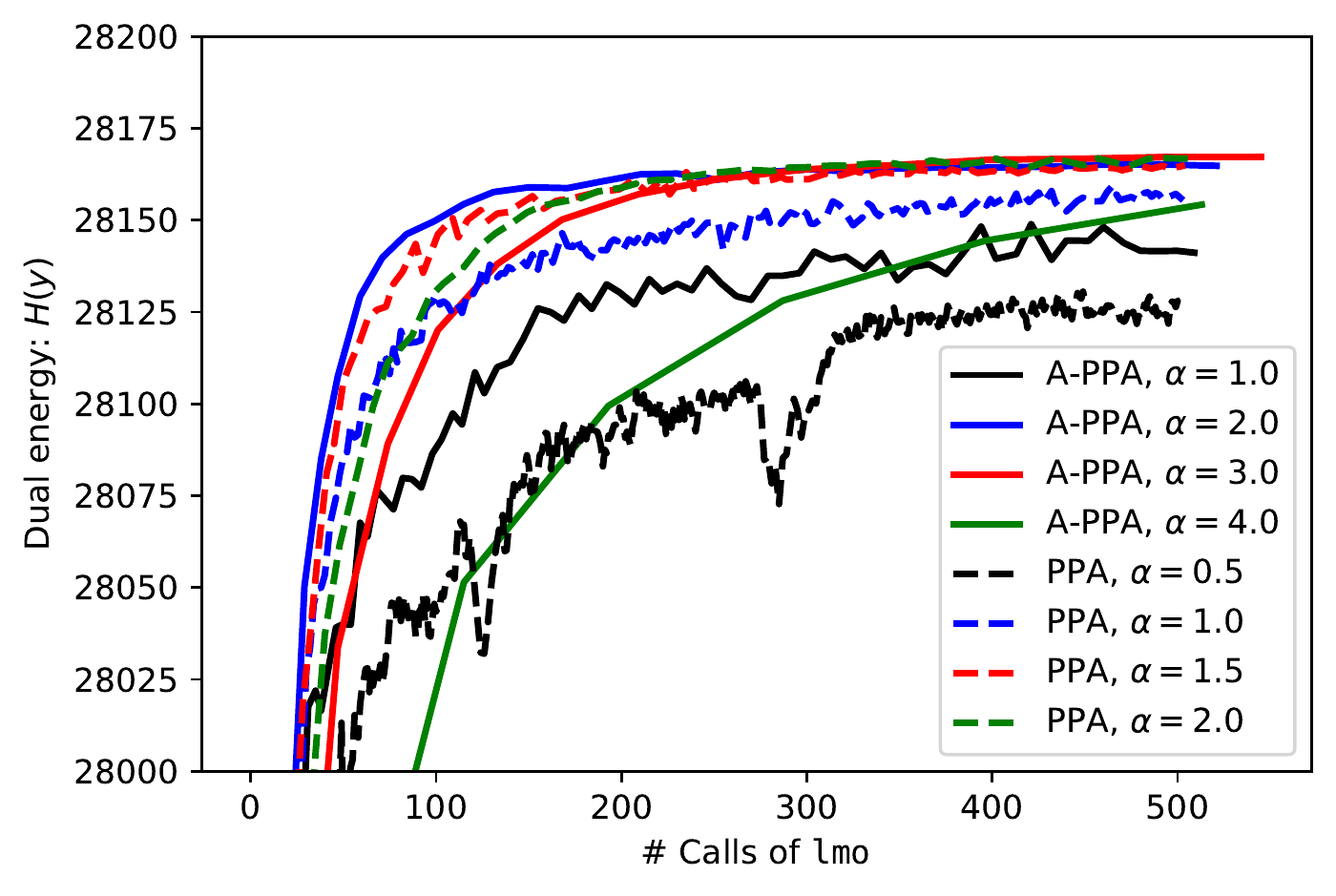}}\hfil
  \subfigure[]{\includegraphics[width=0.32\textwidth]{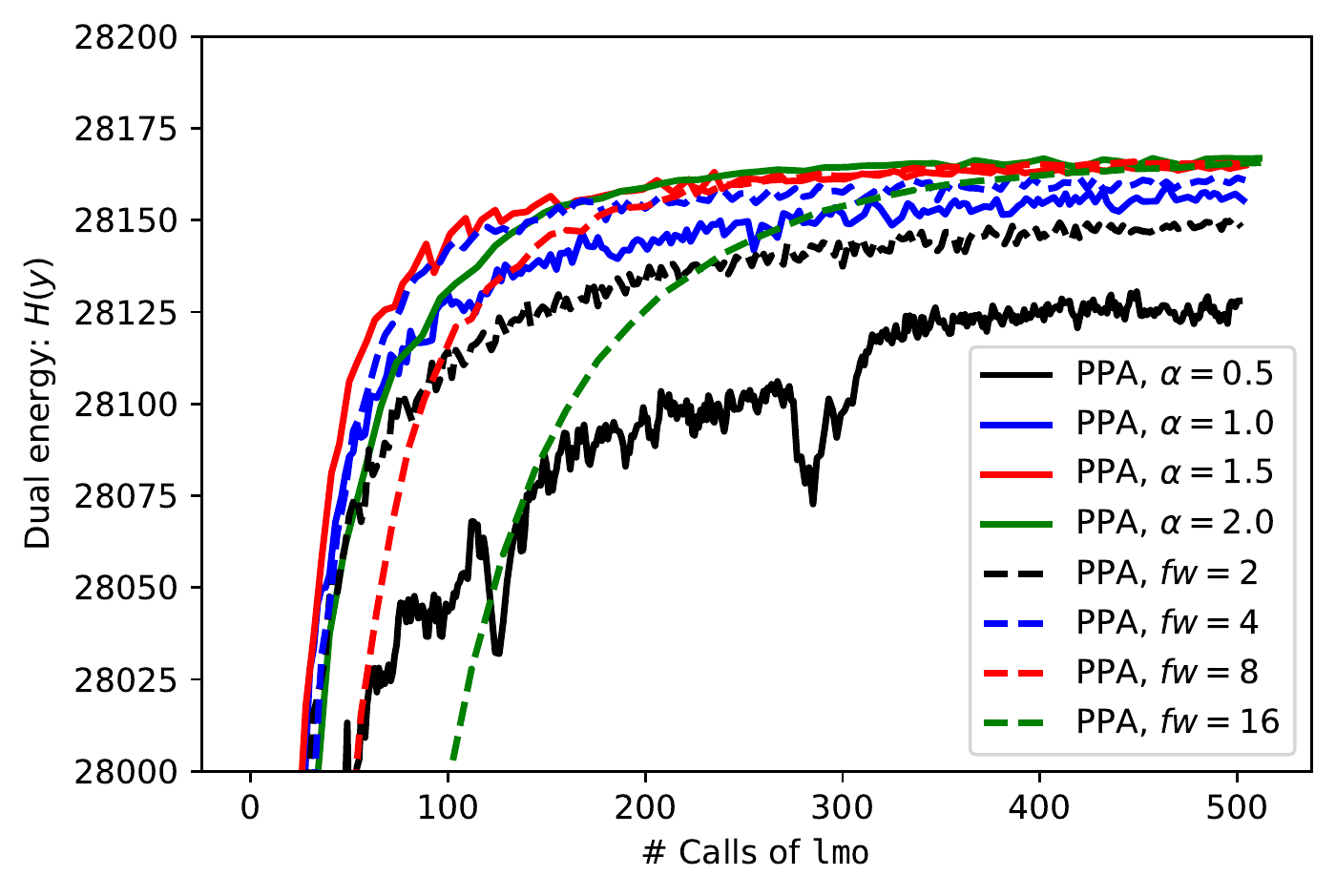}}
  \subfigure[]{\includegraphics[width=0.32\textwidth]{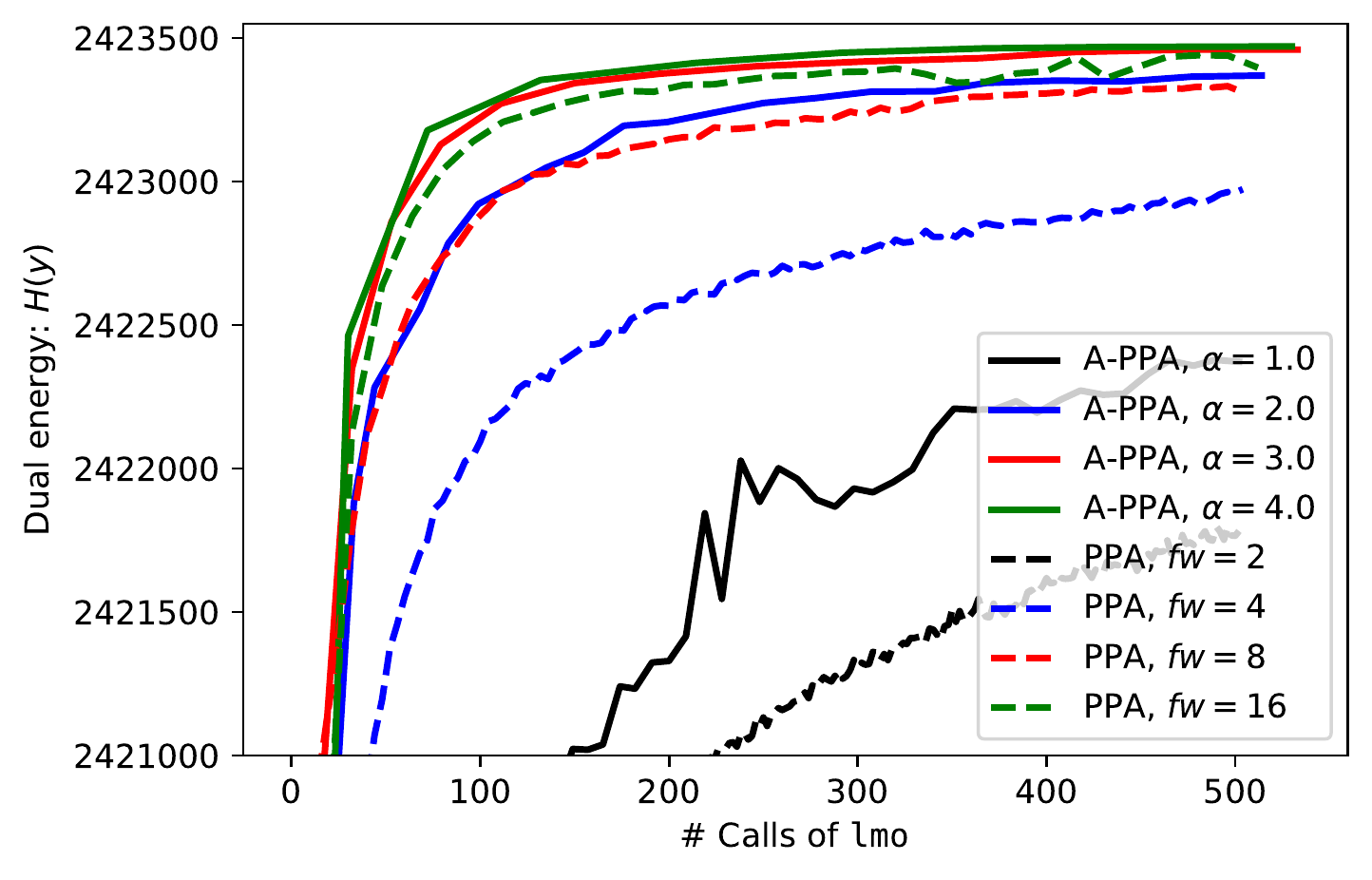}}\hfil
  \subfigure[]{\includegraphics[width=0.32\textwidth]{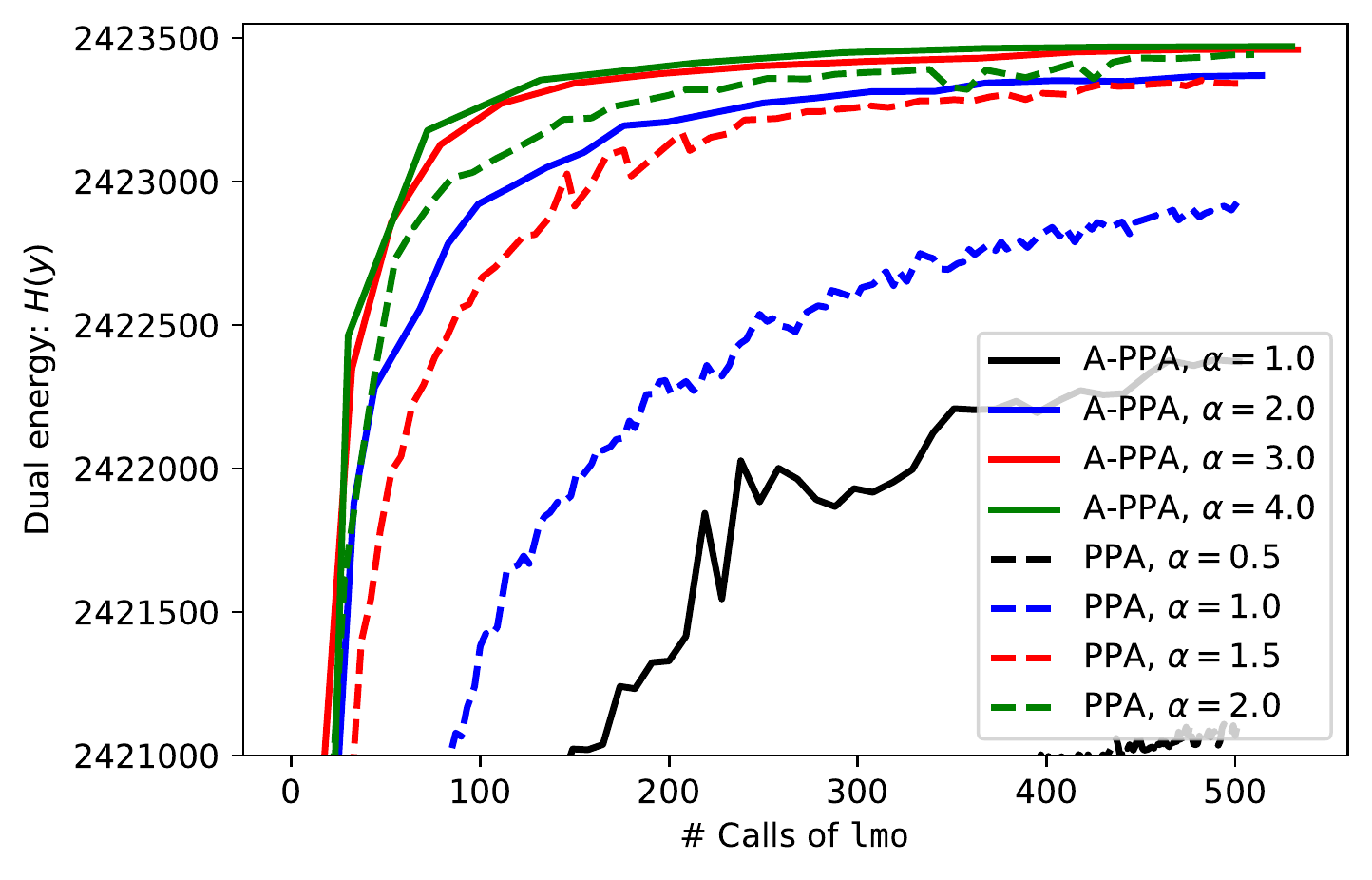}}\hfil
  \subfigure[]{\includegraphics[width=0.32\textwidth]{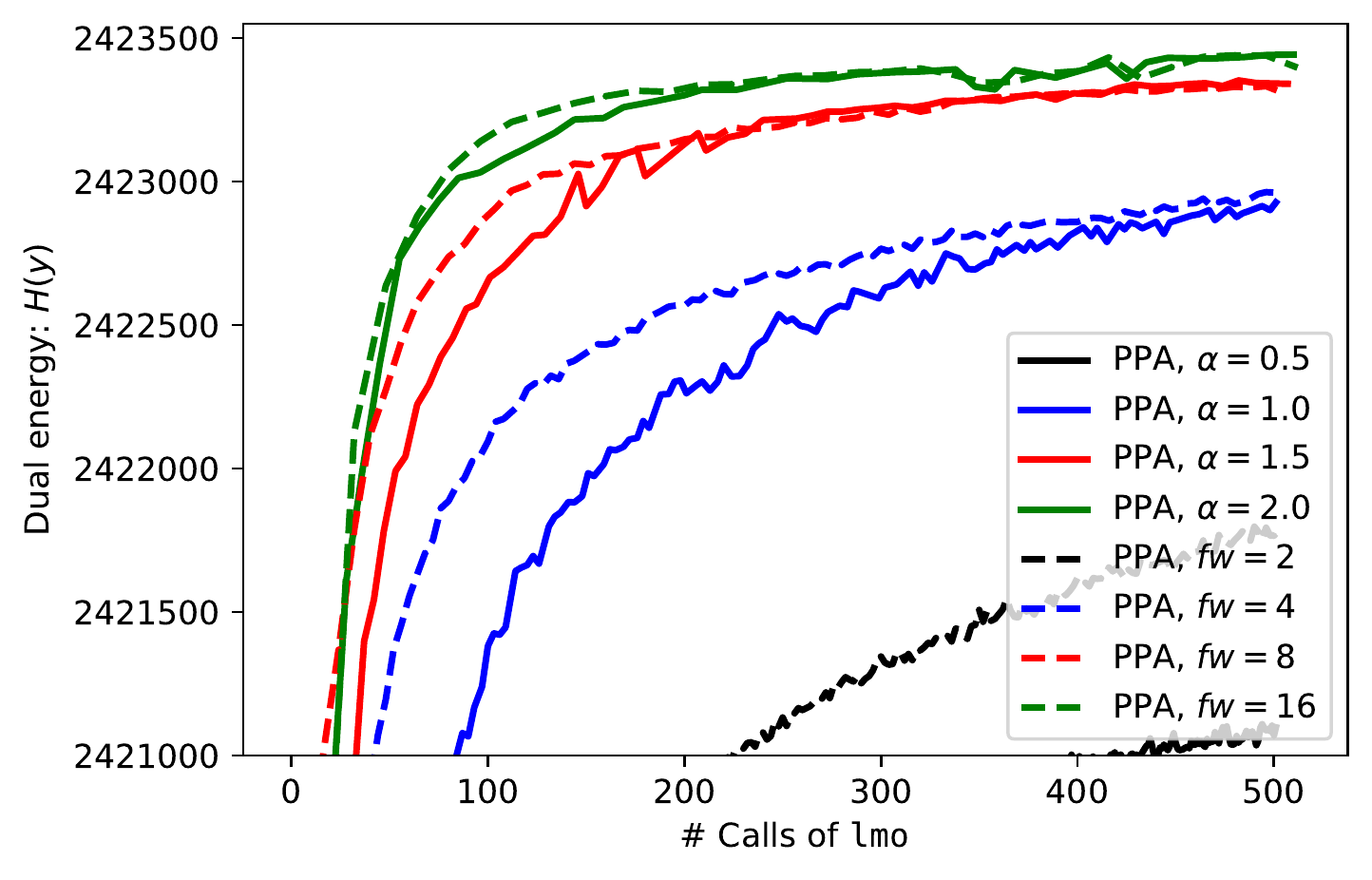}}\\
  \caption{Convergence plots of the dual energy for ``Vapnik'' (first
    row), "Einstein'' (second row), ``Tsukuba'' (third row) and
    ``Adirondack'' (fourth row). See the text for more details.}\label{fig:comparison}
\end{figure*}

In this section, we show preliminary results for solving MRFs arising
from computer vision. The goal is to solve the following discrete
minimization problem:
$$ \min_{\mbox{\footnotesize X}\in D^\calV} \;
E(\mbox{X}):=\sum_{i\in\calV} \theta_i(\mbox{X}_i) + \sum_{ij\in\calE}
\theta_{ij}(\mbox{X}_i,\mbox{X}_j)
$$ where $(\calV,\calE)$ is a 4-connected 2D grid graph, $D$ is a
finite set of labels, and $\theta_i(\cdot),\theta_{ij}(\cdot,\cdot)$
are given unary and pairwise costs, respectively.  We decompose the
problem into horizontal and vertical chains, and convert it to the
saddle point problem~\eqref{eq:saddle} as described in
Section~\ref{sec:example}.

We compare two versions of Algorithm~\ref{alg:FISTA}: accelerated
proximal point algorithm (denoted as A-PPA) with the aggressive choice
$t_n = (n+1)/2$ (which corresponds to the Aujol-Dossal scheme with $d=1,a=2$),
and the standard proximal point
algorithm (denoted as PPA) which is realized by setting $t_n=1$.
Their convergence rates after $n$ iterations are $O(1/n^2)$ and
$O(1/n)$ respectively, assuming that sequences $\{A_n\}$ and $\{B_n\}$
are bounded.  We invoke Algorithm~\ref{alg:FWeps} to minimize the
functions $F_{\gamma, \bar y}$ up to accuracy
\begin{equation}\label{eq:epsn}
  \varepsilon_n = {\tt gap}_0 \cdot n^{-\alpha},
\end{equation}
for a constant $\alpha>0$, where ${\tt gap}_0$ denotes the initial gap
of the function $F_{\gamma,\bar y}$. Note that for $O(1/n^2)$
convergence rate for A-PPA we would have to set $\alpha>4$, but in
practice smaller settings of $\alpha$ give larger speedups.

The smoothing parameter $\gamma$ is chosen by hand, but as a general
rule of thumb, it should be adapted to the scale of the primal
respectively dual cost functions. Note that the selection of an
optimal value of $\gamma$ is beneficial mainly from a practical point
of view as it does not influence the asymptotic convergence rate.  We
leave the selection of an optimal value of $\gamma$ for future work.

In the case of PPA we tested two different versions. The first version
is the method proposed in~\cite{Swoboda:CVPR19}. It uses a fixed
number of Frank-Wolfe steps and is hence denoted as PPA-fw. The second
version is motivated from our analysis for the case $t_n=1$. It
uses~\eqref{eq:epsn} for determining the accuracy of the subproblems
and hence we denote it by PPA-$\alpha$. Note that in order to
guarantee an $O(1/n)$ convergence of PPA, one needs $\alpha > 2$ but
as before, smaller values yield larger speedups.

Procedure ${\tt FWstep}$ was implemented as follows. We explicitly
maintain the current iterate $x$ as a convex combination of atoms. In
the beginning of ${\tt FWstep}$ we first run a standard Frank-Wolfe
step, and then re-optimize the objective over the current set of
atoms. For that one needs to solve a low-dimensional strongly convex,
quadratic subproblem over the unit simplex; we used the linearly
converging accelerated proximal gradient method described
in~\cite{Nesterov04}. The resulting method can be viewed as a version
of the BCG method~\cite{BCG}, which is known to be linearly convergent
on $\mathfrak{F}_{\tt strong}$.

\begin{remark}
Note that there is an extensive literature on FW variants.  Potential
alternatives to the method above include BCFW~\cite{BCFW} and its
variants~\cite{MP-BCFW,Osokin:ICML16},
DiCG~\cite{GarberMeshi:16,BashiriZhang:17}, and Frank-Wolfe with
in-face directions~\cite{FW:in-face}.
However, testing different Frank-Wolfe methods is
outside the scope of this paper; instead, we study what is the best
way to use a given FW method.
\end{remark}

Next, we describe the two vision applications that we used.

\myparagraph{Image denoising} The first application is a simple
Gaussian image denoising problem. The unary terms are given by a
quadratic potential function and the pairwise terms are given by a
truncated quadratic potential function. Hence, this model resembles a
discrete version of the celebrated Mumford-Shah
functional. Figure~\ref{fig:results} (a) and (b) show the noisy input
images together with their denoised versions. The value of $\gamma$
was set to $\gamma=1e-03$ in both cases.

\myparagraph{Stereo} The second application is a classical disparity
estimation problem from rectified stereo image
pairs~\cite{Scharstein2014}. Figure~\ref{fig:results} (c) and (d) show
the left input images together with the estimated disparity
images. For the ``Tsukuba'' image, the unaries are computed based on
the absolute color differences between the left and right input image.
The unaries of the ``Adirondack'' image are computed using a CNN-based
correlation network~\cite{knobelreiter2020belief}. In bother cases the
pairwise costs are given by a truncated absolute potential
function. The value of $\gamma$ was set to $\gamma=1e-01$ in the case
of ``Tsukuba'', and $\gamma=1.0$ in the case of ``Adirondack''.

\myparagraph{Results} In Figure~\ref{fig:comparison} we provide
convergence plots of the three different algorithms: A-PPA, PPA-fw and
PPA-$\alpha$ on all 4 examples. We plot the convergence of the
dual energy $H(y)$ over the total number of calls of $\lmo$.

The plots show that A-PPA systematically outperforms both the baseline
method PPA-fw (first column) as well as its variant PPA-$\alpha$
(second column). The results also indicate that the choice of $\alpha$
significantly influences the global convergence behavior. For example
in Figure~\ref{fig:comparison} (a) $\alpha=2$ yields the largest
speedup in the first iterations but is catched up by $\alpha=3$ in
later iterations of the algorithm. This suggest, that a more flexible
sequence $\varepsilon_n$ has the potential for an additional speedup
but such a development is left for future research.

From the plots in the third column one can also observe that the new
variant PPA-$\alpha$ has a small speedup over the existing baseline
method PPA-fw. This can be seen as a byproduct of our novel analysis
in the case $t_n=1$. Here one can also observe that values of $\alpha
< 2$ yield the fastest convergence in the first iterations of the
algorithm.

From a practical point of view, the plots show that $\alpha$ should be
chosen according to the desired accuracy of the solution. This is of
particular interest if one is only interested in a fast approximate
solution to the problem, for example if the MRF is used as the last
inference layer in a CNN.
\section{Conclusion}

In this work, we have proposed new primal-dual algorithms based on a
mixture of proximal and Frank-Wolfe algorithms to solve a class of
convex-concave saddle point problems arising in Lagrangian relaxations
of discrete optimization problems. As our main result, we have shown
after $O(n \log n)$ calls to $\lmo$ a $O(1/n)$ convergence rate in the
most general case (Alg.~\ref{alg:PD}) and a $O(1/n^2)$ convergence rate with certain
regularity assumptions on the dual objective (Alg.~\ref{alg:FISTA}). To the best of our
knowledge, this improves on the known rates from the literature. Our
preliminary numerical results also show an improved practical
performance of Alg.~\ref{alg:FISTA}
on MAP inference problems in computer vision.
Note, we have not implemented yet Alg.~\ref{alg:PD} since its rate is worse on
the application that we consider; at the moment the primary purpose of Alg.~\ref{alg:PD}
is to show which rates are achievable for different classes of saddle point problems.


\appendix

\section{Some results from convex optimization}\label{sec:Moreau}
In order to make the paper more self-contained, we recall here some
well-known definitions and results from convex optimization, which
will be useful later. The results with their proofs can for example be
found in the recent monograph by Amir Beck~\cite{beck2017first}.

The convex conjugate $f^*$ of an extended real valued function $f$ is
defined as
\begin{equation}\label{eq:conv-conj}
f^*(y) = \sup_x \scp{x}{y} - f(x),
\end{equation}
which, since being a pointwise maximum over linear affine functions,
is a convex lower semicontinuous function.

The infimal convolution between two proper functions $f$ and $g$
(written as $f \square g$) is defined by
\begin{equation}\label{eq:inf-conv}
(f\square g)(x)  = \inf_u f(u) + g(x-u) = \inf_{x=u+v} f(u) + g(v).
\end{equation}
which itself is a convex function, if $f$ is proper and convex, and
$g$ is convex and real valued.

The infimal convolution and the convex conjugate for two proper
functions $f,g$ are related via
\begin{equation}\label{eq:conj-of-inv-conv}
(f \square g)^* = f^* + g^*.
\end{equation}
Moreover, assuming that $f$ is proper and convex and $g$ is a
real-valued convex function, one also has that
\begin{equation}\label{eq:conj-of-sum}
(f + g)^* = f^* \square g^*.
\end{equation}

The Moreau envelope of a convex function $f$ with smoothness parameter
$\mu > 0$ is the infimal convolution of $f$ with $\mu^{-1}$ times a
quadratic function.
\begin{equation}\label{eq:mor-env}
m^\mu_f (x) = \inf_{x=u+v} f(u) + \frac{1}{2\mu}\norm{v}^2 = \left(f
\square \frac{1}{2\mu} \norm{\cdot}^2\right)(x).
\end{equation}
Its unique minimizing argument is precisely given by the proximal map
\begin{equation}\label{eq:prox}
\argmin_u f(u) + \frac{1}{2\mu}\norm{x-u}^2 = \prox_{\mu f}(x),
\end{equation}
so that the Moreau envelope can explicitly be written as
\begin{equation}\label{eq:mor-env-prox}
m^\mu_f (x) =  f(\prox_{\mu f}(x)) + \frac{1}{2\mu}\norm{x-\prox_{\mu f}(x)}^2.
\end{equation}
It is a well-known fact (and it easily follows from the monotonicity
of the subdifferential), that the proximal map is a firmly
nonexpansive operator, that is for any proper, convex lower
semicontinuous function $f$ it holds that
\begin{equation}\label{eq:firm-non-exp}
  \norm{\prox_{f}(x) - \prox_{f}(y)}^2 \leq \scp{\prox_{f}(x) -
    \prox_{f}(y)}{x-y}, \quad \forall x,y.
\end{equation}
Furthermore, from the Cauchy-Schwartz inequality it also follows that
the proximal map is a nonexpansive operator
\begin{equation}\label{eq:non-exp}
\norm{\prox_{f}(x) - \prox_{f}(y)} \leq \norm{x-y}, \quad \forall x,y.
\end{equation}
The Moreau envelop is continuously differentiable with an explicit
representation of the gradient in terms of its proximal map
\begin{align}\label{eq:mor-grad}
\nabla m^\mu_f (x) = \frac1\mu (x -\prox_{\mu f}(x)).
\end{align}
Furthermore, the gradient of the Moreau envelope is Lipschitz
continuous with parameter $\mu^{-1}$.

The next computation (which makes use of some results above) shows the
relation between the Moreau envelopes of a convex function $f$ and its
convex conjugate $f^*$.
\begin{align*}
  m^\mu_f (x) =&
  \inf_u f(u) + \frac{1}{2\mu}\norm{x}^2 - \frac1\mu\scp{x}{u} + \frac{1}{2\mu}\norm{u}^2\\
= &\frac{1}{2\mu}\norm{x}^2 - \frac1\mu \sup_u \scp{x}{u} - \mu f(u) - \frac{1}{2}\norm{u}^2\\
= & \frac{1}{2\mu}\norm{x}^2 - \frac1\mu \left(\mu f(u) + \frac{1}{2}\norm{\cdot}^2 \right)^*(x)\\
= & \frac{1}{2\mu}\norm{x}^2 - \left( f^* \square \frac{\mu}{2}\norm{\cdot}^2 \right)\left(\frac{x}{\mu}\right),
\end{align*}
from which it follows the beautiful relation
\begin{equation}\label{eq:mor-conj}
  \frac{1}{2\mu}\norm{x}^2 = m^\mu_f (x) + m^{\mu^{-1}}_{f^*} \left(\frac{x}{\mu}\right).
\end{equation}
Now, taking the gradient with respect to $x$ on both sides and
multiplying by $\mu$, recovers the celebrated Moreau identity
\begin{equation}\label{eq:mor-id}
x = \prox_{\mu f}(x) + \mu\prox_{\frac1\mu f^*}\left(\frac{x}{\mu}\right).
\end{equation}

\section{Analysis of Algorithm~\ref{alg:FISTA}: proof of Theorem~\ref{th:PPA}}\label{sec:proof:first}

\subsection{Proof preliminaries}

We first state a few technical results the will be used in the proof of Theorem~\ref{th:PPA}.

\begin{lemma}\label{lemma:ALSKFJASF}
Suppose that $(\hat x,\hat y)\approx_\varepsilon\argmin_{x,y}[F_{\gamma,\bar y}(x)-H_{\gamma,\bar y}(y)]$. Then
$\hat y\approx_{\varepsilon-\delta} \argmax_{y} \calL_{\gamma,\bar y}(\hat x,y)$
where $\delta=\calL(\hat x,\hat y)-H(\hat y)\ge 0$.
\end{lemma}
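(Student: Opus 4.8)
The plan is to simply unwind the definitions, exploiting the fact that the proximal regularizer $-\frac{1}{2\gamma}\|y-\bar y\|^2$ appearing in $\calL_{\gamma,\bar y}$ depends only on $y$ and hence survives unchanged under the minimization over $x$.

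First I would translate the hypothesis into a concrete inequality. Strong duality for the smoothed problem gives $\min_x \calF_{\gamma,\bar y}(x)=\max_y H_{\gamma,\bar y}(y)$, so $\min_{(x,y)}[\calF_{\gamma,\bar y}(x)-H_{\gamma,\bar y}(y)]=0$, and the assumption $(\hat x,\hat y)\approx_\varepsilon\argmin_{x,y}[\calF_{\gamma,\bar y}(x)-H_{\gamma,\bar y}(y)]$ reads $\calF_{\gamma,\bar y}(\hat x)-H_{\gamma,\bar y}(\hat y)\le\varepsilon$.

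Next I would rewrite $\delta$ in the smoothed notation. Since $\calL_{\gamma,\bar y}(x,y)=\calL(x,y)-\frac{1}{2\gamma}\|y-\bar y\|^2$ with the extra term free of $x$, we have $H_{\gamma,\bar y}(\hat y)=\min_x\calL_{\gamma,\bar y}(x,\hat y)=H(\hat y)-\frac{1}{2\gamma}\|\hat y-\bar y\|^2$, and therefore $\calL_{\gamma,\bar y}(\hat x,\hat y)-H_{\gamma,\bar y}(\hat y)=\calL(\hat x,\hat y)-H(\hat y)=\delta$; moreover $\delta\ge 0$ because $H(\hat y)=\min_x\calL(x,\hat y)\le\calL(\hat x,\hat y)$.

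Finally, using $\max_y\calL_{\gamma,\bar y}(\hat x,y)=\calF_{\gamma,\bar y}(\hat x)$ (the definition of $\calF_{\gamma,\bar y}$), I would combine the last two steps:
\[
\max_y\calL_{\gamma,\bar y}(\hat x,y)-\calL_{\gamma,\bar y}(\hat x,\hat y)=\bigl[\calF_{\gamma,\bar y}(\hat x)-H_{\gamma,\bar y}(\hat y)\bigr]-\delta\le\varepsilon-\delta,
\]
which is exactly the assertion $\hat y\approx_{\varepsilon-\delta}\argmax_y\calL_{\gamma,\bar y}(\hat x,y)$; here $\varepsilon-\delta\ge 0$ since $\calL_{\gamma,\bar y}(\hat x,\hat y)\le\calF_{\gamma,\bar y}(\hat x)$. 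I do not anticipate a genuine obstacle — the whole argument is a few lines of definition-chasing; the only point requiring a little care is keeping track of which terms depend on $x$ versus $y$ when passing between $\calL$ and $\calL_{\gamma,\bar y}$ and when invoking strong duality for the smoothed problem.
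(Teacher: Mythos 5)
Your proof is correct and follows essentially the same route as the paper's: both reduce the claim to the chain $\max_y\calL_{\gamma,\bar y}(\hat x,y)-\calL_{\gamma,\bar y}(\hat x,\hat y)=\calF_{\gamma,\bar y}(\hat x)-H_{\gamma,\bar y}(\hat y)-\delta\le\varepsilon-\delta$, using that the proximal term depends only on $y$ (so it cancels, giving $\calL_{\gamma,\bar y}(\hat x,\hat y)-H_{\gamma,\bar y}(\hat y)=\delta$) together with strong duality for the smoothed problem to turn the hypothesis into $\calF_{\gamma,\bar y}(\hat x)-H_{\gamma,\bar y}(\hat y)\le\varepsilon$. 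You merely spell out the steps (strong duality, the cancellation, $\varepsilon-\delta\ge 0$) that the paper leaves implicit.
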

\begin{proof}
The claim follows from the following inequalities:
$$
\max_{y}\calL_{\gamma,\bar y}(\hat x,y)-\calL_{\gamma,\bar y}(\hat x,\hat y)
=F_{\gamma,\bar y}(\hat x)-\calL_{\gamma,\bar y}(\hat x,\hat y)
=F_{\gamma,\bar y}(\hat x)-H_{\gamma,\bar y}(\hat y)-\delta
\le \varepsilon-\delta
$$
\end{proof}

\begin{lemma}[{\cite[Lemma 2]{Schmidt:11},\cite[Lemma 2.5]{AujolDossal:15}}]\label{lemma:GJHAS}
Suppose that $\hat y\approx_{\varepsilon} \prox_{\gamma\phi}(\bar y)$ for a proper convex l.s.c.\ function $\phi$.
Then there exists $r$ with $||r||\le \sqrt{2\gamma\varepsilon}$ such that
$$
\phi(z)\ge \phi(\hat y)+\left\langle \frac{\bar y-\hat y-r}{\gamma},z-\hat y\right\rangle-\varepsilon \qquad\qquad\forall z\in\calY
$$
\end{lemma}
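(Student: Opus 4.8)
The plan is to reduce everything to the \emph{exact} proximal point $y^\star := \prox_{\gamma\phi}(\bar y)$ and then transfer the exact optimality condition there into an $\varepsilon$-subgradient statement at $\hat y$, with the choice $r := y^\star - \hat y$.

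First I would record the basic facts about $\psi(y) := \phi(y) + \frac{1}{2\gamma}\|y-\bar y\|^2$. Since $\phi$ is proper convex l.s.c.\ and the quadratic term is $\tfrac1\gamma$-strongly convex and coercive, $\psi$ has a unique minimizer, which is exactly $y^\star$. Strong convexity gives $\psi(\hat y) \ge \psi(y^\star) + \tfrac{1}{2\gamma}\|\hat y - y^\star\|^2$, while the definition of $\hat y \approx_\varepsilon \prox_{\gamma\phi}(\bar y)$ gives $\psi(\hat y) \le \psi(y^\star) + \varepsilon$; combining the two yields $\|r\| = \|\hat y - y^\star\| \le \sqrt{2\gamma\varepsilon}$, which is the claimed bound. (In particular $\hat y,y^\star\in\dom\phi$, so $\phi$ is finite at these points; for $z\notin\dom\phi$ the asserted inequality is trivial.)

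Next, from $0\in\partial\psi(y^\star)$ and differentiability of the quadratic term, $g := \tfrac1\gamma(\bar y - y^\star) \in \partial\phi(y^\star)$, and by definition of $r$ we have $g = \tfrac1\gamma(\bar y - \hat y - r)$, which is exactly the vector appearing in the statement. The subgradient inequality at $y^\star$ reads $\phi(z) \ge \phi(y^\star) + \langle g, z - y^\star\rangle$ for all $z$. Subtracting $\phi(\hat y) + \langle g, z - \hat y\rangle$ and using $\langle g, z-y^\star\rangle - \langle g, z-\hat y\rangle = \langle g, \hat y - y^\star\rangle$, the target inequality reduces to showing $\phi(y^\star) - \phi(\hat y) + \langle g, \hat y - y^\star\rangle \ge -\varepsilon$.

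Finally I would bound this last quantity. From $\psi(\hat y)\le\psi(y^\star)+\varepsilon$ we get $\phi(y^\star) - \phi(\hat y) \ge \tfrac{1}{2\gamma}\big(\|\hat y - \bar y\|^2 - \|y^\star - \bar y\|^2\big) - \varepsilon$, and substituting $g = \tfrac1\gamma(\bar y - y^\star)$ one checks, with $a=\hat y-\bar y$, $b=y^\star-\bar y$ and the elementary identity $\|a\|^2 - \|b\|^2 - 2\langle b, a-b\rangle = \|a-b\|^2$, that
\[
\frac{1}{2\gamma}\|\hat y - \bar y\|^2 - \frac{1}{2\gamma}\|y^\star - \bar y\|^2 + \langle g, \hat y - y^\star\rangle = \frac{1}{2\gamma}\|\hat y - y^\star\|^2 \ge 0 .
\]
Hence $\phi(y^\star) - \phi(\hat y) + \langle g, \hat y - y^\star\rangle \ge \tfrac{1}{2\gamma}\|\hat y - y^\star\|^2 - \varepsilon \ge -\varepsilon$, which closes the argument. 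The only point requiring genuine care is the bookkeeping of the quadratic cross-terms in this last display; the rest is a direct application of strong convexity and the definition of $\approx_\varepsilon$.
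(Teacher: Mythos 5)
Your argument is correct, and every step checks out: the strong-convexity bound $\psi(\hat y)\ge\psi(y^\star)+\tfrac{1}{2\gamma}\|\hat y-y^\star\|^2$ combined with $\psi(\hat y)\le\psi(y^\star)+\varepsilon$ gives $\|r\|\le\sqrt{2\gamma\varepsilon}$ for $r=y^\star-\hat y$, and the cross-term identity $\|a\|^2-\|b\|^2-2\langle b,a-b\rangle=\|a-b\|^2$ correctly reduces the target inequality to the exact subgradient inequality at $y^\star$ plus a nonnegative remainder. Note, however, that the paper does not prove this lemma at all: it imports it verbatim from the cited references (Schmidt et al.\ and Aujol--Dossal), so the relevant comparison is with their argument. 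The classical proof goes through $\varepsilon$-subdifferential calculus: $\varepsilon$-optimality of $\hat y$ for $\psi=\phi+\tfrac{1}{2\gamma}\|\cdot-\bar y\|^2$ gives $0\in\partial_\varepsilon\psi(\hat y)$, one then splits $\varepsilon=\varepsilon_1+\varepsilon_2$ across the two summands and uses the explicit description of the $\varepsilon_1$-subdifferential of the quadratic term as a ball of radius $\sqrt{2\gamma\varepsilon_1}/\gamma$ around its gradient, which is where the vector $r$ and the bound $\|r\|\le\sqrt{2\gamma\varepsilon}$ come from. Your route replaces this machinery by the exact proximal point: it is more elementary and self-contained (no $\varepsilon$-subdifferential sum rule needed), produces an explicit witness $r=y^\star-\hat y$, and in fact yields the slightly stronger error term $\varepsilon-\tfrac{1}{2\gamma}\|r\|^2$ in place of $\varepsilon$; what it gives up is the flexibility of the $\varepsilon_1/\varepsilon_2$ trade-off, which describes a whole family of admissible vectors $r$ rather than a single one --- but for the lemma as stated, and for its use in the paper, your single choice is all that is required.
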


\begin{lemma}\label{lemma:NFLAKSN}
Suppose that $(\hat x,\hat y)\approx_\varepsilon\argmin_{x,y}[F_{\gamma,\bar y}(x)-H_{\gamma,\bar y}(y)]$. Then
$$
H(\hat y)\ge \calL(\hat x,z)+\frac{||\hat y-z||^2-||z-\bar y||^2+||\hat y-\bar y||^2}{2\gamma}-\varepsilon -  \sqrt{\frac{2\varepsilon}{\gamma}} \cdot||\hat y-z||
\qquad\quad\forall z\in\calY
$$
\end{lemma}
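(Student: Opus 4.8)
The plan is to reduce the statement to the inexact--prox subgradient inequality of Lemma~\ref{lemma:GJHAS}, after recognizing the inner maximization in $y$ as an approximate proximal step, and then to convert the resulting inner product into the quadratic term by polarization.

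First I would apply Lemma~\ref{lemma:ALSKFJASF}: setting $\delta=\calL(\hat x,\hat y)-H(\hat y)\ge 0$, it gives $\hat y\approx_{\varepsilon-\delta}\argmax_y\calL_{\gamma,\bar y}(\hat x,y)$ (in particular $\varepsilon-\delta\ge 0$). Completing the square in $y$ in
\[
\calL_{\gamma,\bar y}(\hat x,y)=\langle K\hat x,y\rangle+f_\calP(\hat x)-h^\ast(y)-\tfrac{1}{2\gamma}\|y-\bar y\|^2
\]
shows that, up to an additive constant independent of $y$, maximizing $\calL_{\gamma,\bar y}(\hat x,\cdot)$ is the same as minimizing $h^\ast(y)+\tfrac{1}{2\gamma}\|y-(\bar y+\gamma K\hat x)\|^2$; hence, in the notation of the paper, $\hat y\approx_{\varepsilon-\delta}\prox_{\gamma h^\ast}(\bar y+\gamma K\hat x)$. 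Then I would invoke Lemma~\ref{lemma:GJHAS} with $\phi=h^\ast$, with its ``$\bar y$'' replaced by $\bar y+\gamma K\hat x$ and its ``$\varepsilon$'' replaced by $\varepsilon-\delta$: there is $r$ with $\|r\|\le\sqrt{2\gamma(\varepsilon-\delta)}\le\sqrt{2\gamma\varepsilon}$ such that for all $z\in\calY$,
\[
h^\ast(z)\ge h^\ast(\hat y)+\Big\langle\tfrac{\bar y+\gamma K\hat x-\hat y-r}{\gamma},\,z-\hat y\Big\rangle-(\varepsilon-\delta).
\]

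Next I would write $H(\hat y)=\calL(\hat x,\hat y)-\delta$ and subtract $\calL(\hat x,z)$, using $\calL(\hat x,w)=\langle K\hat x,w\rangle+f_\calP(\hat x)-h^\ast(w)$, to obtain $H(\hat y)-\calL(\hat x,z)=\langle K\hat x,\hat y-z\rangle+h^\ast(z)-h^\ast(\hat y)-\delta$. Substituting the displayed inequality, the $\langle K\hat x,\cdot\rangle$ contributions cancel (the $\gamma K\hat x/\gamma$ part of the inner product against $\langle K\hat x,\hat y-z\rangle$), and the $+\delta$ coming from $-(\varepsilon-\delta)$ cancels the $-\delta$ from the definition of $\delta$, leaving
\[
H(\hat y)-\calL(\hat x,z)\ge\tfrac{1}{\gamma}\langle\bar y-\hat y,\,z-\hat y\rangle-\tfrac{1}{\gamma}\langle r,\,z-\hat y\rangle-\varepsilon.
\]
Finally I would apply the polarization identity $\langle\bar y-\hat y,z-\hat y\rangle=\tfrac12\big(\|\hat y-\bar y\|^2+\|\hat y-z\|^2-\|z-\bar y\|^2\big)$, which produces exactly the quadratic term in the claim, and bound $-\tfrac{1}{\gamma}\langle r,z-\hat y\rangle\ge-\tfrac{1}{\gamma}\|r\|\,\|\hat y-z\|\ge-\sqrt{\tfrac{2\varepsilon}{\gamma}}\,\|\hat y-z\|$ by Cauchy--Schwarz and $\|r\|\le\sqrt{2\gamma\varepsilon}$. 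This gives the asserted inequality.

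The computation itself is routine; the only point that needs care is the accuracy bookkeeping. The nonnegative slack $\delta=\calL(\hat x,\hat y)-H(\hat y)$ degrades the prox accuracy from $\varepsilon$ to $\varepsilon-\delta$ in the first step, but it re-enters with the opposite sign when passing from $\calL(\hat x,\hat y)$ to $H(\hat y)$, so the two contributions cancel and only $\varepsilon$ (and the square-root term $\sqrt{2\varepsilon/\gamma}$, where I use $\varepsilon-\delta\le\varepsilon$ to enlarge the bound on $\|r\|$) survives in the final estimate.
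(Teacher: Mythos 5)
Your proof is correct and follows essentially the same route as the paper: Lemma~\ref{lemma:ALSKFJASF} to get the $\varepsilon-\delta$ accuracy, Lemma~\ref{lemma:GJHAS} for the inexact-prox subgradient inequality, then polarization and Cauchy--Schwarz, with the same cancellation of $\delta$. The only (cosmetic) difference is that the paper applies Lemma~\ref{lemma:GJHAS} directly to $\phi=-\calL(\hat x,\cdot)$ with prox center $\bar y$, whereas you apply it to $h^\ast$ with the shifted center $\bar y+\gamma K\hat x$ after completing the square; the two are equivalent and yield identical algebra.
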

\begin{proof}
Denote 
 $\phi(z)=-\calL(\hat x,z)$. By Lemma~\ref{lemma:ALSKFJASF}, we have $\hat y\approx_{\varepsilon-\delta} \prox_{\gamma\phi}(\bar y)$
where $\delta=\calL(\hat x,\hat y)-H(\hat y)\ge 0$.
By Lemma~\ref{lemma:GJHAS}, there exists $r$ with $||r||\le \sqrt{2\gamma(\varepsilon-\delta)}\le\sqrt{2\gamma\varepsilon}$ such that
\begin{eqnarray*}
-H(\hat y)
\;\;=\;\;\delta+\phi(\hat y)
&\le&\delta+\phi(z)-\left\langle \frac{\bar y-\hat y-r}{\gamma},z-\hat y\right\rangle+\varepsilon-\delta \\
&=&-\calL(\hat x,z)-\tfrac{1}{\gamma}\left\langle \bar y-\hat y,z-\hat y\right\rangle  + \tfrac{1}{\gamma}\left\langle r,z-\hat y\right\rangle + \varepsilon \\
&=&-\calL(\hat x,z)+\tfrac{1}{2\gamma}\left(   ||\bar y-z||^2-||\hat y-z||^2 - ||\hat y-\bar y||^2    \right)  + \tfrac{1}{\gamma}\left\langle r,z-\hat y\right\rangle + \varepsilon
\end{eqnarray*}
It remains to observe that $\left\langle r,z-\hat y\right\rangle\le  ||r||\cdot ||\hat y-z||\le \sqrt{2\gamma\varepsilon}\cdot ||\hat y-z||$.
\end{proof}
The following identity will also be useful (it follows from eq.~\eqref{eq:bary:update} and~\eqref{eq:un:def}):
\begin{equation}\label{eq:LAHFASG}
\bar y_n=\left(1-\frac{1}{t_{n+1}}\right)y_n + \frac{1}{t_{n+1}} u_n
\end{equation}

\subsection{Proof of Theorem~\ref{th:PPA}}

\begin{proof}
Let us fix $y\in\calY$ (to be determined later). For brevity, denote $a_k=a_k(y)=||u_k-y||$ for $k\ge 0$.
Now fix $k\ge 1$, and  apply Lemma~\ref{lemma:NFLAKSN} to $\bar y=\bar y_{k-1}$, $\hat y=y_k$ and $z=(1-\frac{1}{t_k})y_{k-1} + \frac{1}{t_k}y$. We get
\begin{equation}\label{eq:POASJGASF}
H(y_k)\ge \calL(x_k,z)+\frac{||y_k-z||^2-||z-\bar y_{k-1}||^2}{2\gamma} -\varepsilon_k - 
\sqrt{\frac{2\varepsilon_k}{\gamma}} \cdot||y-y_k||
\end{equation}
Using~\eqref{eq:LAHFASG}, it can be checked that $y_k-z=\frac{1}{t_k}(u_k-y)$ and $z-\bar y_{k-1}=\frac{1}{t_k}(y-u_{k-1})$.
Furthermore, by concavity of function $\calL(x_k,\cdot)$ we get
\begin{eqnarray*}
\calL(x_k,y)
&\ge& \left(1-\frac{1}{t_k}\right)\calL(x_k,y_{k-1}) + \frac{1}{t_k}\calL(x_k,y) \\
&\ge& \left(1-\frac{1}{t_k}\right)H(y_{k-1}) + \frac{1}{t_k}\calL(x_k,y)
\end{eqnarray*}
Plugging these equations into~\eqref{eq:POASJGASF} and multiplying by $t_k^2$ gives
$$
t_k^2 H(y_k)\ge (t_k^2-t_k)H(y_{k-1}) + t_k\calL(x_k,y) +\frac{a_k^2-a_{k-1}^2}{2\gamma }   -t_k^2\varepsilon_{k} - 
t_k\sqrt{\frac{2\varepsilon_k}{\gamma}}\cdot a_k 
$$
Let us sum this inequality over $k=1,\ldots,n$ and move terms with $H(\cdot)$ to the LHS. We obtain
$$
t_n^2 H(y_n)+\sum_{k=1}^{n-1} \rho_{k+1} H(y_k) \ge \sum_{k=1}^n t_k \calL(x_k,y) + \frac{a_n^2-a_0^2}{2\gamma} -\sum_{k=1}^n t_k^2\varepsilon_k - \sum_{k=1}^n t_k\sqrt{\frac{2\varepsilon_k}{\gamma}}\cdot a_k
$$
Note that the LHS equals $T_nH(y^\star)-W_n$. By convexity of function $\calL(\cdot,y)$, we have $\sum_{k=1}^n t_k\calL(x_k,y)\ge T_n\calL(x^e_n,y)$.
Therefore,
\begin{eqnarray}\label{eq:OAISGA}
T_nH(y^\star)-W_n&\ge& T_n \calL(x^e_n,y) + \frac{||u_n-y||^2}{2\gamma} - \frac{\tilde C_n(y)}{2\gamma} 
\end{eqnarray}
where we denoted
\begin{eqnarray*}
\tilde C_n(y)&=& ||y_0-y||^2 + 2\sum_{k=1}^n \gamma t_k^2\varepsilon_k + 2\sum_{k=1}^n t_k\sqrt{2\gamma\varepsilon_k}\cdot ||u_k-y|| 
\end{eqnarray*}
Equivalently,
\begin{eqnarray}\label{eq:JHBASJBFAKSFAS}
T_n \left[\calL(x^e_n,y)-H(y^\star)\right]+W_n + \frac{1}{2\gamma}||u_n-y||^2  
&\le& \frac{\tilde C_n(y)}{2\gamma}
\end{eqnarray}

\end{proof}

Our next goal is to upper bound quantities $||u_k-y^\star||$. 
We use the same argument as in~\cite{Schmidt:11,AujolDossal:15}. It relies on the following fact.
\begin{lemma}{\cite[Lemma 1]{Schmidt:11}}\label{lemma:Schmidt:seq}
Assume that a nonnegative sequence $\{a_n\}$ satisfies $a_n^2\le S_n+\sum_{k=1}^n\lambda_ka_k$
where $\{S_k\}$ is a nondecreasing sequence, $S_0\ge a_0^2$ and $\lambda_k\ge 0$ for all $k$. Then
$$
a_n\le A_n + \sqrt{S_n+A_n^2}, \qquad A_n=\frac 12 \sum_{k=1}^n \lambda_k 
$$
\end{lemma}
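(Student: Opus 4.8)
The plan is to bound the running maximum $m_n := \max_{0 \le k \le n} a_k$ and then solve a single scalar quadratic inequality. First I would observe that the inequality $a_k^2 \le S_n + \sum_{j=1}^n \lambda_j a_j$ holds \emph{uniformly} for all $k$ with $0 \le k \le n$. Indeed, for $1 \le k \le n$ the hypothesis gives
\[
a_k^2 \;\le\; S_k + \sum_{j=1}^k \lambda_j a_j \;\le\; S_n + \sum_{j=1}^n \lambda_j a_j,
\]
where the second step uses monotonicity of $\{S_k\}$ (so $S_k \le S_n$) and nonnegativity of the extra terms $\lambda_j a_j$; for $k=0$ the assumption $S_0 \ge a_0^2$ together with $S_0 \le S_n$ yields $a_0^2 \le S_n \le S_n + \sum_{j=1}^n \lambda_j a_j$.

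Next I would replace each $a_j$ appearing on the right-hand side by $m_n$, which is legitimate since $\lambda_j \ge 0$ and $a_j \le m_n$ for $j \le n$. This gives $a_k^2 \le S_n + m_n \sum_{j=1}^n \lambda_j = S_n + 2 A_n m_n$ for every $k \le n$. Taking the maximum over such $k$ on the left-hand side produces the self-referential estimate $m_n^2 \le S_n + 2 A_n m_n$, i.e. $m_n^2 - 2 A_n m_n - S_n \le 0$.

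Finally I would solve this quadratic inequality in $m_n$. Its discriminant is nonnegative (as $S_n \ge S_0 \ge a_0^2 \ge 0$), and the larger root of $t^2 - 2 A_n t - S_n$ is $A_n + \sqrt{A_n^2 + S_n}$ while the smaller root is $A_n - \sqrt{A_n^2+S_n} \le 0 \le m_n$; hence $m_n \le A_n + \sqrt{A_n^2 + S_n}$. Since $n$ belongs to the range $0 \le k \le n$, we have $a_n \le m_n$, which is exactly the claimed bound $a_n \le A_n + \sqrt{S_n + A_n^2}$.

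I do not expect a genuine obstacle: the argument is elementary once one passes to the running maximum. The only point requiring care is the uniform-in-$k$ bound $a_k^2 \le S_n + \sum_{j=1}^n \lambda_j a_j$ (needed so that taking the maximum over $k$ is valid), which is precisely where each of the three hypotheses — monotonicity of $\{S_k\}$, nonnegativity of $\lambda_j$, and the base case $S_0 \ge a_0^2$ — is used exactly once.
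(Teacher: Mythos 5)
Your proof is correct. Note that the paper does not prove this statement at all --- it is imported verbatim from \cite[Lemma~1]{Schmidt:11} --- so there is no in-paper argument to compare against; your running-maximum argument is a valid self-contained proof, and it is essentially the standard one for this inequality (Schmidt et al.\ organize it as an induction, but the core step is the same quadratic estimate $m_n^2\le S_n+2A_nm_n$ followed by taking the larger root). Each hypothesis (monotonicity of $\{S_k\}$, $\lambda_k\ge 0$ together with $a_k\ge 0$, and $S_0\ge a_0^2$) is used exactly where you say, and the passage from $m_n^2-2A_nm_n-S_n\le 0$ to $m_n\le A_n+\sqrt{A_n^2+S_n}$ is legitimate since $S_n\ge S_0\ge a_0^2\ge 0$.
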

Plugging $y=y^\star$ into~\eqref{eq:JHBASJBFAKSFAS} and observing that $\calL(x^e_n,y^\star)-H(y^\star)\ge 0$
 gives
$$
||u_n-y^\star||^2\le \tilde C_n(y^\star)
$$
Therefore, the sequence $\{a_n\}$ defined via $a_n=||u_n-y_\star||$ satisfies the precondition of Lemma~\ref{lemma:Schmidt:seq} with
$$
B_n=\sum_{k=1}^n\gamma t_k^2\varepsilon_k
\qquad
S_n=||y_0-y^\star||^2+2B_n
\qquad
\lambda_k =2t_k\sqrt{2\gamma\varepsilon_k}
\qquad
A_n=\sum_{k=1}^nt_k\sqrt{2\gamma\varepsilon_k}
$$
We can thus conclude
$$
||u_n-y^\star||\le A_n+\sqrt{||y_0-y^\star||^2+2B_n+A_n^2}\le ||y_0-y^\star|| + 2A_n+\sqrt{2B_n}
$$
By observing that $||u_k-y||\le ||u_k-y^\star|| + ||y-y^\star||$ we obtain
\begin{eqnarray*}
||u_k-y||
&\le& ||y-y^\star||+||y_0-y^\star|| + 2A_k+\sqrt{2B_k} \\
&\le& ||y-y^\star||+||y_0-y^\star|| + 2A_n+\sqrt{2B_n} \qquad\quad\forall 1\le k\le n
\end{eqnarray*}
Plugging this bound into~\eqref{eq:OAISGA} gives the desired result:
\begin{eqnarray*}
 \tilde C_n(y)&\le& 
||y_0-y||^2+2B_n+2A_n\cdot \left(
||y-y^\star||+||y_0-y^\star|| + 2A_n+\sqrt{2B_n}
\right)
\;\;=\;\; C_n(y)
\end{eqnarray*}


\section{Algorithm~\ref{alg:FISTA} for solving $\min_x \{f_\calP(x)\:|\:Ax=b\}$: proof of Theorem~\ref{th:LinearConstraint}}

\subsection{Proof of Theorem~\ref{th:LinearConstraint}(a)}
\myparagraph{(a)} Let us fix $n\ge 1$. Using Theorem~\ref{th:PPA} and the strong duality assumption ($H(y^\star)=f_\calP(x^\star)$), we obtain
\begin{equation} \label{eq:GNALASF}
  f_\calP(x^e_n)-f_\calP(x^\star)+\langle y,Ax^e_n-b\rangle
\le \frac{C_n(y)}{2\gamma T_n}
\end{equation}
Plugging $y=0$ gives eq.~\eqref{eq:LinearConstraint:a}. Next, from~\eqref{eq:Cny} we obtain
\begin{align*}
C_n(y)&\le ||y_0||^2+2 ||y_0||\cdot ||y|| + ||y||^2 +2A_n\left(||y|| + ||y^\star||+||y_0-y^\star||+2A_n+\sqrt{2B_n}\right)+2B_n \\
&= ||y||^2 + P\cdot ||y||+Q
\end{align*}
where we denoted
$$
P=2(||y_0||+A_n)\qquad
Q=||y_0||^2+2A_n\left( ||y^\star||+||y_0-y^\star||+2A_n+\sqrt{2B_n}\right)+2B_n
$$
Now denote $r=Ax^e_n-b$, $\Delta=\max\{f_\calP(x^\star)-f_\calP(x^e_n),0\}$, and for scalar $z> 0$ define vector $y=z\cdot \frac{r}{||r||}$
(assuming that $||r||\ne 0$).
From eq.~\eqref{eq:GNALASF} we get
$$
z\cdot ||r||
=\langle y, r\rangle 
\le \Delta + \frac{||y||^2 + P\cdot ||y||+Q}{2\gamma T_n}
= \Delta + \frac{z^2 + P\cdot z+Q}{2\gamma T_n}
$$
and therefore
$$
||r||\le \frac{z}{2\gamma T_n} + \left(\Delta + \frac{Q}{2\gamma T_n}\right)\frac{1}{z} + \frac{P}{2\gamma T_n}
$$
Since $\min_{z>0} \left[\frac z\alpha + \frac \beta z\right]=2\sqrt{\frac{\beta}{\alpha}}$ for $\alpha,\beta>0$, we obtain the claim in eq.~\eqref{eq:LinearConstraint:b}:
$$
||r||
\le 2\sqrt{\frac{\Delta + \frac{Q}{2\gamma T_n}}{2\gamma T_n}} + \frac{P}{2\gamma T_n}
\le \sqrt{\frac{2\Delta }{\gamma T_n}} + \frac{\sqrt{Q}}{\gamma T_n}   + \frac{P}{2\gamma T_n}
= \sqrt{\frac{2\Delta }{\gamma T_n}} + \frac{P/2+\sqrt{Q}}{\gamma T_n}
$$

\subsection{Proof of Theorem~\ref{th:LinearConstraint}(b)}
In this section for a point $x\in\mathbb R^d$ and convex set $C\subseteq\mathbb R^d$ we denote $x_C$ to be the projection of $x$ to $C$, and ${\tt dist}(x,C)=||x-x_C||$.
We will prove the following key result.
\begin{theorem}\label{th:polytopeAngle}
Consider polytope $\calP\subseteq\mathbb R^d$ and linear subspace ${\cal{H}}\subseteq{\mathbb R}^d$ with $\calQ=\calP\cap \cal{H}\ne \varnothing$.
There exists constant $\beta$ such that
\begin{equation}\label{eq:GALSKAS}
{\tt dist}(x,\calQ) \le \beta \cdot {\tt dist}(x,\calH)\qquad\forall x\in\calP
\end{equation}
\end{theorem}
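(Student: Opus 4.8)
The statement is an instance of Hoffman's error bound for polyhedral systems, and the plan is to reduce it to that. First I would fix a half-space description of the polytope, $\calP=\{x\in\mathbb R^d: Ax\le b\}$ (possible since a polytope is a bounded polyhedron), together with a matrix $N$ whose rows form a basis of the orthogonal complement $\calH^\perp$, so that $\calH=\ker N$. Since the rows of $N$ annihilate $\calH$, we have $Nx=N(x-P_\calH x)$, hence
\[
\|Nx\|\ \le\ \|N\|\cdot{\tt dist}(x,\calH)\qquad\text{for all }x\in\mathbb R^d,
\]
where $\|N\|$ is the operator norm of $N$ and $P_\calH$ is orthogonal projection onto $\calH$. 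Note also that $\calQ=\calP\cap\calH=\{x: Ax\le b,\ Nx=0\}$, which is nonempty by hypothesis.

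Next I would invoke Hoffman's error bound applied to this last polyhedron: there is a constant $\theta>0$, depending only on $A$ and $N$, such that for every $x\in\mathbb R^d$
\[
{\tt dist}(x,\calQ)\ \le\ \theta\bigl(\|(Ax-b)_+\|+\|Nx\|\bigr),
\]
where $(\cdot)_+$ denotes the coordinate-wise nonnegative part. For $x\in\calP$ we have $Ax\le b$, so $(Ax-b)_+=0$, and combining with the first displayed inequality,
\[
{\tt dist}(x,\calQ)\ \le\ \theta\,\|Nx\|\ \le\ \theta\,\|N\|\cdot{\tt dist}(x,\calH),
\]
which is exactly~\eqref{eq:GALSKAS} with $\beta=\theta\,\|N\|$.

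The substantive ingredient is Hoffman's bound itself, so that is where I expect the real work to be if one wants a self-contained account. I would obtain it in the standard way: for $x$ with projection $\bar x$ onto $\calQ$, the optimality (KKT) conditions of this convex quadratic projection identify $\bar x$ as the orthogonal projection of $x$ onto the affine subspace cut out by some subset of the constraints active at $\bar x$, and by a Carath\'eodory argument this subset may be taken of cardinality at most $d$; there are only finitely many such subsystems, for each the displacement $\|x-\bar x\|$ is bounded by the norm of a fixed pseudoinverse times the residual of that subsystem at $x$, and taking the maximum of these finitely many constants yields a uniform $\theta$. An alternative that avoids citing Hoffman is an induction on $d$: for $x\in\calP$ written as $x=x^\star+tv$ with $x^\star\in\calQ$, the ratio ${\tt dist}(x,\calQ)/{\tt dist}(x,\calH)$ equals ${\tt dist}(v,C\cap\calH)/{\tt dist}(v,\calH)$ for the tangent cone $C$ of $\calP$ at $x^\star$, and by $0$-homogeneity this is in turn governed by a cross-sectional polytope of $C$ lying in $\mathbb R^{d-1}$, to which the inductive hypothesis applies; covering the compact set $\calQ$ by finitely many such neighbourhoods, together with the trivial bound ${\tt dist}(x,\calQ)\le{\tt diam}(\calP)$ on the region where ${\tt dist}(x,\calH)$ is bounded below, completes the argument. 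In that route the delicate step is checking that the distance to $C\cap\calH$ near $x^\star$ is comparable to the distance measured inside the cross-section, which is handled by radially rescaling the nearest point onto the cross-sectional hyperplane.
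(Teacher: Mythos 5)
Your reduction is correct: writing $\calP=\{x:Ax\le b\}$ and $\calH=\ker N$, the elementary inequality $\|Nx\|\le\|N\|\,{\tt dist}(x,\calH)$ combined with Hoffman's error bound for the nonempty polyhedron $\{x:Ax\le b,\ Nx=0\}$ gives exactly \eqref{eq:GALSKAS} with $\beta=\theta\,\|N\|$, and your sketch of the standard proof of Hoffman's bound (projection onto $\calQ$, normal-cone/KKT representation by active constraints, Carath\'eodory selection of a linearly independent subsystem, maximum over finitely many pseudoinverse norms) is the usual and workable one. This is, however, a genuinely different route from the paper's. The paper never passes to an inequality description and never invokes an error-bound theorem; it studies the ratio $\beta(x)={\tt dist}(x,\calQ)/{\tt dist}(x,\calH)$ directly, stratifying $\calP$ by pairs of faces $(\calA,\calB)$ of $\calP$ and $\calQ$ (with $\calB$ recording the face of $\calQ$ onto which $x$ projects), proving finiteness of $\sup\beta$ on ``$\calQ$-nonadjacent'' pairs by a compactness/limit-point argument, and handling ``$\calQ$-adjacent'' pairs by a homogeneity lemma showing $\beta$ is constant along rays from a common point of $\calA\cap\calB^+\cap\calB\cap\calH$, which pushes the supremum to a pair of strictly smaller total dimension; an induction on $\dim\calA+\dim\calB$ then finishes. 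In effect the paper reproves the relevant special case of Hoffman's bound from scratch. Your argument is much shorter and yields a constant depending only on the matrices $(A,N)$, so it extends verbatim to unbounded polyhedra and is uniform in the right-hand sides, at the cost of importing a nontrivial classical theorem (or proving it, as in your sketch); the paper's argument is longer but self-contained, using only elementary facts about faces, projections and compactness of the polytope.
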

This would easily imply Theorem~\ref{th:LinearConstraint}(b). Indeed,
let ${\cal H}=\{x\in\mathbb R^d\::\:Ax-b=0\}$, then for any
$x\in\calP$ we have $||x-x_{\cal H}||\le
\frac{||Ax-b||}{\sigma_{\min}(A)}$, where $\sigma_{\min}(A)$ denotes
the smallest singular value of $A$. This easily follows from the
projection formula $x_{\cal H} = x - A^*(AA^*)^{-1}(Ax-b)$ and the
fact that any $A$ can be factorized as $A=U\Sigma V^*$ so that
$A^*(AA^*)^{-1} = V\Sigma^{-1}U^*$ and $\norm{V\Sigma^{-1}U^*} =
\sigma^{-1}_{\min}(A)$. It follows from
Theorem~\ref{th:polytopeAngle} that $||\tilde x-x||\le
\frac{\beta}{\sigma_{\min}(A)}||Ax-b||$ where $\tilde
x=x_{\calP\cap{\cal H}}$.  Since $\tilde x\in\calP\cap\cal H$, we have
$f_\calP(\tilde x)\ge f_\calP(x^\star)$.  Convexity of $f_\calP$ gives
$$
f_\calP(x)
\ge f_\calP(\tilde x)+\langle \nabla f(\tilde x),x-\tilde x\rangle
\ge f_\calP(x^\star)-||\nabla f(\tilde x)||\cdot ||\tilde x-x|| 
 \ge f_\calP(x^\star)-||\nabla f(\tilde x)||\cdot \frac{\beta}{\sigma_{\min}(A)}||Ax-b||
$$
which yields Theorem~\ref{th:LinearConstraint}(b), since 
 $||\nabla f(\tilde x)||$ is bounded on $\calP$.

In the remainder of this section we prove Theorem~\ref{th:polytopeAngle}. 
We need to show that $\sup_{x\in\calP-\calH}\beta(x)<+\infty$ where we defined
$\beta(x)=\frac{{\tt dist}(x,\calQ)}{{\tt dist}(x,\calH)}$.
(Note that ${\tt dist}(x,\calH)>0$ for $x\in\calP-\calH$, and ${\tt dist}(x,\calH)={\tt dist}(x,\calQ)=0$ for $x\in\calP\cap\calH=\calQ$).

Clearly,  set $\calQ=\calP\cap \calH$ is a polytope.
Let $\mathbb F_\calP,\mathbb F_\calQ$ be the sets of faces of $\calP,\calQ$ respectively.
For a pair of faces $(\calA,\calB)\in\mathbb F_\calP\times \mathbb F_\calQ$ define 
\begin{eqnarray}
{\calB^+_\circ} &=& \{ x\in\calP \:|\: \mbox{$\calB$ is the minimal face of $\calQ$ containing $x_\calQ$} \} \\
{\calB^+} &=& \{x\in\calP\:|\:x_\calQ=x_\calB=x_{{\tt affine}(\calB)} \in\calB\} \\
\beta(\calA,\calB) &=& \sup_{x\in(\calA\cap\calB^+)-\calH} \beta(x) \label{eq:betaAB} 
\end{eqnarray}
where ${\tt affine}(\calB)$ is the affine hull of $\calB$, and we assume that $\sup\varnothing=-\infty$.
Denote 
$$
\mathbb F_{\calP\calQ}=\{(\calA,\calB)\in\mathbb F_\calP\times\mathbb F_\calQ\:|\:(\calA\cap\calB^+)-\calH\ne\varnothing\}
$$
We also define $\dim(\calA,\calB)=\dim \calA+\dim \calB$.
\begin{lemma}\label{lemma:GLAJSFALSGHASFH}
For each $\calB\in\mathbb F_{\calQ}$  set $\calB^+$ are compact, and $\calB^+_\circ\subseteq \calB^+$.
\end{lemma}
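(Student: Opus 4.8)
The plan is to establish the two assertions separately, both by elementary facts about metric projections onto closed convex sets (recall that such projections exist, are unique, and are nonexpansive, hence continuous).

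For the compactness of $\calB^+$, I would argue as follows. Since $\calB^+\subseteq\calP$ and the polytope $\calP$ is bounded, $\calB^+$ is bounded, so it remains to check that it is closed. I would first note that the defining condition $x_\calQ=x_\calB=x_{{\tt affine}(\calB)}\in\calB$ is equivalent to the two equalities $x_\calQ=x_\calB$ and $x_\calB=x_{{\tt affine}(\calB)}$: the membership $x_{{\tt affine}(\calB)}\in\calB$ is automatic once the second equality holds (because $x_\calB\in\calB$ always), and conversely, if the affine-hull projection lands in $\calB\subseteq{\tt affine}(\calB)$ then it is also the nearest point of $\calB$ to $x$, so the second equality holds. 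Now the maps $x\mapsto x_\calQ$, $x\mapsto x_\calB$, $x\mapsto x_{{\tt affine}(\calB)}$ are continuous, so $\{x:x_\calQ=x_\calB\}$ and $\{x:x_\calB=x_{{\tt affine}(\calB)}\}$ are closed subsets of $\mathbb R^d$; intersecting them with the closed set $\calP$ exhibits $\calB^+$ as closed, hence compact.

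For the inclusion $\calB^+_\circ\subseteq\calB^+$, take $x\in\calB^+_\circ$, so $x\in\calP$ and $\calB$ is the minimal face of $\calQ$ containing $x_\calQ$, which is equivalent to $x_\calQ\in{\tt relint}(\calB)$. I would proceed in two steps. First, $x_\calB=x_\calQ$: since $\calB\subseteq\calQ$ and $x_\calQ\in\calB$, the point $x_\calQ$ already realizes ${\tt dist}(x,\calB)$, so by uniqueness of the projection it equals $x_\calB$. Second, $x_{{\tt affine}(\calB)}=x_\calB$: the optimality condition for the projection onto $\calB$ gives $\langle x-x_\calB,\,z-x_\calB\rangle\le 0$ for all $z\in\calB$; since $x_\calB=x_\calQ\in{\tt relint}(\calB)$, for every direction $v$ of ${\tt affine}(\calB)$ both $x_\calB+\varepsilon v$ and $x_\calB-\varepsilon v$ lie in $\calB$ for small $\varepsilon>0$, so the inequality forces $\langle x-x_\calB,v\rangle=0$ for all such $v$; this is exactly the optimality condition characterizing the orthogonal projection of $x$ onto the affine subspace ${\tt affine}(\calB)$, whence $x_{{\tt affine}(\calB)}=x_\calB\in\calB$. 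Chaining the two steps yields $x_\calQ=x_\calB=x_{{\tt affine}(\calB)}\in\calB$, i.e.\ $x\in\calB^+$.

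None of this is deep. The one step that needs genuine care — and the only place the relative-interior hypothesis in the definition of $\calB^+_\circ$ is used — is upgrading the variational inequality for the projection onto $\calB$ to an equality valid on the whole direction space of ${\tt affine}(\calB)$; I would spell out explicitly that a relative-interior point of $\calB$ can be perturbed in both the $+v$ and $-v$ directions within $\calB$, which is what makes the two one-sided inequalities combine into orthogonality.
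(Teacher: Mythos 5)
Your proof is correct and follows essentially the same route as the paper's: compactness via continuity of the projection maps together with boundedness/closedness inside $\calP$, and the inclusion $\calB^+_\circ\subseteq\calB^+$ via the observation that minimality of the face $\calB$ places $x_\calQ$ in its relative interior, which forces the projections onto $\calB$ and onto ${\tt affine}(\calB)$ to coincide. You merely spell out (with the variational inequality for the projection) the step the paper states as ``and thus $x_{{\tt affine}(\calB)}=x_\calB$,'' which is a welcome but not substantively different elaboration.
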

\begin{proof}
The first claim follows from continuity of projections and compactness of faces.
To show the second claim, consider $x\in\calB^+_\circ$. 
Clearly, $x_\calB=x_\calQ$.
Also, $x_\calB$ belongs to the interior of $\calB$ (and thus $x_{{\tt affine}(\calB)}=x_\calB$), otherwise  there would exist a face $\calB'\subsetneq\calB$ of $\calQ$ containing
$x_\calB$, contradicting minimality of $\calB$.
\end{proof}

From this lemma we can conclude that $\calP-\calH=\bigcup_{(\calA,\calB)\in\mathbb F_{\calP\calQ}}((\calA\cap\calB^+)-\calH)$.
(Indeed, for each $x\in\calP-\calH$ pick arbitrary face $\calA\in\mathbb F_\calP$ containing $x$, and minimal face $\calB\in\mathbb F_\calQ$ containing $x_Q$.
Then $x\in \calA$, $x\in \calB_\circ^+\subseteq \calB^+$ and so $x\in(\calA\cap\calB^+)-\calH$, implying that $(\calA,\calB)\in\mathbb F_{\calP\calQ}$).
It now suffices to show that
$\beta(\calA,\calB)<+\infty$ for all $(\calA,\calB)\in\mathbb F_{\calP\calQ}$.
Pair $(\calA,\calB)\in\mathbb F_{\calP\calQ}$ 
 will be called {\em $\calQ$-nonadjacent} if $\calA\cap\calB^+\cap \calB\cap\calH$ is empty, 
 and {\em $\calQ$-adjacent} otherwise. It is easy to show the boundedness of $\beta(\calA,\calB)$
 for pairs of the former type.

\begin{lemma}\label{lemma:nonadjacent}
If $(\calA,\calB)$ is $\calQ$-nonadjacent then $\beta(\calA,\calB)<+\infty$. 
\end{lemma}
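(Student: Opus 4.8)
The plan is to reduce the claim to compactness. First I would note that the set $K:=\calA\cap\calB^+$ is compact: $\calA$ is a face of the polytope $\calP$ and hence compact, while $\calB^+$ is compact by Lemma~\ref{lemma:GLAJSFALSGHASFH}, and an intersection of compact sets is compact. On $K$ the two maps $x\mapsto{\tt dist}(x,\calQ)$ and $x\mapsto{\tt dist}(x,\calH)$ are $1$-Lipschitz, hence continuous, so the supremum defining $\beta(\calA,\calB)=\sup_{x\in(\calA\cap\calB^+)-\calH}\beta(x)$ can fail to be finite only if ${\tt dist}(\cdot,\calH)$ gets arbitrarily close to zero on $(\calA\cap\calB^+)-\calH$, i.e.\ only if $K$ touches $\calH$.

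The key step is to use the $\calQ$-nonadjacency hypothesis to rule this out, i.e.\ to show $K\cap\calH=\varnothing$. Take any $x\in K\cap\calH$. Since $\calA\subseteq\calP$, we have $x\in\calP\cap\calH=\calQ$, so the projection of $x$ onto $\calQ$ is $x$ itself. But $x\in\calB^+$ forces $x_\calB=x_\calQ=x$, and $x_\calB=x$ means precisely that $x\in\calB$. Hence $x\in\calA\cap\calB^+\cap\calB\cap\calH$, which is empty by hypothesis; this contradiction gives $K\cap\calH=\varnothing$, and in particular $(\calA\cap\calB^+)-\calH=K$.

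To finish: since $\calH$ is closed and $K$ is compact with $K\cap\calH=\varnothing$, the continuous function ${\tt dist}(\cdot,\calH)$ attains a strictly positive minimum $m>0$ on $K$, while ${\tt dist}(\cdot,\calQ)$ attains a finite maximum $M$ on $K$. Therefore $\beta(x)={\tt dist}(x,\calQ)/{\tt dist}(x,\calH)\le M/m$ for every $x\in K$, so $\beta(\calA,\calB)\le M/m<+\infty$ (the inequality being vacuous if $K=\varnothing$, in which case $\beta(\calA,\calB)=\sup\varnothing=-\infty$).

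There is no real obstacle here; the only place that needs a moment of care is the set-theoretic inclusion $K\cap\calH\subseteq\calA\cap\calB^+\cap\calB\cap\calH$, where one must observe that a point of $\calB^+$ that already lies in $\calQ$ must in fact lie in $\calB$, because its nearest point in $\calQ$ equals its nearest point in $\calB$ and equals the point itself. Everything else is continuity of distance functions together with compactness of $\calA$ and $\calB^+$.
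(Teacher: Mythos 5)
Your proof is correct and rests on the same two ingredients as the paper's own argument: compactness of $\calA\cap\calB^+$ (via Lemma~\ref{lemma:GLAJSFALSGHASFH}) together with the observation that a point of $\calB^+$ lying in $\calH$ (hence in $\calQ=\calP\cap\calH$) must lie in $\calB$, which would contradict $\calQ$-nonadjacency. The paper packages this as a contrapositive, extracting a sequence with ${\tt dist}(x^i,\calH)\to 0$ and showing a limit point witnesses adjacency, whereas you argue directly that $\calA\cap\calB^+$ is disjoint from $\calH$ and then bound $\beta$ by $M/m$ via the extreme value theorem; the mathematical content is essentially identical.
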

\begin{proof}
We will prove the contrapositive statement.
Suppose that $\beta(\calA,\calB)=+\infty$, then set $\calA\cap\calB^+$ contains a sequence $x^1,x^2,\ldots$ with $\lim_{i\rightarrow \infty}\beta(x^i)=+\infty$.
Since values ${\tt dist}(x^i,\calQ)$ are upper-bounded, we must have $\lim_{i\rightarrow \infty}{\tt dist}(x^i,\calH)=0$.
Let $y$ be a limit point of sequence $\{x^i\}$, then ${\tt dist}(y,\calH)=0$
and  $y\in \calA\cap\calB^+$ by compactness of $\calA$ and $\calB^+$.

We have $y\in\calQ=\calP\cap\calH$, and so $y_\calQ=y$.
Since $y\in\calB^+$, we also have $y_\calQ\in\calB$, and so $y\in\calB$.
We showed that $y\in\calA\cap\calB^+\cap \calB\cap\calH$, and so $(\calA,\calB)$ is $\calQ$-adjacent.
\end{proof}

Next, we analyze $\calQ$-adjacent pairs.

\begin{lemma}\label{lemma:ProjectionIsLinear}
Consider a $\calQ$-adjacent pair $(\calA,\calB)$ and points $x\in(\calA\cap\calB^+)-\calH$ and $y\in\calA\cap\calB^+\cap \calB\cap\calH$.
For a value $\lambda\ge 0$ define $x{(\lambda)}=\lambda x + (1-\lambda) y$. \\
(a)
For every linear subspace $\calI$ containing $y$
there exists constant $\tau>0$ (that depends on $x,y,\calI$) such that ${\tt dist}(x(\lambda),\calI)=\tau\lambda$ for all $\lambda\ge 0$. \\
(b) If $\lambda>0$ and $x(\lambda)\in\calA\cap\calB^+$ then $x(\lambda)\notin\calH$ and $\beta(x(\lambda))=\beta(x)$.
\end{lemma}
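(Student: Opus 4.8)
The plan is to reduce both parts to one elementary fact: the distance from the moving point $x(\lambda)=y+\lambda(x-y)$ to any affine subspace \emph{that passes through the base point $y$} is an exactly linear function of $\lambda\ge 0$.

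First I would prove (a). Let $P_\calI$ denote orthogonal projection onto the linear subspace $\calI$, so that ${\tt dist}(z,\calI)=\|(I-P_\calI)z\|$ for every $z$. Since $y\in\calI$ we have $P_\calI y=y$, hence $(I-P_\calI)(x-y)=(I-P_\calI)x$, and by linearity $(I-P_\calI)x(\lambda)=\lambda\,(I-P_\calI)(x-y)$. Taking norms gives ${\tt dist}(x(\lambda),\calI)=\lambda\,{\tt dist}(x,\calI)$ for all $\lambda\ge 0$, which is the claim with $\tau={\tt dist}(x,\calI)$; and $\tau>0$ precisely because $x\notin\calI$, which is the relevant case (in particular for $\calI=\calH$ this is exactly the standing hypothesis $x\in\calP-\calH$).

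Then I would handle (b). Since $x(\lambda)\in\calA\subseteq\calP$, it is enough to show $x(\lambda)\notin\calH$ and to evaluate $\beta(x(\lambda))={\tt dist}(x(\lambda),\calQ)/{\tt dist}(x(\lambda),\calH)$. For the denominator, part (a) with $\calI=\calH$ (a linear subspace containing $y$, with $x\notin\calH$) gives ${\tt dist}(x(\lambda),\calH)=\lambda\,{\tt dist}(x,\calH)$, which is positive for $\lambda>0$ and thus simultaneously certifies $x(\lambda)\notin\calH$ and computes the denominator. For the numerator I would invoke the definition of $\calB^+$: every $z\in\calB^+$ satisfies $z_\calQ=z_{{\tt affine}(\calB)}\in\calB$, so ${\tt dist}(z,\calQ)={\tt dist}(z,{\tt affine}(\calB))$. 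Applying this for $z=x$ (valid by hypothesis, $x\in\calB^+$) and for $z=x(\lambda)$ (valid by the assumption of (b)), and noting that ${\tt affine}(\calB)$ is an affine subspace through $y$ since $y\in\calB$, the part-(a) computation applied verbatim to ${\tt affine}(\calB)$ yields ${\tt dist}(x(\lambda),\calQ)=\lambda\,{\tt dist}(x,\calQ)$. Dividing the two identities gives $\beta(x(\lambda))=\beta(x)$.

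The hard part is the numerator in (b): the distance to the polytope $\calQ$ is not in general linear along such a ray, so it cannot be scaled directly. What rescues the argument is that the membership $z\in\calB^+$ literally asserts that the nearest point of $\calQ$ to $z$ lies on $\calB$ and coincides with the nearest point of the affine hull ${\tt affine}(\calB)$; using this for $z=x$ and $z=x(\lambda)$ converts ${\tt dist}(\cdot,\calQ)$ into ${\tt dist}(\cdot,{\tt affine}(\calB))$, which \emph{is} linear in $\lambda$. A minor point I would also be careful about is the strict positivity $\tau>0$ in (a): it is equivalent to $x\notin\calI$, which holds in both uses — for $\calI=\calH$ by the hypothesis $x\in\calP-\calH$, and for $\calI={\tt affine}(\calB)$ because $x\in{\tt affine}(\calB)$ would give $x=x_{{\tt affine}(\calB)}=x_\calQ\in\calQ\subseteq\calH$, contradicting $x\notin\calH$.
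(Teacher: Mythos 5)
Your proof is correct and follows essentially the same route as the paper: part (a) is the elementary fact that distance from $x(\lambda)=y+\lambda(x-y)$ to a (linear or affine) subspace through $y$ scales linearly in $\lambda$ (you phrase it via the projection operator, the paper via a coordinate change), and part (b) uses the definition of $\calB^+$ to replace ${\tt dist}(\cdot,\calQ)$ by ${\tt dist}(\cdot,{\tt affine}(\calB))$ and then applies (a) to $\calI={\tt affine}(\calB)$ and $\calI=\calH$. Your explicit check that $\tau>0$ in the two instances used (in particular that $x\in{\tt affine}(\calB)$ would force $x\in\calQ\subseteq\calH$) is a welcome bit of extra care that the paper leaves implicit.
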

\begin{proof}
\myparagraph{(a)}
By applying translations and orthogonal rotations we can assume w.l.o.g.\ that $\calI=\{z\in\mathbb R^d\:|\:z_1=z_2=\ldots=z_m=0\}$ for some $m\le d$,
and $y=(0,0,\ldots,0)$. (Such transformations do not affect the claim of the lemma).
We have $x(\lambda)=\lambda\cdot x$ and ${\tt dist}(x(\lambda),\calI)=\lambda\cdot(x_1^2+x_2^2+\ldots+x_m^2)^{1/2}$ for any $\lambda\ge 0$.
This implies the claim.

\myparagraph{(b)}
Let $\tau_1$ and $\tau_2$ be the values from part (a) applied respectively to linear subspaces $\calI={\tt affine}(\calB)$ and $\calI=\calH$.
For any $\lambda>0$ with $x(\lambda)\in\calA\cap\calB^+$ (and in particular for $\lambda=1$)
we have ${\tt dist}(x(\lambda),\calQ)={\tt dist}(x(\lambda),{\tt affine}(\calB))=\tau_1 \lambda$
and ${\tt dist}(x(\lambda),\calH)=\tau_2 \lambda$. The claim follows.
\end{proof}

\begin{lemma}\label{lemma:LASJFALSF}
For every $\calQ$-adjacent pair $(\calA,\calB)$ there exists pair $(\calA',\calB')\in\mathbb F_{\calP\calQ}$
with $\beta(\calA',\calB')\ge \beta(\calA,\calB)$ and $\dim(\calA',\calB')<\dim(\calA,\calB)$.
\end{lemma}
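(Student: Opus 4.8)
The plan is to use the ``anchor point'' $y\in\calA\cap\calB^+\cap\calB\cap\calH$ provided by $\calQ$-adjacency and to slide $x$ away from $y$ along the ray $x(\lambda)=\lambda x+(1-\lambda)y$ until it is about to leave $\calA\cap\calB^+$; Lemma~\ref{lemma:ProjectionIsLinear}(b) keeps $\beta$ constant along this slide, while the stopping point will be forced either to sit on a proper face of $\calA$ or to project into a proper face of $\calB$ --- precisely the dimension drop we want. A pigeonhole argument over a maximizing sequence then turns the pointwise equality $\beta(\text{stopping point})=\beta(x)$ into the claimed inequality for $\beta(\calA,\calB)$.

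The one calculation I need is the following. Write $\pi$ for the orthogonal projection onto ${\tt affine}(\calB)$; since $y\in\calB\subseteq{\tt affine}(\calB)$ this map is affine with $\pi(y)=y$. For $x\in\calA\cap\calB^+$ put $p=\pi(x)\in\calB$ and $p(\lambda)=\pi(x(\lambda))=y+\lambda(p-y)$. I claim that $x(\lambda)\in\calB^+$ for every $\lambda\ge0$ with $p(\lambda)\in\calB$. Indeed $x(\lambda)-p(\lambda)=\lambda(x-p)$, and for any $q\in\calQ$
\[
\langle x(\lambda)-p(\lambda),\,q-p(\lambda)\rangle=\lambda\langle x-p,\,q-p\rangle\le0,
\]
where the first equality uses $x-p\perp({\tt affine}(\calB)-y)\ni p-p(\lambda)$ and the inequality uses $p=x_\calQ$ (because $x\in\calB^+$). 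Hence $p(\lambda)=x(\lambda)_\calQ$; since $p(\lambda)\in\calB\subseteq\calQ$ this also gives $p(\lambda)=x(\lambda)_\calB=x(\lambda)_{{\tt affine}(\calB)}\in\calB$, i.e.\ $x(\lambda)\in\calB^+$. The point is that the defining conditions of $\calB^+$ other than ``$\pi(x)\in\calB$'' are invariant under scaling about $y$.

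Now fix $x\in(\calA\cap\calB^+)-\calH$. The ray lies in ${\tt affine}(\calA)$ (both $x,y\in\calA$), and $\{\lambda\ge1:x(\lambda)\in\calA\cap\calB^+\}$ is nonempty, closed and bounded (compactness of $\calA\cap\calB^+$ by Lemma~\ref{lemma:GLAJSFALSGHASFH}, and $x\ne y$); let $\bar\lambda\ge1$ be its maximum and $z=x(\bar\lambda)\in\calA\cap\calB^+$. By Lemma~\ref{lemma:ProjectionIsLinear}(b), $z\notin\calH$ and $\beta(z)=\beta(x)$. Moreover $z\notin\mathrm{relint}(\calA)$ or $\pi(z)\notin\mathrm{relint}(\calB)$: if both were false, then for small $\varepsilon>0$ we would have $x(\bar\lambda+\varepsilon)\in\mathrm{relint}(\calA)$ and $p(\bar\lambda+\varepsilon)\in\mathrm{relint}(\calB)$ by relative openness, hence $x(\bar\lambda+\varepsilon)\in\calA\cap\calB^+$ by the claim above, contradicting maximality of $\bar\lambda$. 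If $z\notin\mathrm{relint}(\calA)$, choose a proper face $\calA'\subsetneq\calA$ (a face of $\calP$) with $z\in\calA'$ and take the pair $(\calA',\calB)$. Otherwise $z_\calQ=\pi(z)$ (as $z\in\calB^+$) lies in a proper face $\calB'\subsetneq\calB$ (a face of $\calQ$); since $z-z_\calQ\perp({\tt affine}(\calB)-y)\supseteq({\tt affine}(\calB')-y)$ and $z_\calQ\in\calB'\subseteq\calQ$, one gets $z_\calQ=\pi_{{\tt affine}(\calB')}(z)=z_{\calB'}$, so $z\in(\calB')^+$; take the pair $(\calA,\calB')$. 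In both cases the resulting pair lies in $\mathbb F_{\calP\calQ}$ (witnessed by $z$), has strictly smaller value of $\dim(\cdot,\cdot)$, and satisfies $\beta(\cdot,\cdot)\ge\beta(z)=\beta(x)$.

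To finish, apply this to a sequence $x^{(m)}\in(\calA\cap\calB^+)-\calH$ with $\beta(x^{(m)})\to\beta(\calA,\calB)$ (allowing $+\infty$). Each $x^{(m)}$ yields a pair of strictly smaller dimension with $\beta$-value $\ge\beta(x^{(m)})$; since there are only finitely many pairs in $\mathbb F_\calP\times\mathbb F_\calQ$, one of them, say $(\calA',\calB')$, occurs for infinitely many $m$, so $\beta(\calA',\calB')\ge\beta(x^{(m)})$ for infinitely many $m$ and hence $\beta(\calA',\calB')\ge\beta(\calA,\calB)$, with $\dim(\calA',\calB')<\dim(\calA,\calB)$. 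The main obstacle is the invariance claim of the second paragraph --- isolating which of the constraints cutting out $\calB^+$ are scale-invariant about $y$ --- since it is exactly what guarantees that halting the slide lands us on a genuine face of $\calA$ or of $\calB$ rather than in the relative interior of both.
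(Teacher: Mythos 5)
Your proof is correct and follows essentially the same route as the paper's: slide $x$ along the ray through the anchor point $y$ (so that $\beta$ is preserved by Lemma~\ref{lemma:ProjectionIsLinear}(b)), stop at the maximal $\lambda$ with $x(\lambda)\in\calA\cap\calB^+$, and observe that the stopping point either lies in a proper face of $\calA$ or projects into a proper face of $\calB$, which yields the lower-dimensional pair in $\mathbb F_{\calP\calQ}$. Your explicit scale-invariance claim for the $\calB^+$-projection conditions and the final pigeonhole over the finitely many pairs (to pass from the pointwise bound $\beta(\cdot)\ge\beta(x)$ to the supremum $\beta(\calA,\calB)$) simply make rigorous steps that the paper's proof leaves implicit.
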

\begin{proof}
Consider $x\in(\calA\cap\calB^+)-\calH$. Using Lemma~\ref{lemma:ProjectionIsLinear}(b),
we conclude that there exists $\hat x\in(\calA\cap\calB^+)-\calH$ such that $\beta(\hat x)=\beta(x)$
and $\hat x$ lies on the boundary of $\calA\cap\calB^+$.
(To obtain such $\hat x$, we can take maximum value of $\hat \lambda$ with $x(\hat \lambda)\in\calA\cap\calB^+$, and set $\hat x=x(\hat \lambda)$;
the maximum exists since set $\calA\cap\calB^+$ is compact and $y\ne x$.)

It now suffices to show that $\hat x\in\calA'\cap(\calB')^+$ for some $(\calA',\calB')\in\mathbb F_\calP\times\mathbb F_\calQ$ 
with $\dim(\calA',\calB')<\dim(\calA,\calB)$. (This will imply that $(\calA',\calB')\in\mathbb F_{\calP\calQ}$ since $\hat x\notin\calH$).
Two cases are possible:
\begin{itemize}
\item $\hat x$ lies on the boundary of $\calA$. Then there exists face $\calA'\subsetneq\calA$ of $\calP$ containing 
 is the minimal face of $\calP$ containing $\hat x$. We then have $\hat x\in\calA'\cap\calB^+$, as desired.
\item $\hat x$ lies on the boundary of $\calB^+$. This means that $\hat x_\calQ=\hat x_\calB\in\calB$ lies on boundary of $\calB$.
Let $\calB'$ be the minimal face of $\calQ$ containing $\hat x_\calQ$; by the previous fact we must have $\calB'\subsetneq\calB$.
By definition, $\hat x\in(\calB')^+_\circ$, and so $\hat x\in(\calB')^+$ by Lemma~\ref{lemma:GLAJSFALSGHASFH}.
We then have $\hat x\in\calA\cap(\calB')^+$, as desired.
\end{itemize}
\end{proof}

By applying Lemma~\ref{lemma:LASJFALSF} repeatedly (with an induction on $\dim(\calA,\calB)$) we conclude that
for every $\calQ$-adjacent pair $(\calA,\calB)$ there exists $\calQ$-nonadjacent pair $(\calA',\calB')\in\mathbb F_{\calP\calQ}$
with $\beta(\calA',\calB')\ge \beta(\calA,\calB)$. Combining this fact with Lemma~\ref{lemma:nonadjacent} concludes the proof.

\subsection{Proof of Theorem~\ref{th:LinearConstraint}(c)}
Denote $r_n= ||Ax^e_n-b||$ and $\Delta_n=\max\{f_\calP(x^\star)-f_\calP(x^e_n),0\}$.
Since sequences $\{A_n\}$ and $\{B_n\}$ are bounded and $T_n=\Theta(n^2)$, by Theorem~\ref{th:LinearConstraint}(a)
there exist positive constants $P,Q$ such that 
 $r_n\le P\frac{\sqrt{\Delta_n}}{n} + \frac{Q}{n^2}$ for sufficiently large $n$'s,
 or equivalently $n^2r_n\le Pn\sqrt{\Delta_n}+Q$.
Also, we have $\Delta_n\le \beta r_n$ by Theorem~\ref{th:LinearConstraint}(b),
and so $n^2r_n\le Pn\sqrt{\beta r_n}+Q$.
Denote $z_n=n\sqrt{r_n}$, then
$z_n^2-P\sqrt{\beta}z_n-Q\le 0$.
This means that $z_n\le z^\ast$ where $z^\ast$ is the positive solution of quadratic equation $z^2-P\sqrt{\beta}z-Q= 0$.
We showed that $n\sqrt{r_n}=O(1)$, or equivalently $r_n=O(1/n^2)$.


\section{Properties of the Frank-Wolfe gap: proofs of Lemmas~\ref{lemma:FWgap1} and~\ref{lemma:FWgap2}}

\subsection{Proof of Lemma~\ref{lemma:FWgap1}}\label{sec:lemma:FWgap1:proof}
Let $x^\star\in\argmin_{x\in P}g(x)$. Since the gradient of $g$ is $L$-Lipschitz, for any $d$ with $\hat x+d\in \calP$ we have 
$$
g(x^\star)\le g(\hat x + d)\le g(\hat x)+\langle \nabla g(\hat x),d \rangle + \tfrac{L}2||d||^2
$$
Let us plug $d=\gamma (s-\hat x)$ where $\gamma\in[0,1]$ and  $s=\lmo_\calP(\nabla g(\hat x))$. We obtain
$$
-c=g(x^\star)-g(\hat x)
\le \gamma \langle\nabla g(\hat x),s-\hat x\rangle + \frac{\gamma^2}{2} L ||s-x||^2
\le - b \gamma + \frac{a\gamma^2}{2}  
$$
where we denoted $c=g_\calP^\downarrow(\hat x)$, $b=\langle\nabla g(\hat x),\hat x-s\rangle={\tt gap}^{\tt FW}(\hat x;g_\calP)$ and $a=LD^2\ge L ||s-x||^2$.
Optimizing the RHS in the last inequality over $\gamma$ gives
\begin{equation}\label{eq:FALSF}
c\ge
\max_{\gamma\in[0,1]} \left( -\frac{a\gamma^2}{2}  + b \gamma \right) = 
\begin{cases}
b-\frac{a}{2} & \mbox{if } b\ge a \quad\quad (\gamma=1) \\
\frac{b^2}{2a} & \mbox{if } b\le a \quad\quad (\gamma=\frac{b}{a}) 
\end{cases}
\end{equation}
To prove the second inequality in Lemma~\ref{lemma:FWgap1},
we need to show two claims:
\begin{itemize}
\item[(i)] If $c>a/2$ then $b\le c+a/2$. Suppose this is false: $c>a/2$ and $c<b-a/2$.
From~\eqref{eq:FALSF} we get $b<a$, and so $c<b-a/2<a-a/2=a/2$ - a contradiction.
\item[(ii)] If $c\le a/2$ then $b\le \sqrt{2ac}$. Suppose this is false: $c\le a/2$ and $c<b^2/2a$.
From~\eqref{eq:FALSF} we get $b>a$ and thus $c\ge b-a/2$, implying $c>a-a/2=a/2$ - a contradiction.
\end{itemize}

\subsection{Proof of Lemma~\ref{lemma:FWgap2}}

Let $\tilde \calL_{\gamma,\bar y}(x,y)=\langle Kx,
y\rangle+f(x)-h^\ast(y)-\frac{1}{2\gamma}||y-\bar y||^2$ be the
function that includes all terms of $\calL_{\gamma,\bar y}$ except for
$\delta_P(x)$, so that $\calL(x,y)=\tilde\calL(x,y)+\delta_P(x)$.
Denote $g(x)=\max_y \tilde \calL_{\gamma,\bar y}(x,y)$, $\hat
g(x)=\tilde \calL_{\gamma,\bar y}(x,\hat y)$ and $\hat
g_\calP(x)=g(x)+\delta_\calP(x)=\calL_{\gamma,\bar y}(x,\hat y)$.

By construction, we have $\calF_{\gamma,\bar y}(x)=g(x)+\delta_P(x)$
and $\hat g(x)\le g(x)$ for any $x$.  Furthermore, functions $g$ and
$\hat g$ are differentiable convex with a Lipschitz continuous gradient (since
the same holds for function $f(x)$ and also for function $\max_y[\langle Kx,y\rangle - h^\ast(y)-\frac{1}{2\gamma}||y-\bar y||^2]$ by Lemma~\ref{lemma:h-gamma}).

Since $\hat y=\argmax_y \tilde \calL_{\gamma,\bar y}(\hat x,y)$, we have $\hat g(\hat x)=\tilde\calL_{\gamma,\bar y}(\hat x,\hat y)=\max_y \tilde\calL_{\gamma,\bar y}(\hat x,\hat y)= g(\hat x)$.
Since $\hat g(\hat x)=g(\hat x)$ and $\hat g(x)\le g(x)$ for all $x\in\calX$, we obtain that $\nabla \hat g(\hat x)=\nabla g(\hat x)$.

Conditions $\hat g(\hat x)=g(\hat x)$ and $\nabla \hat g(\hat x)=\nabla g(\hat x)$
 mean that $\varepsilon={\tt
  gap}^{\tt FW}(\hat x;\calF_{\gamma,\bar y})={\tt gap}^{\tt FW}(\hat
x;\hat g_\calP)$.  Let us apply Lemma~\ref{lemma:FWgap1} to function
$\hat g_\calP(x)$.  We obtain $\hat g_\calP^\downarrow(\hat x)\le
\varepsilon$, or equivalently $\calL_{\gamma,\bar y}(\hat x,\hat y)\le
\varepsilon + \min_x \calL_{\gamma,\bar y}(x,\hat
y)=\varepsilon+H_{\gamma,\bar y}(\hat y)$.


\section{Properties of function $h_{\gamma,\bar y}(Kx)$: proofs of Lemma~\ref{lemma:h-gamma} and~\ref{lemma:constraint}}\label{sec:proof:last}
\subsection{Proof of Lemma~\ref{lemma:h-gamma}}

  We first show the effect on Moreau envelope of a function $f$ when
  adding a linear function. Let $f$ be a convex functions $f(x)$ and
  define $g(x) = f(x) + \scp{x}{b}$ for some $b \in \calX$. From the
  definition of the Moreau envelope and by completing the squares it
  directly follows that
  \begin{align}\label{eq:Moreau-fpluslin}
    m^\mu_{g}(x) = &\inf_{u} f(u) + \scp{u}{b} +
    \frac{1}{2\mu}\norm{x-u}^2\nonumber\\ = & \inf_{u} f(u) +
    \frac{1}{2\mu}\norm{(x-\mu b) - u}^2 + \scp{x}{b} -
    \frac{\mu}{2}\norm{b}^2\nonumber\\ = & m^\mu_f(x - \mu b) + \scp{x}{b} -
    \frac{\mu}{2}\norm{b}^2.
  \end{align}  
  Next, we rewrite $h_{\gamma, \bar y}(Kx)$ as
  \[
  h_{\gamma, \bar y}(Kx) = -\inf_{y\in\calY} - \scp{Kx}{y} + h^\ast(y) +
  \frac{1}{2\gamma}\norm{y-\bar y}^2,
  \]
  which is now in the form of (minus) an infimal convolution of the
  function $h^*(y)$ minus the linear function $\scp{Kx}{y}$. The
  application of~\eqref{eq:Moreau-fpluslin} immediately yields the
  first representation in terms of the Moreau envelope of $h^*$, that
  is
  \[
  h_{\gamma, \bar y}(Kx) = \frac{\gamma}{2}\norm{Kx}^2 + \scp{Kx}{\bar
    y} - m_{h^*}^\gamma(\bar y + \gamma Kx).
  \]
  The second representation in terms of the Moreau envelope of $h$ is
  obtained by applying the formula~\eqref{eq:mor-conj}.
  
  The gradient formula in terms of the proximal map of $h^*$ can be
  deduced from the gradient formula~\eqref{eq:mor-grad} applied to
  $m_{h^*}^\gamma(\bar y + \gamma Kx)$, which yields
  \[
  \nabla_x h_{\gamma,\bar y}(x) = \gamma K^*Kx + K^*\bar y -
  \gamma K^*\frac{1}{\gamma}\left(\bar y + \gamma Kx - \prox_{\gamma h^*}(\bar y +
  \gamma Kx)\right) = K^*\prox_{\gamma h^*}\left(\bar y + \gamma Kx\right),
  \]
  and Moreau's identity~\eqref{eq:mor-id} easily gives the second
  formula in terms of the proximal map of $h$.

  It remains to compute the Lipschitz constant, which by invoking the
  the nonexpansiveness~\eqref{eq:non-exp} of the proximal map shows
  that
  \begin{align*}
  \norm{K^*\prox_{\gamma h^*}\left(\bar y + \gamma Kx\right) -
    K^*\prox_{\gamma h^*}\left(\bar y + \gamma Ky\right)} \leq &\norm{K}
  \norm{\prox_{\gamma h^*}\left(\bar y + \gamma Kx\right) -
    \prox_{\gamma h^*}\left(\bar y + \gamma Ky\right)}\\
  \leq&\norm{K}\norm{(\bar y + \gamma Kx) - (\bar y + \gamma Ky)}\\
  \leq&\gamma\norm{K}^2 = \gamma L_K^2.
  \end{align*}

\subsection{Proof of Lemma~\ref{lemma:constraint}}
  The proximal map to the indicator function of the linear constraint
  $S = \{y \in \calY: Cy=d\}$ amounts to computing for a given point
  $z \in \calY$ the projection
  \[
  \prox_{\gamma h^*}(z) = \argmin_{y\in \calY} \frac12\norm{y-z}^2,
  \quad \text{s.t.} \; Cy=d.
  \]
  It is a well-known fact that such projection can be computed by
  \[
  \prox_{\gamma h^*}(z) = z + (CC^*)^{-1}(d-Cz),
  \]
  where we have assumed that the matrix $C$ has full row rank, that is
  there are no redundant or conflicting constraints.

  With this result in mind and by combining~\eqref{eq:h-mor}
  with~\eqref{eq:mor-env-prox}, we can easily obtain a closed form
  representation of the function $h_{\gamma, \bar y}(Kx)$, that is
  \begin{align*}
  h_{\gamma, \bar y}(Kx) = & \frac{\gamma}{2}\norm{Kx}^2 +
  \scp{Kx}{\bar y} - m_{h^*}^\gamma(\bar y + \gamma Kx)\\ = &
  \frac{\gamma}{2}\norm{Kx}^2 + \scp{Kx}{\bar y} -
  \delta_S(\prox_{\gamma h^*}(\bar y + \gamma Kx)) -
  \frac{1}{2\gamma}\norm{\bar y + \gamma Kx - \prox_{\gamma h^*}(\bar
    y + \gamma Kx)}^2\\ = & \frac{\gamma}{2}\norm{Kx}^2 +
  \scp{Kx}{\bar y} - \frac{1}{2\gamma}\norm{\bar y + \gamma Kx -
    \prox_{\gamma h^*}(\bar y + \gamma Kx)}^2,
  \end{align*}
  where in the last line we have used the fact that
  $\delta_S(\prox_{\gamma h^*}(z))=0$, for any $z \in
  \calY$. Obviously, $h_{\gamma, \bar y}(Kx)$ is a quadratic function
  and its (linear) gradient with respect to $x$ follows from the
  gradient formula~\eqref{eq:h-grad}, that is
  \[
  \nabla_x h_{\gamma, \bar y}(Kx) = K^*\prox_{\gamma h^*}(\bar y +
  \gamma Kx) = K^*\left(\bar y + \gamma Kx + (CC^*)^{-1}(d-C(\bar y +
  \gamma Kx))\right).
  \]

\bibliographystyle{plain}
\bibliography{FW}

\end{document}